\newcommand\R{\mathbb{R}}
\newtheorem{theorem}{Theorem}[section]
\newtheorem{proposition}[theorem]{Proposition}
\newtheorem{lemma}[theorem]{Lemma}
\newtheorem{definition}[theorem]{Definition}
\newtheorem{remark}[theorem]{Remark}
\title[ Hodge Projections and Bounded Harmonic Functions on Manifolds with Ends]{On the Interplay Between Hodge Projections and Bounded Harmonic Functions on Manifolds with Ends}
\begin{document}

\author{Dangyang He and  Adam Sikora}
\address{
Dangyang He, School of Mathematical and Physical Sciences, Macquarie University, NSW 2109, Australia}
\email{dangyang.he@hdr.mq.edu.au}
\address{
Adam Sikora, School of Mathematical and Physical Sciences, Macquarie University, NSW 2109, Australia}
\email{adam.sikora@mq.edu.au}

\begin{abstract}
We investigate the $L^p$-boundedness of the Hodge projection in the setting of manifolds with ends. We examine its relationship to the Riesz transform and the space of bounded harmonic functions. In particular, we explore how the 
$L^p$-boundedness of the Hodge projection is connected to the structure of
$L^2$ harmonic one-forms and, subsequently, to the space of bounded harmonic functions.
\end{abstract}

\maketitle

\section{Introduction}

Our primary objective is to determine the range of exponents \(p\) for which the Hodge 
projection is bounded on \(L^p\) spaces over manifolds with ends. The Hodge projection and 
the associated Hodge decomposition form a classical and important topic in harmonic and 
geometric analysis. Foundational contributions to this field were made by Hodge and de~Rham 
(see \cite{DeRham, Hodge}). For historical context and subsequent developments, we refer the 
reader to \cite{Carron1, hode}. For significant aspects of Hodge theory leading in directions 
not directly related to our study, see \cite{ABP,Simpson}. For additional background on 
related issues of Hodge theory, we refer to \cite{Carron1, Li2,Li1}.

Our focus lies on the analytic aspects of the theory, particularly those connected to studies 
of the Riesz transform. In this context, our interest is motivated by the relationship between 
the \(L^p\)-boundedness of the Hodge projection and the equivalence (or lack thereof) between 
the Riesz and reverse Riesz transforms, see \eqref{rt} and  \eqref{rrt} below, as investigated by Auscher and Coulhon in \cite{AC}.

Recall that the Laplace--Beltrami operator can be defined via the closure of the quadratic form
\[
\langle \Delta f, g \rangle 
= \int_M \Delta f(x)\, g(x)\, dx 
= \int_M \langle df, dg \rangle_g,
\]
for all \(f,g \in C_c^\infty(M)\), where \(d\) denotes the exterior derivative. From this 
definition it follows that
\begin{align}\label{rt2}
  \|df\|_{2} = \|\Delta^{1/2} f\|_{2}.
\end{align}

In \cite{Str}, Strichartz posed the problem of determining the range of \(L^p(M)\) spaces 
for which the Riesz transform
\[
R = d\, \Delta^{-1/2}
\]
is bounded. That is, for which \(p\) the operator norm
\[
\|d \Delta^{-1/2}\|_{p \to p} = C < \infty
\]
is finite, or equivalently for which \(p\) the inequality
\begin{align}\label{rt}
  \|df\|_{p} \leq C \|\Delta^{1/2} f\|_{p}
\end{align}
holds. The identity \eqref{rt2} shows immediately that \(R\) is bounded on \(L^2(M)\).

Finally, the gradient \(\nabla f\) of a function is defined by the relation
\[
Xf = \langle \nabla f, X \rangle
\]
for all smooth vector fields \(X\).

 On a Riemannian manifold \( M \), the gradient \( \nabla f \) can be identified with \( df \in T^*M \) via the canonical identification of \( TM \) and \( T^*M \). Then the Riesz transform can be also denoted by $ R = \nabla  \Delta^{-1/2} $. In the sequel we always identify the tangent space with the 1-forms family. 

\begin{itemize}
    \item A significant answer to Strichartz’s question was obtained by Coulhon and Duong in \cite{CoDu}. However, the aspects of the Riesz transform closer to our present study were obtained in works by Coulhon, Carron, Hassell, Nix, and the second-named author (see \cite{CCH, HNS, HS}). The reverse Riesz transform can be defined by the inequality
\end{itemize}
\begin{equation}\label{rrt}
    \|\Delta^{1/2} f\|_p \le C \|\nabla f\|_p.
\end{equation}
This inequality and its relation to the Hodge projection constitute the main rationale for the results investigated in \cite{AC}, which also form an important starting point for our study. The aspects of the reverse Riesz transform most closely connected to our work are discussed by the first-named author in \cite{He2}, and we explain these connections in Remark~\ref{klopot} below.
To discuss the Hodge projection notion in a more systematic way we consider again  the exterior derivative  \(d\)  and define \(\delta\) as its formal adjoint with respect to the \(L^2\)-inner product, so that 
\[
\int \langle d\alpha, \beta \rangle_g \, dx = \int \langle \alpha, \delta \beta \rangle_g \, dx,\quad \forall \alpha \in C_c^\infty(\Lambda^{l-1}(M)),\quad \beta \in C_c^\infty(\Lambda^l(M)),
\]
where $\langle \cdot, \cdot \rangle_g$ is the metric-induced inner product on forms.

Let $M$ be a smooth, complete, oriented Riemannian manifold (possibly with boundary, under suitable boundary conditions).  
On the Hilbert space of square-integrable differential $l$-forms $L^2(\Lambda^l(M))$, the Hodge decomposition theorem asserts that
\begin{align}\label{Hodge_decomposition}
    L^2(\Lambda^l(M)) \;=\; \overline{d C_c^\infty(\Lambda^{l-1}(M))} 
    \;\oplus\; \overline{\delta C_c^\infty(\Lambda^{l+1}(M))} 
    \;\oplus\; \mathcal{H}_{L^2}^l(M),
\end{align}
where the closure is taken in the $L^2$-topology, and 
\[
\mathcal{H}_{L^2}^l(M) \;=\; \ker d \cap \ker \delta
\]
denotes the space of $L^2$-harmonic $l$-forms, see \cite{Carron-noncompact, KK}. The Hodge–de Rham Laplacian acting on $l$-forms is defined by
\[
\Delta_l = d \delta + \delta d,
\]
while on $0$-forms we simply write $\Delta$ for the usual Laplace–Beltrami operator.
In terms of a quadratic form 
\[
\langle \Delta_l \eta , \zeta \rangle = \int_M  \langle \Delta_l \eta,  \zeta \rangle_g dx =\int_M \langle d\eta , d\zeta \rangle_g +
\int_M \langle \delta \eta , \delta \zeta \rangle_g
\]
for all \( \eta, \zeta \in C_c^\infty(\Lambda^{l}(M)) \).

Equivalently, as shown for instance in \cite{Carron-noncompact}, one has
\begin{align*}
    \mathcal{H}_{L^2}^l(M) 
    &= \{ \omega \in L^2(\Lambda^l(M)) : \Delta_l \omega = 0 \} \\
    &= \{ \omega \in L^2(\Lambda^l(M)) : d\omega = \delta \omega = 0 \}.
\end{align*}

The \emph{Hodge projections} arise naturally from \eqref{Hodge_decomposition}: 
\begin{itemize}
    \item The projection $d \Delta_{l-1}^{-1} \delta$ is the orthogonal projection onto the exact component,
    \[
        d \Delta_{l-1}^{-1} \delta: L^2(\Lambda^l(M)) \to \overline{d C_c^\infty(\Lambda^{l-1}(M))}.
    \]
    \item The projection $\delta \Delta_{l+1}^{-1} d$ is the orthogonal projection onto the coexact component,
    \[
        \delta \Delta_{l+1}^{-1} d: L^2(\Lambda^l(M)) \to \overline{\delta C_c^\infty(\Lambda^{l+1}(M))}.
    \]
    \item The projection $\mathcal{H}$ is the orthogonal projection onto the harmonic component,
    \[
        \mathcal{H}: L^2(\Lambda^l(M)) \to \mathcal{H}_{L^2}^l(M).
    \]
\end{itemize}
Thus, for every $\omega \in L^2(\Lambda^l(M))$,
\begin{align*}
    \omega \;=\; d \Delta_{l-1}^{-1} \delta \omega 
    \;+\; \delta \Delta_{l+1}^{-1} d \omega 
    \;+\; \mathcal{H}\omega. 
\end{align*}
In other terms, for every \(l\)-form \(\omega \in L^2(\Lambda^l(M))\), we have the decomposition
\[
\omega = \alpha + \beta + \mathcal{H}\omega,
\]
with \(\alpha\) exact, \(\beta\) coexact, and \(\mathcal{H}\omega\) the harmonic component.

\medskip
Next, observe that $\Delta_l$ is non-negative and self-adjoint. By the spectral theorem, there exists a projection-valued measure $E(\cdot)$ on $[0,\infty)$ such that
\[
    \Delta_l = \int_{[0,\infty)} \lambda \, dE(\lambda).
\]
The \emph{Hodge projection} $\mathcal{H}$ is then given by
\[
    \mathcal{H} \;=\; E(\{0\}),
\]
that is, the spectral projection of $\Delta_l$ corresponding to the eigenvalue $0$. Equivalently,
\[
    \mathcal{H} = \chi_{\{0\}}(\Delta_l),
\]
where $\chi_{\{0\}}$ denotes the indicator function of the singleton $\{0\}$.


One central topic related to our investigation is the characterization of the class of 
\(L^2\)-harmonic 1-forms (see, for example, \cite{Carron-noncompact} and \cite{GW}). 
In this manuscript, however, our focus lies more on the consequences and extended applications 
of understanding this class of \(L^2\)-harmonic 1-forms rather than on developing a precise 
description of it.

The notation \( L^2 \)-harmonic 1-forms seems to be very closely related to the space of bounded harmonic function. At least in the setting of manifolds with ends which we investigate here the connection can have format of equivalent description. Pivotal contribution to studies of bounded harmonic functions can be found in \cite{Li, LiWa}. For some interesting application of this concept, see \cite{DY} and \cite{presik}.

For 1-form family we can write  $L^2$-Hodge decomposition described above in the following form
\begin{align}
    L^2(\Lambda^1(M)) = \overline{d C_c^\infty(M)}^{L^2} \oplus \overline{\delta C_c^\infty(\Lambda^2(M))}^{L^2} \oplus \mathcal{H}_{L^2}^1(M),
\end{align}
where
\begin{align}
    \mathcal{H}_{L^2}^1(M) = \left\{ \alpha \in L^2(\Lambda^1(M)); d\alpha = \delta \alpha = 0   \right\}
\end{align}
refers to the space of $L^2$ harmonic 1-form on $M$.

In what follows, we denote the orthogonal projection onto exact part by $\mathcal{P} = d \Delta^{-1} \delta$, the projection onto co-exact part by $\delta \Delta_2^{-1}d$, and the projection onto harmonic part by $\mathcal{H}$. That is
\begin{align*}
    I = d \Delta^{-1} \delta +\mathcal{H} + \delta \Delta_2^{-1} d.
\end{align*}

To state our results, we first recall the notion of a connected sum of smooth manifolds in a precise form. For further details on this construction, see \cite{GS,GIS}, and for additional context and motivation, we refer to \cite{Davies_nongaussian}.

\begin{definition}\label{def:connected-sum}
Let $\mathcal{V}_1, \ldots, \mathcal{V}_l$ be complete, connected, non-compact Riemannian manifolds of the same dimension.  
A Riemannian manifold $\mathcal{V}$ is called a \emph{connected sum} of $\mathcal{V}_1, \ldots, \mathcal{V}_l$, written
\[
\mathcal{V} \;=\; \mathcal{V}_1 \# \mathcal{V}_2 \# \cdots \# \mathcal{V}_l,
\]
if there exists a compact subset $K \subset \mathcal{V}$ such that the complement $\mathcal{V} \setminus K$ is the disjoint union of connected open subsets $\mathcal{U}_i \subset \mathcal{V}$, $i=1,\ldots,l$, where each $\mathcal{U}_i$ is isometric to $\mathcal{V}_i \setminus K_i$ for some compact subset $K_i \subset \mathcal{V}_i$.

The subsets $\mathcal{U}_i$ are called the \emph{ends} of $\mathcal{V}$.
\end{definition}






In our investigation we focus on the following three classes of manifolds with ends.
The first class is
\begin{align}\label{M1}\tag{$\textrm{M}_1$}
    M \;=\; (\mathbb{R}^{n_1} \times M_1) \# \cdots \# (\mathbb{R}^{n_l} \times M_l),
\end{align}
where $l \geq 2$ and each $M_i$ is a smooth compact Riemannian manifold. We assume
\[
n_* := \min_i n_i \geq 3, 
\qquad \text{and} \qquad 
n_i + \dim(M_i) = N \quad \text{for all } i.
\]
The boundedness of the Riesz transform on manifolds of this type was studied in \cite{CCH,HS}.

The second class is
\begin{align}\label{M2}\tag{$\textrm{M}_2$}
    M \;=\; (\mathbb{R}^2 \times M_1) \# (\mathbb{R}^{n_2} \times M_2),
\end{align}
where $n_2 \geq 3$, and
\[
n_2 + \dim(M_2) \;=\; 2 + \dim(M_1) \;=\; N.
\]
Here $M_1$ and $M_2$ are smooth compact Riemannian manifolds. The boundedness of the Riesz transform in this case was investigated in \cite{HNS}.

The third class is the connected sum
\begin{align}\label{M3}\tag{$\textrm{M}_3$}
    M \;=\; \mathbb{R}^2 \# \mathbb{R}^2.
\end{align}

Proofs of the above result, together with those of the subsequent theorems, are presented in the following sections. In the Introduction we confine ourselves to stating the main claims.

\begin{theorem}\label{thm1}
Let $M$ be a manifold with ends defined by \eqref{M1}. Then
\begin{align}\label{eq_main}
    \|\mathcal{P}\omega\|_p \le C \|(I-\mathcal{H})\omega \|_p,\quad \forall\, 1<p<n_*,
\end{align}
for all $\omega \in C_c^\infty(\Lambda^1(M))$.

In addition, the following endpoint estimate holds
\begin{align}
    \|\mathcal{P}\omega\|_{n_*} \le C \|(I-\mathcal{H})\omega \|_{(n_*,1)},
\end{align}
for all $\omega \in C_c^\infty(\Lambda^1(M))$.

\end{theorem}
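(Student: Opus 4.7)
The starting observation is that $\delta$ annihilates $L^2$-harmonic 1-forms, so $\mathcal{P}\mathcal{H} = d\Delta^{-1}\delta\mathcal{H} = 0$. Consequently $\mathcal{P}\omega = \mathcal{P}(I-\mathcal{H})\omega$, and the inequality \eqref{eq_main} is equivalent to the boundedness of $\mathcal{P}$ when restricted to the range of $I-\mathcal{H}$. Setting $\eta = (I-\mathcal{H})\omega$, it suffices to prove $\|\mathcal{P}\eta\|_p \le C\|\eta\|_p$ for all such $\eta$ and $1 < p < n_*$.

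Next, I would factor $\mathcal{P} = R R^*$ with $R = d\Delta^{-1/2}$ and $R^* = \Delta^{-1/2}\delta$. Since the Riesz transform $R$ is bounded on $L^p(M)$ for $1 < p < n_*$ by \cite{HS,CCH}, we have $\|\mathcal{P}\eta\|_p \le \|R\|_{p\to p}\,\|R^*\eta\|_p$. The problem therefore reduces to proving $\|R^*\eta\|_p \le C\|\eta\|_p$ for $\eta \in (I-\mathcal{H})(C_c^\infty(\Lambda^1(M)))$ and $1 < p < n_*$.

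For $p \in (n_*/(n_*-1),\, n_*)$ this bound is immediate by duality from the $L^{p'}$-boundedness of $R$. The delicate range is $1 < p \le n_*/(n_*-1)$, where $p' \ge n_*$ and $R$ itself fails to be bounded on $L^{p'}$; the obstruction is exactly the harmonic contribution at infinity on each Euclidean end. Exploiting the pairing $\langle R^*\eta, g\rangle = \langle \eta, Rg\rangle$ together with the orthogonality $\mathcal{H}\eta = 0$, one subtracts from $Rg$ the harmonic representative that realises this obstruction without altering the pairing. The resulting \emph{reduced} Riesz transform $\widetilde R g := Rg - \pi(Rg)$, with $\pi$ the projection onto the offending $L^2$-harmonic 1-forms, is expected to satisfy $\|\widetilde R g\|_{p'} \le C\|g\|_{p'}$ for the full range $p' \in (1,\infty)$, by adapting the end-by-end kernel analysis carried out in \cite{HS,CCH}.

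For the endpoint $p = n_*$, the same scheme is run at the level of Lorentz spaces: use the endpoint mapping $R : L^{(n_*,1)} \to L^{n_*}$ (obtained from the subcritical Riesz bounds by real interpolation, or directly from the endpoint analysis of \cite{HS}) combined with boundedness of $\widetilde R$ on the corresponding dual Lorentz class; their composition delivers the $L^{(n_*,1)} \to L^{n_*}$ estimate on the right-hand side of the stated inequality. The principal technical obstacle throughout is the rigorous construction and analysis of $\widetilde R$ in the small-$p$ regime: one must identify $\pi(Rg)$ precisely inside the $L^2$-harmonic subspace to which $\eta$ is orthogonal, and establish uniform kernel bounds across the different ends $\mathbb{R}^{n_i}\times M_i$ in a manner compatible with the Hodge decomposition.
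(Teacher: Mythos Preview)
Your heuristic is the right one --- the obstruction is a finite-rank harmonic piece, and orthogonality of $\eta=(I-\mathcal H)\omega$ to $\mathcal H^1_{L^2}(M)$ should kill it --- but the ``reduced Riesz transform'' step hides the entire difficulty. You write $\widetilde R g = Rg - \pi(Rg)$ with $\pi$ ``the projection onto the offending $L^2$-harmonic $1$-forms''. If $\pi=\mathcal H$ is the $L^2$-orthogonal projection, this does nothing: for $g\in C_c^\infty(M)$ the image $Rg=d\Delta^{-1/2}g$ lies in the exact component $\overline{dC_c^\infty}^{L^2}$, so $\mathcal H(Rg)=0$ and $\widetilde R=R$, which is unbounded for $p'\ge n_*$. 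What you actually need is a \emph{specific} finite-rank operator built from the leading asymptotics of the Riesz kernel (values in $\mathcal H^1_{L^2}(M)$ but not equal to $\mathcal H$), together with a proof that $R-\pi$ is bounded on $L^{p'}$ for all $p'>1$. That last statement is essentially equivalent to the theorem itself, and ``by adapting the end-by-end kernel analysis'' is not a proof --- it is exactly the work the paper carries out.

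The paper's route is different in execution. It never touches $\Delta^{-1/2}$ directly; instead it works with the resolvent perturbation $\mathcal P_k = d(\Delta+k^2)^{-1}\delta$, decomposes $(\Delta+k^2)^{-1}=\sum_{j=1}^4 G_j(k)$ via the low-energy parametrix of \cite{HS}, and analyses the bilinear form $\langle (I-\mathcal H)\omega,\mathcal P_k\nu\rangle$ in the limit $k\to 0$ (using that $\mathcal P_k\to\mathcal P$ strongly on $L^2$). The rank-one harmonic obstruction appears concretely as the pieces $\mathcal R_2,\mathcal Q_2$ of $d(G_1+G_3)\delta$, whose left factor is $d(\phi_i+u_i(\cdot,k))$; an integration by parts against $(I-\mathcal H)\omega$ together with $(\Delta+k^2)(\phi_i+u_i)=-k^2u_i$ forces these terms to vanish as $k\to 0$ (Lemma~\ref{lemma_1}). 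The remaining parametrix pieces ($\mathcal T_2,\mathcal T_4,\mathcal R_1,\mathcal Q_1$ and the $G_2,G_4$ contributions) are shown by direct kernel estimates to be bounded on $L^{p'}$ uniformly in $k$ for $p'>n_*'$ (Lemmata~\ref{lem3.4}--\ref{lemma_bad2}, \ref{lemma_G2}, \ref{lemma_G4_1}); duality then gives $1<p<n_*$. The endpoint $L^{(n_*,1)}\to L^{n_*}$ comes from weak-type $(n_*',n_*')$ bounds on $\mathcal T_4$ and $dG_4\delta$ and real interpolation, not from a separate Lorentz-space analysis of $R$.
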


In the above result, $\|\cdot\|_{(p,q)}$ refers to the standard Lorentz seminorm. We mention that the endpoint estimate may not be sharp, but since it follows directly from our proof without extra effort, we include it here for completeness.

\begin{remark}
Note that, by duality and Theorem~\ref{thm1}, for any \(\omega,\nu\in C_c^\infty(\Lambda^1(M))\) we obtain the bilinear estimate
\[
\bigl|\langle \mathcal{P}\omega,\nu\rangle\bigr|
= \bigl|\langle \mathcal{P}(I-\mathcal{H})^{2}\omega,\nu\rangle\bigr|
= \bigl|\langle (I-\mathcal{H})\omega,\mathcal{P}(I-\mathcal{H})\nu\rangle\bigr|
\le C\,\|(I-\mathcal{H})\omega\|_{p}\,\|(I-\mathcal{H})\nu\|_{p'},
\]
for all \(n_*'<p<\infty\).
However, since \((I-\mathcal{H})C_c^\infty(\Lambda^1(M))\) is not, in general, dense in \(L^{p'}(\Lambda^1(M))\), this duality argument alone does not yield \eqref{eq_main} for \(p\ge n_*\).
\end{remark}

The following observation is necessary for describing the $L^p$ boundedness of the Hodge projection. We expect that this result can be significantly strengthened, but a more detailed analysis will be presented in a forthcoming project.

\begin{theorem}\label{thm0}
Suppose that 
$M = \mathbb{R}^{n} \# \mathbb{R}^{n}$ for $n\ge 3$ or that $M$ is defined by \eqref{M1} and that its fundamental group $\pi_1(M)$ is trivial. Then
\[
\mathcal{H}_{L^2}^1(M) = \operatorname{span} \{d h_i\},
\]
where $h_i$ is the harmonic function equal to the probability of escaping to infinity through the end $E_i$.
Moreover 
$$
\|\mathcal{H}\|_{p \to p}\sim \|I-\mathcal{H}\|_{p \to p} < \infty 
$$
if and only if $n_*'<p < n_*$.
Now consider the case \eqref{M3}, that is, assume $M = \mathbb{R}^2 \# \mathbb{R}^2$. Then
\[
\mathcal{H}_{L^2}^1(M) = \{0\}.
\]
Moreover, if $M$ satisfies condition \eqref{M2} and its fundamental group $\pi_1(M)$ is trivial, then again
\[
\mathcal{H}_{L^2}^1(M) = \{0\}.
\]
\end{theorem}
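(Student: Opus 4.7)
The plan is to reduce the description of $\mathcal{H}_{L^2}^1(M)$ to the classical space of Dirichlet-finite harmonic functions, identify it via the Li--Tam escape probabilities, estimate the pointwise decay of $dh_i$ on each end, and then read off the range of $p$ for which the resulting finite-rank operator $\mathcal{H}$ is $L^p$-bounded. Under the hypothesis $\pi_1(M)=\{1\}$ (which is automatic for $\R^n\#\R^n$ when $n\ge 3$), $H^1_{dR}(M)=0$ and every closed smooth $1$-form on $M$ is exact; hence any $\omega\in\mathcal{H}_{L^2}^1(M)$ is of the form $\omega=dh$ for some smooth $h$, and the co-closed condition $\delta\omega=0$ translates into $\Delta h=0$. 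This yields the isomorphism
\[
\mathcal{H}_{L^2}^1(M)\;\cong\;\bigl\{h\in C^\infty(M):\Delta h=0,\ dh\in L^2\bigr\}\big/\R.
\]

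For case \eqref{M1}, each end $E_k\simeq(\R^{n_k}\times M_k)\setminus K_k$ with $n_k\ge 3$ is non-parabolic. A separation-of-variables argument on the compact factor $M_k$ gives the Green's function asymptotics $G\sim c_k\,r^{2-n_k}$ on $E_k$ (the zero mode in $M_k$ dominates while the others decay exponentially via Bessel-type estimates), so the escape probabilities satisfy $h_i-\delta_{ik}=O(r^{2-n_k})$ and $|dh_i|\lesssim r^{1-n_k}$ on $E_k$. This gives
\[
\int_{E_k}|dh_i|^2\,dx\;\lesssim\;\int_1^\infty r^{1-n_k}\,dr\;<\;\infty,
\]
so $dh_i\in L^2$ and $\operatorname{span}\{dh_i\}\subseteq\mathcal{H}_{L^2}^1(M)$. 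For the reverse inclusion, any Dirichlet-finite harmonic $h$ on $M$ is bounded and admits finite end-limits $a_k$ by Li--Tam, so $h-\sum_k a_k h_k$ is Dirichlet-finite harmonic with zero limits at infinity on every end and therefore vanishes by the maximum principle (via exhaustion). The dimension $l-1$ follows from the relation $\sum h_i=1$.

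Since $\mathcal{H}_{L^2}^1(M)$ is finite-dimensional, $\mathcal{H}$ is a finite-rank operator $\mathcal{H}\omega=\sum_j\langle\omega,\phi_j\rangle\phi_j$ with $\{\phi_j\}$ an orthonormal basis formed from the $dh_i$. An upper bound via the triangle inequality and a lower bound by testing against $\phi_j$ itself (approximated in $L^p\cap L^2$) show that $\mathcal{H}$ is bounded on $L^p$ if and only if each $\phi_j\in L^p\cap L^{p'}$. Using the decay $|dh_i|\lesssim r^{1-n_k}$ on $E_k$,
\[
\int_{E_k}|dh_i|^p\,dx\;\lesssim\;\int_1^\infty r^{-(n_k-1)p+n_k-1}\,dr<\infty\ \Longleftrightarrow\ p>\tfrac{n_k}{n_k-1}=n_k',
\]
so $\phi_j\in L^p$ iff $p>n_*'$ and, by the same calculation for $p'$, $\phi_j\in L^{p'}$ iff $p<n_*$; combining these gives $\|\mathcal{H}\|_{p\to p}<\infty$ iff $n_*'<p<n_*$. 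The equivalence $\|I-\mathcal{H}\|_{p\to p}\sim\|\mathcal{H}\|_{p\to p}$ is then automatic, since the two operators differ only by the identity.

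For the parabolic cases, $\R^2\#\R^2$ has both ends of quadratic volume growth, hence $M$ itself is parabolic and every Dirichlet-finite harmonic function is constant; the reduction of the first paragraph forces $\mathcal{H}_{L^2}^1(M)=\{0\}$. For case \eqref{M2} with $\pi_1(M)=\{1\}$, only $\R^{n_2}\times M_2$ is non-parabolic while $\R^2\times M_1$ is parabolic (quadratic volume growth on the zero mode), so the Li--Tam count produces a single escape probability which must be identically $1$, yielding no non-trivial Dirichlet-finite harmonic function modulo constants, and therefore $\mathcal{H}_{L^2}^1(M)=\{0\}$. The main obstacle I anticipate is the precise application of Li--Tam theory to guarantee that every Dirichlet-finite harmonic function on $M$ has well-defined end-limits, particularly in the mixed parabolic/non-parabolic geometry of \eqref{M2}; once this identification is secured, the decay and $L^p$ bookkeeping of the $dh_i$ is a routine consequence of product-structure Green's function estimates.
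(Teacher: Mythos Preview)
Your approach is essentially the same as the paper's: both use simple connectivity to write any $\omega\in\mathcal{H}_{L^2}^1(M)$ as $dh$ with $h$ harmonic, identify the resulting space with bounded harmonic functions modulo constants via the escape probabilities $h_i$, and then read off the $L^p$ range from the decay of $|dh_i|$ on each end. You supply more detail than the paper (which simply asserts the dimension count and cites \cite[Lemma~2.7]{HS} and \cite[Proposition~3.3]{CDS}), invoking Li--Tam theory and the maximum principle explicitly.

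One small gap: your necessity argument for $\|\mathcal{H}\|_{p\to p}<\infty$ requires knowing that $dh_i\notin L^p$ when $p\le n_*'$, but you have only recorded the upper bound $|dh_i|\lesssim r^{1-n_k}$. You need the matching lower bound $|dh_i|\gtrsim r^{1-n_k}$ on at least one end with $n_k=n_*$; the paper gets this from the two-sided estimate $|dh_i|\sim r^{1-n_k}$ in \cite[Lemma~2.7]{HS}, which follows from the explicit Green's function asymptotics you already sketched. With that in hand your testing argument goes through.
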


As a consequence, it follows that:

\begin{theorem}\label{thm2}
Let $M$ be a manifold defined by \eqref{M2} with $\pi_1(M) = \{0\}$ or \eqref{M3}. Then the Hodge projector $\mathcal{P}$ satisfies the bounds
\begin{align}
    \|\mathcal{P}\omega\|_p \le C \|\omega\|_p, \quad \forall\, 1<p<\infty,
\end{align}
for all $\omega \in C_c^\infty(\Lambda^1(M))$.
\end{theorem}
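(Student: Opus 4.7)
The plan is to combine Theorem~\ref{thm0} (which kills the harmonic term), an adaptation of Theorem~\ref{thm1} covering the range $1<p<2$, and a duality argument exploiting the self-adjointness of $\mathcal{P}$ to reach the range $2<p<\infty$.

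By Theorem~\ref{thm0}, $\mathcal{H}_{L^2}^1(M) = \{0\}$ both when $M$ is of type \eqref{M2} with trivial $\pi_1(M)$ and when $M = \mathbb{R}^2 \# \mathbb{R}^2$. Hence $\mathcal{H} = 0$ and $(I-\mathcal{H})\omega = \omega$, so any estimate of the Theorem~\ref{thm1} type $\|\mathcal{P}\omega\|_p \le C\|(I-\mathcal{H})\omega\|_p$ reduces at once to the target bound $\|\mathcal{P}\omega\|_p \le C\|\omega\|_p$.

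For $1 < p < 2$ we adapt the proof of Theorem~\ref{thm1}: since $n_* = 2$ in both \eqref{M2} and \eqref{M3}, the condition $p < n_*$ translates precisely to $p < 2$. The case $p = 2$ is immediate because $\mathcal{P}$ is an orthogonal projection on $L^2(\Lambda^1(M))$. For the upper range $2 < p < \infty$ we invoke the self-adjointness of $\mathcal{P}$: for $\omega, \nu \in C_c^\infty(\Lambda^1(M))$,
\[
|\langle \mathcal{P}\omega, \nu\rangle| = |\langle \omega, \mathcal{P}\nu\rangle| \le \|\omega\|_p \|\mathcal{P}\nu\|_{p'} \le C\|\omega\|_p \|\nu\|_{p'},
\]
with $p' \in (1,2)$ in the range already handled. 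Density of $C_c^\infty(\Lambda^1(M))$ in $L^{p'}(\Lambda^1(M))$ then yields $\|\mathcal{P}\omega\|_p \le C\|\omega\|_p$.

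The chief obstacle is the base estimate for $1 < p < 2$. The proof of Theorem~\ref{thm1} is written for \eqref{M1} under the assumption $n_* \ge 3$, and the presence of a parabolic $\mathbb{R}^2$ end --- whose heat kernel decays only at a slow polynomial rate --- forces a careful revisit of the end-localization and heat-kernel comparison steps. The crucial simplification afforded by Theorem~\ref{thm0} is that we no longer need to accommodate a nontrivial harmonic subspace, so the analysis can focus purely on controlling the exact component, which should remain within the reach of the techniques used for Theorem~\ref{thm1} as long as $p$ is kept strictly below $n_* = 2$.
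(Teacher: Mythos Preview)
Your proposal is correct and follows essentially the same route as the paper: establish $\|\mathcal{P}\omega\|_p \le C\|(I-\mathcal{H})\omega\|_p$ for $1<p<2$ by adapting the bilinear argument from Theorem~\ref{thm1}, invoke Theorem~\ref{thm0} to replace $(I-\mathcal{H})\omega$ by $\omega$, and then use self-adjointness of $\mathcal{P}$ to dualise to $2<p<\infty$. The paper's proof does exactly this, referring back to the lemmata of Sections~\ref{sec3} and~\ref{sec4} (in particular Lemma~\ref{remedy} for $\mathcal{Q}_2$, Lemma~\ref{lemma_bad2} cases (2)--(3) for $\mathcal{Q}_1$, Lemma~\ref{lemma_O}, the claim~\eqref{claim}, and Lemma~\ref{lemma_G4_2} for the $G_4$ piece) which were already stated and proved with the parabolic-end modifications in place; so the ``adaptation'' you flag as the chief obstacle is not something done inside the proof of Theorem~\ref{thm2} itself but is the content of those earlier lemmata.
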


Theorem~\ref{thm2} was inspired by results obtained in \cite{HS1D}, where it was noticed that in radial model of $\R^2\#\R^2$ the Hodge projection is bounded for all $1 \le p \le \infty$ but 
the Riesz transform is only bounded for $p \le 2$.

\begin{remark}
In fact, Theorem~\ref{thm2} follows by a self-improvement argument. Indeed, one can adapt the ideas from the proof of Theorem~\ref{thm1}; see Sections~\ref{sec3} and \ref{sec4} below. 
\end{remark}

Suppose that for some \( 1 < p < \infty \) we have
\[
\| d \Delta^{-1/2} f \|_p \le \|f\|_p 
\]
for all \( f \in C_c^\infty(M) \). This implies that the definition of the Riesz transform \( d \Delta^{-1/2} \) extends to all functions \( f \in L^p(M) \), and the above inequality holds on this larger class.  

The class of functions for which the \emph{reverse} Riesz transform \eqref{rrt} is valid is more delicate, and understanding this class is of central importance for clarifying the relationship between the Riesz transform, the reverse Riesz transform, and the Hodge projection. We summarise the key logical steps below.

\begin{remark}\label{klopot}
From \cite{HS,He1}, the following is known about the Riesz transform on \( M \), defined by \eqref{M1}:
\begin{enumerate}[label=\arabic*)]
    \item \textbf{Boundedness range:} The Riesz transform is bounded on \( L^p \) if and only if \( 1 < p < n_* \), is of weak type \( (1,1) \), and is bounded from \( L^{n_*,1} \) to \( L^{n_*,1} \).

    \item \textbf{Connection to the Hodge projection:}  
    Using the identity
    \[
    d\Delta^{-1}\delta = \left(d\Delta^{-1/2}\right) \left(d\Delta^{-1/2}\right)^*,
    \]
    it follows that the Hodge projection \( \mathcal{P} \) is bounded on \( L^p \) for \( n_*' < p < n_* \). 

    \item \textbf{Necessity of the range:}  
    This condition on \( p \) is also necessary. By \cite{He2}, the reverse Riesz inequality
    \[
    \|\Delta^{1/2} f\|_p \le C \|d f\|_p
    \]
    holds for all \( 1 < p < \infty \). If \( \mathcal{P} \) were bounded for some \( p \le n_*' \), then from \cite{AC} one would deduce (see Lemma~\ref{le_remedy2} below for detail)
    \begin{align}\label{AC_lemma}
        \| \Delta^{-1/2} \delta \omega\|_{p} 
        &= \| \Delta^{1/2} \Delta^{-1} \delta \omega\|_p 
        \le C \| \mathcal{P} \omega \|_p 
        \le C \|\omega\|_p,
    \end{align}
    which would imply boundedness of the Riesz transform on \( L^{p'} \), contradicting \cite{HS}.

    \item \textbf{Additional conditions:}  
    The above implication requires that \( \Delta^{-1} \delta \omega \) is in the class of functions which satisfies
    the reverse Riesz transform \eqref{rrt} so it satisfies  certain decay conditions. This is essential, and in some cases (e.g., in dimension \( 2 \)) the implication fails.

    \item \textbf{Example in dimension two:}  
    We show that the Hodge projection on \( \mathbb{R}^2 \# \mathbb{R}^2 \) is  bounded for all \( 1 < p < \infty \). In \cite{He2}, the first-named author proves that the reverse Riesz transform is bounded for the same range, while \cite{HNS} shows that the Riesz transform is bounded only for \( 1 < p \le 2 \).
\end{enumerate}

\medskip
We emphasise that \eqref{AC_lemma} is more subtle than it appears: it fails on the manifold defined by \eqref{M3}, and \cite[Lemma~0.1]{AC} requires certain simple but essential assumptions (see Section~\ref{Sec7} for details).  Essentially the same calculation as in \eqref{AC_lemma} and in  \cite[Lemma~0.1]{AC}  is used also  in Shen, see \cite[(3.6) calculation]{shen} 
\end{remark}

\begin{remark}
A general criterion due to Shen \cite[Theorem~3.1]{shen} (see also \cite[Theorem~2.3]{AC}) may be used to establish the boundedness of $\mathcal{P}$ on $L^p$. This theorem states that if a manifold $M$ satisfies the doubling condition \eqref{D} together with an $L^q$–$L^2$ reverse Hölder inequality, then $\mathcal{P}$ is bounded on $L^p$ for all $2<p<q$. In particular, by \cite[Example~4]{CJKS}, the reverse Hölder inequality holds on $\mathbb{R}^n \# \mathbb{R}^n$ ($n \geq 3$) for every $1<q<n$. Consequently, Shen’s result implies the boundedness of $\mathcal{P}$ on $L^p$ whenever $n' < p < n$, where $n' = n/(n-1)$. This range agrees precisely with that obtained via the Riesz transform method (see Remark~\ref{klopot}~(2)). By contrast, for the non-doubling manifolds considered in this article, the applicability of Shen’s criterion remains unresolved.
\end{remark}

In this article, we show that the only obstruction to the $L^p$-boundedness of $\mathcal{P}$ for $1<p \leq n_*'$ arises from the harmonic component of the $1$-form. 

The terms responsible for the failure of boundedness are closely related to those encountered in the case of the Riesz transform, namely the regime $\lvert y \rvert \gg \lvert x \rvert$ in the kernel of $\Delta^{-1/2}(x,y)$. Recall the asymptotic expansion from \cite{CCH}: let $\overline M$ denote a compactification of $M = \mathbb{R}^n \# \mathbb{R}^n$ ($n \geq 3$), obtained by attaching two spheres at the ends. Then the Riesz potential satisfies
\begin{align}\label{asy}
    \Delta^{-1/2}(x,y) \sim \sum_{j = n-1}^\infty a_j(x)\,|y|^{-j},
\end{align}
as $y \to \partial \overline M$, with $x \in \overline M$. The leading coefficient $a_{n-1}(x)$ is a nonconstant bounded harmonic function on $\overline M$. By the maximum principle, $d a_{n-1}$ does not vanish identically, so the leading term in the $y$-variable decays only as $\lvert y\rvert^{1-n}$. Such decay pairs boundedly only with functions in $L^{\,n-\epsilon}$ ($\epsilon>0$). This explains why the Riesz transform on $M$ is bounded on $L^p$ precisely for $1<p<n$.

Next, we observe that
\[
\mathcal{P} \;=\; d\Delta^{-1/2}\,(d\Delta^{-1/2})^* \;=\; R R^*,
\]
where $R$ denotes the Riesz transform on $\overline M$. By \cite{CCH}, it follows that $\mathcal{P}$ is bounded on $L^p$ whenever $n'<p<n$.  

For $f,g \in C_c^\infty(\Lambda^1(\overline M)) \subset L^2(\Lambda^1(\overline M))$, consider the bilinear form
\[
\langle \mathcal{P}f,\,g\rangle 
= \langle (I-\mathcal{H})f,\, \mathcal{P}g\rangle
= \iint d\Delta^{-1/2}(x,y)\, R^* g(y)\,dy \;\; (I-\mathcal{H})f(x)\,dx.
\]
We focus on the contribution from the region $y \in \partial \overline{M}$ and $x \in \overline{M}$. Using the asymptotic expansion \eqref{asy}, the leading term yields
\[
\int \Bigg(\int d a_{n-1}(x)\,(I-\mathcal{H})f(x)\,dx\Bigg)\,|y|^{1-n}\,R^* g(y)\,dy.
\]


Suppose now that
\begin{align}\label{decom}
    (I-\mathcal{H})f \;=\; d\alpha \;+\; \delta\beta,
\end{align}
with $\alpha \in C_c^\infty(\overline M)$ and $\beta \in C_c^\infty(\Lambda^2(\overline M))$.  
Disregarding boundary terms, an integration by parts shows that the inner contribution vanishes, since 
\[
\delta(d a_{n-1}) = \Delta a_{n-1} = 0 
\quad\text{and}\quad 
d(d a_{n-1}) = 0.
\] 
Thus, when dualised against a $1$-form without a harmonic component, the problematic leading term is annihilated. By a standard duality argument, this suggests that the lower threshold for the $L^p$-boundedness of $\mathcal{P}$ (namely $p\le n'$) can in fact be relaxed.  

Indeed, our main results establish that in the present setting
\begin{align*}
    \big|\langle \mathcal{P}f,\,g\rangle\big| 
    \;\le\; C\, \|(I-\mathcal{H})f\|_p \,\|g\|_{p'}, 
    \qquad \forall\, 1<p<n.
\end{align*}
The upper threshold ($p=n$) persists, reflecting the boundedness of $R^*$, i.e.
\[
\|R^*\|_{p\to p}\le C 
\quad \text{if and only if} \quad 
n'<p<\infty.
\] 
Whether this upper bound is sharp, however, remains an open question.

This perspective may also be understood in terms of an interior harmonic annihilation mechanism.  
Indeed, owing to the symmetry of the kernel of the Riesz potential, the obstruction to the boundedness of $R^*$ arises in the regime $|x| \gg |y|$, where
\begin{align}\label{asy2}
    \Delta^{-1/2}(x,y) \;\sim\; \sum_{j = n-1}^\infty |x|^{-j}\, a_j(y).
\end{align}

By \eqref{asy2} and the decomposition \eqref{decom}, the leading term of $R^*(I-\mathcal{H})f(x)$ in the range $|x| \gg |y|$ expands as
\begin{align*}
 |x|^{1-n} \int a_{n-1}(y)\, \delta (I-\mathcal{H})f(y)\,dy 
   = |x|^{1-n} \int a_{n-1}(y)\, \Delta \alpha(y)\,dy 
   = 0,
\end{align*}
which eliminates the lower threshold for the $L^p$-boundedness of $R^*$, namely $p=n'$.  

However, since $\mathcal{P}=RR^*$ and $R$ appears on the left, the upper threshold in the estimate
\begin{align*}
    \|\mathcal{P}f\|_p \;\le\; C \,\|(I-\mathcal{H})f\|_p
\end{align*}
remains at $p=n$.  

We emphasise that we do not assert the optimality of this upper bound $p<n$.

\section{Preliminaries}

\subsection{Parametrix for resolvent at low energy} 
In this section, we recall the construction of the parametrix for the resolvent operator at low energy on various classes of manifolds with ends, following \cite{CCH,HS,HNS}.  
Let $\Delta_i$ denote the Laplace--Beltrami operator on $\mathbb{R}^{n_i} \times M_i$. For each end, fix a reference point $z_i^0 \in K_i$, which will serve as the origin.  

We begin by recalling two key lemmata, \ref{keylemma_HS} and \ref{keylemma_HNS}, as stated in \cite{HS,HNS}.  
Throughout the article, we assume $0<k_0 \leq \tfrac{1}{2}$ and use the notation
\[
|z| := \sup_{a \in K} d(z,a).
\]
In particular, $|z| \geq C$ for all $z \in M$ sufficiently far from the compact core.

\begin{lemma}\cite[Lemma~2.7]{HS}\label{keylemma_HS}
Let $M$ be defined by \eqref{M1} with $l\ge 2$ and $n_* = \min_i n_i \ge 3$. Let $v \in L^{\infty}(M)$ with compact support in $K$. Then, there is a function $u: M \times \mathbb{R}^+ \to  \mathbb{R}$ such that $(\Delta + k^2)u = v$ and on the $i$th end
\begin{gather*}\label{2.7e}
|u(z, k)| \le  C \|v\|_\infty \begin{cases}
    1, & z\in K,\\
    |z|^{2-n_i} e^{- ck d(z_i^0, z)}, & z \in  E_i.
\end{cases}
\\
|\nabla u(z, k)| \le  C \|v\|_\infty \begin{cases}
    1, & z\in K,\\
    |z|^{1-n_i} e^{- ck d(z_i^0, z)}, & z\in E_i,
\end{cases}
\end{gather*}
for all $0\le k\le k_0$, $1\le i\le l$. 

\end{lemma}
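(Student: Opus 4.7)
The strategy is to construct $u$ as a parametrix built from the explicit model-end resolvents, corrected by a Neumann iteration. Let $R_i(k) := (\Delta_i + k^2)^{-1}$ denote the resolvent on the model end $\R^{n_i}\times M_i$, and let $G_{\widetilde K}(k)$ denote a convenient interior Green operator (for instance, the Dirichlet resolvent on a bounded enlargement $\widetilde K$ of $K$). Using a partition of unity subordinate to the decomposition $M = \widetilde K \cup \bigcup_i E_i$, one assembles a parametrix of the schematic form
\[
Q(k)v \;=\; \chi_0\,G_{\widetilde K}(k)\bigl(\widetilde\chi_0 v\bigr) \;+\; \sum_{i=1}^{l} \chi_i\,R_i(k)\bigl(\widetilde\chi_i v\bigr),
\]
with cutoffs chosen so that each companion $\widetilde\chi_i$ covers the collar near the $i$th neck where $v$ may be supported, and each $\chi_i$ equals one on the far part of the $i$th end.

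The required decay of $Q(k)v$ follows from pointwise bounds on the model resolvent $R_i(k)$. I would perform a separation-of-variables expansion on $\R^{n_i}\times M_i$: with $\{\phi_{i,m}\}_{m\ge 0}$ an orthonormal basis of $L^2(M_i)$ consisting of Laplace eigenfunctions with eigenvalues $0 = \mu_{i,0} < \mu_{i,1} \le \cdots$, the Schwartz kernel decomposes as
\[
R_i(k)\bigl((x,\theta),(y,\theta')\bigr) \;=\; \sum_{m\ge 0} G_{n_i}\!\Bigl(|x-y|;\sqrt{k^2+\mu_{i,m}}\,\Bigr)\,\phi_{i,m}(\theta)\,\overline{\phi_{i,m}(\theta')},
\]
where $G_n(r;\kappa)$ is the Euclidean Green's function of $-\Delta_{\R^n}+\kappa^2$, proportional to $(\kappa/r)^{(n-2)/2} K_{(n-2)/2}(\kappa r)$. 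Standard asymptotics of the Macdonald function then give, uniformly for $0<\kappa\le 1$,
\[
|G_n(r;\kappa)| \le C\, r^{2-n}\, e^{-\kappa r/2}, \qquad |\partial_r G_n(r;\kappa)| \le C\, r^{1-n}\, e^{-\kappa r/2}.
\]
For $m\ge 1$, the positive mass $\mu_{i,m}>0$ contributes an unconditional factor $e^{-\sqrt{\mu_{i,m}}\, r/2}$, and the tail in $m$ is absorbed via smoothness of $\widetilde\chi_i v$ in the transverse direction (yielding rapid decay of the expansion coefficients by standard elliptic bounds).

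Since $v$ is compactly supported in $K$ and the neck has bounded diameter, for $z \in E_i$ at large distance from $K$ one has $|x - y| \sim |z|$ uniformly in the source variable $w$. Substituting the envelopes above and integrating against $\|v\|_\infty$ delivers
\[
|Q(k)v(z)| \le C\|v\|_\infty\,|z|^{2-n_i}\,e^{-ck|z|}, \qquad |\nabla Q(k)v(z)| \le C\|v\|_\infty\,|z|^{1-n_i}\,e^{-ck|z|},
\]
for $z\in E_i$, together with a trivial $O(\|v\|_\infty)$ bound on the compact piece. Setting $E(k) := (\Delta+k^2)Q(k) - I$, a direct computation shows that the Schwartz kernel of $E(k)$ is supported in the uniformly relatively compact transition region $\bigcup_i\{\nabla\chi_i \ne 0\}$; in particular $E(k)v$ is itself compactly supported, so a single further application of $Q(k)$ refreshes the decay profile. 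Setting $u := Q(k)(I+E(k))^{-1}v$ then yields $(\Delta+k^2)u = v$ with the asserted pointwise and gradient bounds.

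The main obstacle is verifying that $(I+E(k))^{-1}$ exists and is bounded uniformly as $k \to 0^+$ on a function class carrying the required envelope. Since $E(k)$ is compact with smooth kernel depending continuously on $k$, this is a Fredholm-type question which reduces to triviality of $\ker(I+E(0^+))$. The crucial input is non-parabolicity of $M$, ensured by $n_* \ge 3$: the $L^2$ resolvent of $\Delta$ extends continuously to $k = 0$ in a suitable weighted sense, so any element of $\ker(I+E(0))$ produces, after applying $Q(0)$, a harmonic function satisfying the polynomial decay of the parametrix, forcing it to vanish. This step is the delicate technical point of the argument and is exactly where the hypothesis $n_* \ge 3$ enters essentially.
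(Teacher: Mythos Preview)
The paper does not actually prove this lemma; it is quoted verbatim from \cite[Lemma~2.7]{HS} and used as a black box (indeed, the paper later \emph{invokes} this very lemma to define the correction term $G_4(k)$ in the resolvent parametrix for manifolds of type \eqref{M1}). So there is no ``paper's own proof'' to compare against. That said, your parametrix-plus-Fredholm strategy is the standard route and is essentially what \cite{HS} does: glue model-end resolvents to an interior piece, read off the $|z|^{2-n_i}e^{-ck|z|}$ envelope from Macdonald-function asymptotics, and correct the compactly supported error.

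There is one genuine gap in your Fredholm step. You argue that if $w\in\ker(I+E(0))$ then $Q(0)w$ is harmonic with polynomial decay, hence $Q(0)w\equiv 0$ by the maximum principle. That much is fine. But you then implicitly conclude $w=0$, which does not follow: $Q(0)$ need not be injective on compactly supported data, and your argument gives no mechanism to pass from $Q(0)w=0$ back to $w=0$. In the closely related constructions for \eqref{M2}--\eqref{M3} carried out in this paper (following \cite{HNS}), the analogue of $I+E(0)$ genuinely has a nontrivial finite-dimensional kernel, and one must add an explicit finite-rank correction $\mathcal{O}=\sum_j \psi_j\langle\varphi_j,\cdot\rangle$ before inverting (see \eqref{eq_error4} and Lemma~\ref{null}). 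In the non-parabolic setting $n_*\ge 3$ the situation is better, but your sketch does not exploit this: you should either (i) show that the error $E(k)$ is actually small in a suitable weighted operator norm uniformly down to $k=0$, so that the Neumann series $\sum_j(-E(k))^j$ converges outright, or (ii) supply a direct injectivity argument for $I+E(0)$ that does not pass through $Q(0)$, for instance by showing that $w=-E(0)w$ together with the structure of $E(0)$ (commutator terms localized on the necks) forces $w=0$. Either route works, but as written the kernel-triviality claim is asserted rather than proved.
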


Next, we introduce the function
\begin{align*}
    \operatorname{ilg}(k) \;:=\; 
    \begin{cases}
        \dfrac{-1}{\log k}, & 0<k \leq \tfrac{1}{2}, \\[6pt]
        0, & k=0.
    \end{cases}
\end{align*}
With this notation, the key lemma of \cite{HNS} can be formulated as follows.

\begin{lemma}\cite[Lemma~2.14]{HNS}\label{keylemma_HNS}
Let $M$ be defined by \eqref{M2} with $n_2\ge 3$. Let $v \in C_c^{\infty}(M)$. Suppose $\phi$ is the solution to the equation $\Delta \phi = -v$. Then, for every integer $q$, there  exists an approximate solution $u(z,k)$ such that $u(z,0)=-\phi(z)$ and
\begin{align*}
    (\Delta + k^2)u - v = \chi_{K} O((\textrm{ilg}(k))^q |z|^{-\infty}) + O(k |z|^{-\infty}),
\end{align*}
where the notation $f = O(|z|^{-\infty})$ is shorthand for the statement $|f|\le C_\kappa |z|^{-\kappa}$ for any $\kappa \ge 0$. 

Moreover, one has estimates
\begin{gather*}
|u(z, k)| \le \begin{cases}
    C, & z\in K,\\
    C |z|^{2-n_2} e^{- ck |z|}, & z \in  E_2,\\
    C e^{-ck |z|}, & z \in E_1.
\end{cases}\\  
|\nabla u(z, k)| \le \begin{cases}
    C, & z\in K,\\
    C |z|^{1-n_2} e^{- ck |z|}, & z \in  E_2,\\
    C \left[|z|^{-2} + (\textrm{ilg}(k))|z|^{-1}\right] e^{-ck|z|}, & z \in E_1
\end{cases}
\end{gather*}
for all $0\le k\le k_0$.

In addition, 
\begin{align}\label{deri1}
|\nabla \left(\phi(z)+u(z,k)\right)| = O(\textrm{ilg}(k)|z|^{-1}) \chi_{E_1}(z) + O(k|z|^{-1}).    
\end{align}

\end{lemma}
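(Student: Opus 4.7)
The plan is to construct $u(z,k)$ by solving the $k=0$ problem globally and then iteratively correcting the $k^2$-error on each end using the appropriate model resolvents.

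First, I would produce a distinguished solution $\phi$ to $\Delta \phi = -v$. Since $M$ has a well-defined positive minimal Green's function $G(z,z')$, define $\phi(z) = \int_M G(z,z') v(z')\,dz'$. The asymptotics of $G$ on each end give the following profile for $\phi$: on the high-dimensional end $E_2 \simeq \mathbb{R}^{n_2}\times M_2$ one has $\phi(z) = O(|z|^{2-n_2})$ and $|\nabla\phi|=O(|z|^{1-n_2})$ via separation of variables on $M_2$ (the non-zero Neumann modes contribute exponentially decaying terms, the zero mode gives the Newtonian kernel on $\mathbb{R}^{n_2}$); on the two-dimensional end $E_1 \simeq \mathbb{R}^2\times M_1$ the zero mode on $M_1$ produces the unavoidable logarithmic contribution $a\log|z|+b+O(|z|^{-1})$, while higher modes decay exponentially. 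In particular $\phi$ itself is only bounded (not decaying) on $E_1$, which is the fundamental difficulty.

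Setting $u_0(z,k):=-\phi(z)$ gives $(\Delta+k^2)u_0 - v = -k^2\phi$, so the task is to approximate $\Delta^{-1}(-k^2\phi)$ by a function whose error is improved. For this I would build a two-end parametrix with cutoff functions $\chi_K,\chi_1,\chi_2$ subordinate to $K,E_1,E_2$. On $E_2$, since $n_2\ge 3$, the model resolvent kernel $(\Delta_{\mathbb{R}^{n_2}}+k^2)^{-1}$ has the explicit Bessel form $c_{n_2}(k/|z|)^{(n_2-2)/2}K_{(n_2-2)/2}(k|z|)$, yielding the exponential factor $e^{-ck|z|}$ and the $|z|^{2-n_2}$ spatial decay claimed. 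On $E_1$ the model is $(\Delta_{\mathbb{R}^2}+k^2)^{-1}$, whose kernel is a multiple of $K_0(k|z-z'|)$; the small-argument expansion $K_0(r)=-\log(r/2)-\gamma+O(r^2\log r)$ is precisely the source of the factor $\operatorname{ilg}(k)$, since solving $\Delta_{\mathbb{R}^2}w = f$ for compactly supported $f$ against this kernel at energy $k$ produces a constant $\sim -\log k$ times the total integral of $f$. The gradient estimate \eqref{deri1} on $E_1$ follows from differentiating this Bessel kernel: the $|z|^{-2}$ term is the Newtonian contribution to $\nabla\phi$ on the 2-end (coming from the $O(|z|^{-1})$ piece of $\phi$), while $\operatorname{ilg}(k)|z|^{-1}$ is the derivative of the logarithmic low-energy correction modulated by the $e^{-ck|z|}$ factor.

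Next I would iterate. Given the current approximation $u_j$ with $(\Delta+k^2)u_j-v = r_j$, the residual $r_j$ is supported where the cutoffs fail to be smooth, hence in a bounded annulus; solving $\Delta w_{j+1} = -r_j$ against $G$ and subtracting its far-field asymptotic (using the same model parametrices on each end) produces $u_{j+1}$ whose residual gains a further factor of $\operatorname{ilg}(k)$ on $E_1$ (because the new source is compactly supported and the two-dimensional contribution picks up one extra $\log k$), and a factor of $k$ globally. After $q$ iterations one obtains the claimed error bound $\chi_K O((\operatorname{ilg}(k))^q|z|^{-\infty})+O(k|z|^{-\infty})$. At each stage the pointwise estimates on $u$ and $\nabla u$ reduce to standard bounds on the Bessel kernels $K_\nu(kr)$ and their derivatives.

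The main obstacle, as already indicated, is the two-dimensional end: the low-energy resolvent of $\Delta_{\mathbb{R}^2}+k^2$ is not uniformly bounded as $k\to 0$ on the zero mode, so one cannot simply write $u=(\Delta+k^2)^{-1}v$ and take $k\to 0$. Handling this requires an explicit bookkeeping of the $-\log k$ coefficients through the iteration, which is precisely what the function $\operatorname{ilg}$ encodes; ensuring that each stage of the parametrix both preserves the exponential decay $e^{-ck|z|}$ (so the error lies in $L^\infty$ with rapid decay) and gains a factor of $\operatorname{ilg}(k)$ on $E_1$ is the delicate part of the argument, and is what distinguishes the present construction from the purely polynomial-decay setting of Lemma~\ref{keylemma_HS}.
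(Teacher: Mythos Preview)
Your overall strategy—iterate a parametrix construction end by end, tracking the $\operatorname{ilg}(k)$ factors that arise from the two–dimensional resolvent—is the right one, and matches the architecture of the proof in \cite{HNS} (reproduced in the appendix here for $\mathbb{R}^2\#\mathbb{R}^2$). But two points need repair.

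First, your description of $\phi$ on $E_1$ is inconsistent. You write that the zero mode on $M_1$ produces ``$a\log|z|+b+O(|z|^{-1})$'' and then that ``$\phi$ itself is only bounded''. These cannot both hold unless $a=0$. In fact $a=0$: because $M$ has the non-parabolic end $E_2$, $M$ admits a positive minimal Green's function, and for $z'\in K$ the function $G(\cdot,z')$ is bounded harmonic on $E_1$, hence tends to a finite constant there. Thus $\phi\to\beta_1$ on $E_1$ with $\phi-\beta_1=O(|z|^{-1})$ and $\nabla\phi=O(|z|^{-2})$. This constant $\beta_1$ is the whole story on $E_1$, and it is what forces the $\operatorname{ilg}(k)$ mechanism.

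Second, and more seriously, your iteration does not start correctly. You set $u_0=-\phi$ with residual $-k^2\phi$, then propose to hit this with model resolvents on each end. But on $E_1$ the residual is asymptotic to the \emph{non-compactly-supported} constant $-k^2\beta_1$, so your subsequent claim that ``the residual $r_j$ is supported where the cutoffs fail to be smooth, hence in a bounded annulus'' is false at $j=0$. The actual construction (see the appendix proof of Lemma~\ref{keylemma_proof}, formula~\eqref{eq_u1}) does not try to invert on the constant; instead it splits $-\phi=(-\phi+\beta_1)-\beta_1$ on $E_1$, extends the decaying part $-\phi+\beta_1$ to $\Phi_1(z,k)$ via Lemma~\ref{lemma_HNS_2.12}, and replaces the constant $-\beta_1$ by the explicit $(\Delta+k^2)$–harmonic correction $-\operatorname{ilg}(k)\,\beta_1\,\chi_1(r)K_0(kr)$. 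Since $\operatorname{ilg}(k)K_0(kr)=\operatorname{ilg}(k)(-\log k-\log r+O(1))\to 1$ as $k\to 0$, this term reproduces $-\beta_1$ at $k=0$ (so $u(z,0)=-\phi(z)$), is annihilated by $\Delta+k^2$ away from $r=0$, and its commutator with the cutoff $\chi_1$ is genuinely $O(\operatorname{ilg}(k))$ and compactly supported in $K$. Only after this step does the residual become $\operatorname{ilg}(k)v_1+O(k|z|^{-\infty})$ with $v_1\in C_c^\infty(K)$, and the iteration you describe can proceed. This Bessel–correction device is the missing idea in your proposal; once it is in place, your remaining outline (repeat $q$ times, read off the pointwise and gradient bounds from standard $K_\nu$ asymptotics) is correct.
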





\begin{remark}
We record the following observations:  
\begin{enumerate}[label=\arabic*)]
\item The function $\operatorname{ilg}(k)$ tends to zero as $k \to 0$, but more slowly than any polynomial $k^{\alpha}$ with $\alpha > 0$.  
\item Compared with the original formulation, the error estimate for $(\Delta + k^2)u - v$ is sharper, since the ``bad'' term $\operatorname{ilg}(k)$ is supported only on the compact set $K$.  
\item The estimate for $\nabla(\phi + u)$ in \eqref{deri1} follows directly from the original proof without additional work.  
\end{enumerate}
In the special case $M = \mathbb{R}^2 \# \mathbb{R}^2$, points 2) and 3) will be established rigorously in Lemma~\ref{keylemma_proof} below.
\end{remark}

Next, adapting the argument of \cite[Lemma~2.14]{HNS}, we extend the key lemma to the case $M=\mathbb{R}^2 \# \mathbb{R}^2$. The proof is deferred to Appendix~\ref{appendix}.

\begin{lemma}\label{keylemma}
Let $M=\mathbb{R}^2\# \mathbb{R}^2$.  
For $v \in C_c^\infty(K)$, let $\phi$ denote the solution to $\Delta \phi = -v$ given by \cite[Lemma~2.9]{HNS}. Then, for any integer $q$, there exists an approximate solution $u(z,k)$ to $(\Delta+k^2)u=v$ such that $u(z,0)=-\phi(z)$ and
\begin{align*}
    (\Delta+k^2)u - v \;=\; \chi_K\, O(\operatorname{ilg}(k)^q |z|^{-\infty}) + O(k |z|^{-\infty}),
\end{align*}
with the bounds, valid for all $0 \leq k \leq k_0$,
\begin{align*}
    |u(z,k)| &\le C \begin{cases}
        1, & z \in K, \\[4pt]
        e^{-ck|z|}, & z \in E_i,
    \end{cases} \\[6pt]
    |\nabla u(z,k)| &\le C \begin{cases}
        1, & z \in K, \\[4pt]
        \big(|z|^{-2} + |z|^{-1}\operatorname{ilg}(k)\big)\, e^{-ck|z|}, & z \in E_i.
    \end{cases}
\end{align*}
Moreover,
\begin{align}\label{deri2}
    |\nabla(\phi(z)+u(z,k))| \;=\; O(\operatorname{ilg}(k)\,|z|^{-1}).
\end{align}
\end{lemma}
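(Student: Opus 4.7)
The plan is to transplant the parametrix construction of \cite[Lemma~2.14]{HNS} from the asymmetric case \eqref{M2} to the symmetric setting $M=\mathbb{R}^2\#\mathbb{R}^2$. In the original argument only the $\mathbb{R}^2$ end $E_1$ produced the $\operatorname{ilg}(k)$ factor, while the higher-dimensional end $E_2$ contributed only polynomial decay. Here both ends are two-dimensional, so I would replicate the $E_1$-side machinery of \cite{HNS} symmetrically on each of the two ends and carefully control the coupling they induce through the compact core $K$.

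The first step is to recall from \cite[Lemma~2.9]{HNS} the asymptotic expansion of $\phi$ on each end: fixing Euclidean coordinates under the isometry $E_i\simeq\mathbb{R}^2\setminus K_i$, one has $\phi(z)=\phi_i^{\infty}+\sum_{j\ge 1}(a_{i,j}+b_{i,j}\log|z|)|z|^{-j}$, with constants determined by $v$. The natural building blocks for the approximate resolvent are the normalised modified Bessel functions $\widetilde{K}_0(z,k):=\operatorname{ilg}(k)\,K_0(k|z|)$ and $\widetilde{K}_j(z,k):=c_j|z|^{-j}K_j(k|z|)$ for $j\ge 1$: each is annihilated by $(\Delta+k^2)$ in the Euclidean region of the end, satisfies $\widetilde{K}_0(\cdot,0)\equiv 1$ and $\widetilde{K}_j(\cdot,0)\equiv|z|^{-j}$ after suitable normalisation, and decays exponentially once $k|z|\gtrsim 1$. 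Using a partition of unity $\{\chi_K,\chi_1,\chi_2\}$ subordinate to $M=K\cup E_1\cup E_2$, I would then define
\[
u(z,k) \;=\; -\phi(z) \;+\; \sum_{i=1}^{2}\chi_i(z)\sum_{j=0}^{q}\bigl[\phi^{(i)}_j(z)-\widetilde{\phi}^{(i)}_j(z,k)\bigr],
\]
where $\phi^{(i)}_j$ is the $j$-th term in the expansion of $\phi$ on $E_i$ and $\widetilde{\phi}^{(i)}_j$ is its Bessel-deformed counterpart built from the $\widetilde{K}_\ell$'s.

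The identity $u(z,0)=-\phi(z)$ is then automatic because each $\widetilde{\phi}^{(i)}_j$ reduces to $\phi^{(i)}_j$ in the limit $k\to 0^{+}$. For $k>0$ the pointwise bounds on $u$ and $\nabla u$ follow from the standard asymptotics of $K_0,K_1$ in the two regimes $k|z|\lesssim 1$ and $k|z|\gtrsim 1$; in particular the $\operatorname{ilg}(k)|z|^{-1}$ factor in the gradient bound originates from $\nabla K_0(k|z|)=-k\,K_1(k|z|)\,\widehat{z}$ combined with $K_1(t)\sim t^{-1}$ as $t\to 0$. The error $(\Delta+k^2)u-v$ is generated entirely by the commutators $[\Delta,\chi_i]$ acting on the Bessel corrections: these live in the compact gluing annulus, and the exponential decay of the $\widetilde{K}_\ell$'s for $k|z|\gtrsim 1$ delivers the $O(k|z|^{-\infty})$ part, while the truncation of the expansion at order $q$ accounts for the $\operatorname{ilg}(k)^q|z|^{-\infty}$ piece confined to $K$.

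The main technical obstacle is the coupling between the two Bessel corrections through the compact core, which is absent from \cite{HNS}: there only $E_1$ required the full Bessel machinery, whereas here each commutator $[\Delta,\chi_i]$ supported in the transition region produces an error that feeds back into the construction on the \emph{other} end. Verifying that the coupled iterative scheme still yields the prescribed $\operatorname{ilg}(k)^q$ gain at each step, and that the refined estimate \eqref{deri2} — which captures the precise cancellation between $\phi$ and $u$ — survives the symmetrisation, is the heart of the argument, and it is precisely this step whose detailed verification is deferred to Appendix~\ref{appendix}.
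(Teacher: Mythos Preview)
Your overall plan is the same as the paper's (Appendix~\ref{appendix}): deform $-\phi$ on each planar end using $K_0$ for the constant-at-infinity part and \cite[Lemma~2.12]{HNS} for the decaying part, glue with cutoffs, and iterate. But your explanation of the $\operatorname{ilg}(k)^q$ gain is wrong: it does \emph{not} come from truncating the $|z|^{-j}$ expansion of $\phi$ at order $q$. The higher Bessel functions $K_j$, $j\ge 1$, carry no logarithmic singularity and contribute only $O(k|z|^{-\infty})$ errors --- which is exactly why the paper invokes Lemma~\ref{lemma_HNS_2.12} as a black box for the decaying piece $-\phi+\beta_i$ rather than expanding term by term as you propose. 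The power $\operatorname{ilg}(k)^q$ arises from \emph{iteration}: one pass gives $(\Delta+k^2)u_1-v=\operatorname{ilg}(k)v_1+O(k|z|^{-\infty})$ with $v_1\in C_c^\infty(K)$; running the same construction with $v_1$ in place of $v$ produces $u_2$, and $u_1+\operatorname{ilg}(k)u_2$ has compactly supported error $\operatorname{ilg}(k)^2v_2$; repeat $q$ times. You allude to an ``iterative scheme'' at the end, but your ansatz $\sum_{j=0}^q$ and the sentence crediting truncation for $\operatorname{ilg}(k)^q$ do not match that mechanism. (A minor point: a bounded function harmonic outside a compact set in $\mathbb{R}^2$ has no $\log|z|$ terms in its expansion, so your $b_{i,j}$ should all vanish.)

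On the core gluing --- your ``main technical obstacle'' --- the paper is more explicit than you are. It writes the first approximant as
\[
u_1=-\operatorname{ilg}(k)\sum_i\beta_i\chi_iK_0(kr)+\sum_i\chi_i\Phi_i+(1-\chi_1-\chi_2)\bigl(\theta\Phi_1+(1-\theta)\Phi_2\bigr),
\]
with the weight $\theta=\beta_2/(\beta_2-\beta_1)$ chosen precisely so that on $\operatorname{supp}\nabla\chi_i$ the $\Delta\chi_i$ contributions combine with the $K_0$ correction to leave an $O(\operatorname{ilg}(k))$ remainder rather than $O(1)$. This weighted patching \emph{is} the resolution of the coupling you flag. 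Your formula achieves the same cancellation implicitly, since $\beta_i\bigl(1-\operatorname{ilg}(k)K_0(k|z|)\bigr)=O(\operatorname{ilg}(k))$ on compact sets, but displaying this computation --- and then reading off \eqref{deri2} from it --- is essentially the entire content of the proof.
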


\begin{proof}
See Appendix~\ref{keylemma_proof}.
\end{proof}

Let $\phi_i \in C^\infty(M)$ with $\operatorname{supp}(\phi_i)\subset E_i$, $0 \leq \phi_i \leq 1$, and $\phi_i=1$ outside a compact set containing $K_i$. Define $v_i = -\Delta \phi_i \in C_c^\infty(K)$.  
By Lemmata~\ref{keylemma_HS}, \ref{keylemma_HNS}, and \ref{keylemma}, one can construct exact and approximate solutions $u_i(\cdot,k)$ as follows. For $M$ of the form \eqref{M1},
\begin{align*}
    (\Delta+k^2)u_i - v_i = 0,
\end{align*}
while for $M$ given by \eqref{M2} or \eqref{M3},
\begin{align*}
    (\Delta+k^2)u_i - v_i 
    = O\!\left(\operatorname{ilg}(k)^q |z|^{-\infty}\right)\chi_K + O(k|z|^{-\infty}).
\end{align*}

Denote by $\Delta_i$ the Laplace--Beltrami operator on each end, i.e. $\Delta_i = \Delta_{\mathbb{R}^{n_i}\times M_i}$. A standard computation (see \cite[Sec.~2.2]{HS} and \cite[Sec.~2.1]{HNS}) yields the resolvent kernel estimate
\begin{align}\label{eq_reso}
    (\Delta_i+k^2)^{-1}(z,z') \le C \begin{cases}
        \big(d(z,z')^{2-N} + d(z,z')^{2-n_i}\big)\, e^{-ckd(z,z')}, & n_i \geq 3, \\[4pt]
        \big(d(z,z')^{2-N} + 1+|\log(kd(z,z'))|\big)\, e^{-ckd(z,z')}, & n_i = 2, \\[4pt]
        |K_0(k|z-z'|)|, & \Delta_i = \Delta_{\mathbb{R}^2},
    \end{cases}
\end{align}
where $K_0$ denotes the modified Bessel function of the second kind of order zero.  

For the gradient estimate, one has
\begin{align}\label{eq_gradient_reso}
    |\nabla_z (\Delta_i+k^2)^{-1}(z,z')| \le C \begin{cases}
        \big(d(z,z')^{1-N} + d(z,z')^{1-n_i}\big)\, e^{-ckd(z,z')}, & n_i \geq 3, \\[4pt]
        \big(d(z,z')^{1-N} + d(z,z')^{-1}\big)\, e^{-ckd(z,z')}, & n_i = 2, \\[4pt]
        |z-z'|^{-1} e^{-ck|z-z'|}, & \Delta_i = \Delta_{\mathbb{R}^2}.
    \end{cases}
\end{align}
Finally, following the approach of \cite{HS,HNS}, one obtains for second derivatives
\begin{align}\label{dd}
    |\nabla_z \nabla_{z'} (\Delta_i+k^2)^{-1}(z,z')| \le C \begin{cases}
        \big(d(z,z')^{-N} + d(z,z')^{-n_i}\big)\, e^{-ckd(z,z')}, & n_i \geq 3, \\[4pt]
        \big(d(z,z')^{-N} + d(z,z')^{-2}\big)\, e^{-ckd(z,z')}, & n_i = 2, \\[4pt]
        |z-z'|^{-2} e^{-ck|z-z'|}, & \Delta_i = \Delta_{\mathbb{R}^2}.
    \end{cases}
\end{align}


Next, choose an open set $\widetilde{K}$ containing $K$ such that 
\[
\sum_i \phi_i = 1 \quad \text{in a neighbourhood of } \widetilde{K}^c.
\]
Let $G_{\mathrm{int}}(k)$ denote an interior parametrix for the resolvent at energies $0 < k \leq k_0$, whose kernel is compactly supported in a neighbourhood of the diagonal set 
$
K_{\Delta} := \{(z,z) \,;\, z \in K\}.
$
Moreover, for each $i$, the kernel $G_{\mathrm{int}}(k)(z,z')$ agrees with $(\Delta_i + k^2)^{-1}(z,z')$ in a sufficiently small neighbourhood of $K_{\Delta}$.  
In particular, we assume that 
\[
G_{\mathrm{int}}(k)(z,z') = (\Delta_i + k^2)^{-1}(z,z') 
\quad \text{on the support of } \nabla \phi_i(z)\,\phi_i(z').
\] 
Thanks to \cite{HS,HNS}, the resolvent kernel admits a decomposition into four pieces:
\begin{align*}
(\Delta + k^2)^{-1}(z,z') 
    &= \sum_{j=1}^4 G_j(k)(z,z'), 
    \qquad 0 < k \leq k_0,
\end{align*}
where
\begin{align}\label{G1}
G_1(k)(z,z') &= \sum_{i=1}^l (\Delta_{i} +k^2)^{-1}(z,z') \phi_i(z) \phi_i(z'),\\ \label{G2}
G_2(k)(z,z') &= G_{\textrm{int}}(k)  \left(1-\sum_{i=1}^l \phi_i(z)\phi_i(z')\right),\\ \label{G3}
G_3(k)(z,z') &= \sum_{i=1}^l (\Delta_{i} +k^2)^{-1}(z_i^0,z') u_i(z,k)\phi_i(z').
\end{align}
However, for the term $G_4(k)$, its construction for $M$ defined by \eqref{M1} and \eqref{M2}, \eqref{M3} are significantly different. For the case where there is no parabolic end, i.e., \eqref{M1}, the term $G_4(k)$ is defined by Lemma~\ref{keylemma_HS}. Indeed, one computes the error term $E(k)$ defined by $(\Delta+k^2)\sum_{j=1}^3 G_j(k) = I + E(k)$ explicitly:
\begin{align}\label{eq_error1}
E(k)(z,z') = \sum_{i = 1}^l \Big( -2 \nabla \phi_i(z) \phi_i(z') \big(\nabla_z (\Delta_{i} + k^2)^{-1}(z,z') - \nabla_z G_{\textrm{int}}(z,z')\big)\\ \label{eq_error2}
    +\phi_i(z') v_i(z) \big( -(\Delta_{i} + k^2)^{-1}(z,z') + G_{\textrm{int}}(k)(z,z') + (\Delta_{i} + k^2)^{-1}(z_i^0,z')\big) \Big)\\ \label{eq_error3}
    + \big( (\Delta + k^2)G_{\textrm{int}}(k)(z,z') - \delta_{z'}(z)\big) \left(1-\sum_{i=1}^l \phi_i(z)\phi_i(z')\right).
\end{align}
In the above formula, it is clear to see that $E(k)(z,z')$ is smooth and compactly supported in the $z$-variable. By Lemma~\ref{keylemma_HS}, we define 
\begin{align}
    G_4(k)(z,z') &= - (\Delta+k^2)^{-1} \left(E(k)(\cdot, z')\right)(z)\\ \nonumber
    &= - \int (\Delta + k^2)^{-1}(z,s) E(k)(s,z') ds.
\end{align}
While for the case \eqref{M2} and \eqref{M3}, the situations are different. In fact, the error terms for cases \eqref{M2} and \eqref{M3} are given by the formula:
\begin{align}\label{eq_error4}
    \Tilde{E}&(k)(z,z')\\ \nonumber
    &= E(k)(z,z') + \sum_{i=1}^2 \phi_i(z') \left[ (\Delta+k^2)u_i(z,k) - v_i(z) \right] (\Delta_i+k^2)^{-1}(z_i^0,z') + (\Delta+k^2)\sum_{j=1}^{\mathcal{N}} \psi_i(z)  \varphi_i(z')\\ \nonumber
    &:= \Tilde{E}_1(k)(z,z') + (\Delta+k^2)\sum_{j=1}^{\mathcal{N}} \psi_i(z)  \varphi_i(z'),
\end{align}
where $\mathcal{N}$ is a finite integer and $\{\varphi_i\}$ are a basis of the null space of $I+\Tilde{E}_1(0)$ and $\{\Delta \psi_i\}$ are a basis for a subspace supplementary to the range of $I+\Tilde{E}_1(0)$; see Appendix~\ref{appendix} and \cite[Lemma~3.3, 3.4]{HNS} for details. In particular, both $\varphi_i$ and $\psi_i$ are smooth functions with compact supports. By the invertibility of the operator $I+\Tilde{E}(0)$ \cite[Lemma~3.5]{HNS} (also for $k\le k_0$ for $k_0$ small enough by continuity; see \cite[Lemma~3.1]{HNS}), we invert $I+\Tilde{E}(k)$ and write 
\begin{align*}
    (I+\Tilde{E}(k))^{-1} = I + S(k).
\end{align*}
Note that by construction, we obtain the exact formula for $(\Delta+k^2)^{-1}$ in the sense
\begin{align}\label{parametrix_R2}
    (\Delta+k^2)^{-1} &= \left(\sum_{j=1}^3 G_j(k) + \sum_{j=1}^{\mathcal{N}} \psi_i(\cdot) \langle \varphi_i, \cdot\rangle \right) (I+\Tilde{E}(k))^{-1}\\
    &:= \sum_{j=1}^3 G_j(k) + G_4(k),\quad \forall 0<k\le k_0,
\end{align}
where
\begin{align*}
    G_4(k) = \sum_{j=1}^3 G_j(k)S(k) + \sum_{j=1}^{\mathcal{N}} \psi_i(\cdot) \langle \varphi_i, \cdot\rangle + \sum_{j=1}^{\mathcal{N}} \psi_i(\cdot) \langle \varphi_i, S(k) \cdot \rangle.
\end{align*}

\subsection{Parametrix for the kernel of the Hodge projection}\label{sec2.2}

For simplicity, we write $z_{i}^0 = 0$. Without loss of generality, we assume $\textrm{supp}(\phi_i) \subset \{|z|\ge 1\}$ and $\textrm{supp}(d\phi_i) \subset \{E_i; 1\le |z|\le 2\}$. In this subsection, we construct the main part of the kernel of the perturbation of the Hodge projector:
\begin{align*}
    \mathcal{P}_k = d (\Delta + k^2)^{-1} \delta = \sum_{j=1}^4 d G_j(k) \delta,\quad \forall 0<k\le k_0\le 1/2.
\end{align*}
For main part, we mean the terms involving $G_1, G_3$, i.e., $dG_j(k)\delta$ for $j=1,3$. Let $\omega \in C_c^\infty(\Lambda^1(M))$ and $0<k\le k_0\le 1/2$. Define
\begin{align*}
    \mathcal{T}\omega(z) := dG_1(k)\delta \omega(z),\\
    \mathcal{S}\omega(z):= d G_3(k) \delta \omega(z).
\end{align*}
By the explicit formula \eqref{G1}, it suffices to consider each term in the summation. Note that 
\begin{align*}
    G_1(k) \delta\omega(z) =  \int G_1(k)(z,z') \delta \omega(z') dz'= \phi_i(z) \int (\Delta_i +k^2)^{-1}(z,z') \phi_i(z') \delta \omega(z') dz'.
\end{align*}
By integration by parts, one deduces that
\begin{align*}
    G_1(k) \delta\omega(z) &= \phi_i(z) \int d_{z'}(\Delta_i +k^2)^{-1}(z,z') \phi_i(z') \cdot \omega(z') dz'\\
    &+ \phi_i(z) \int (\Delta_i +k^2)^{-1}(z,z') d\phi_i(z') \cdot \omega(z') dz'\\
    &= \phi_i(z) \int (\Delta_i +k^2)^{-1}(z,z') \delta(\phi_i \omega)(z') dz'\\
    &+ \phi_i(z) \int (\Delta_i +k^2)^{-1}(z,z') d\phi_i(z') \cdot \omega(z') dz'.
\end{align*}
Indeed, the boundary term according to the integration over $\partial B(0,R)$ vanishes as $R\to \infty$ since $\omega$ has compact support. While for the "inner" boundary term (integration over $\partial B(z,\epsilon)$ as $\epsilon \to 0$) is bounded by (here, we only check the case where the $i$th end is parabolic at infinity, and the non-parabolic case is similar)
\begin{align*}
    C_\omega \int_{\partial B(z,\epsilon)} \left|1 + |\log{kd(z,z')}| \right| d\sigma(z') \le C_\omega \epsilon |\log{k\epsilon}| \to 0,
\end{align*}
as $\epsilon \to 0$ for each $0<k\le k_0$.

Therefore, 
\begin{align*}
    \mathcal{T}\omega(z) &= d\phi_i(z)\int d_{z'}(\Delta_i +k^2)^{-1}(z,z') \phi_i(z')\cdot \omega(z') dz' + \phi_i(z) \int d_{z}(\Delta_i +k^2)^{-1}(z,z') \delta(\phi_i \omega)(z') dz'\\
    &+ d\phi_i(z) \int (\Delta_i +k^2)^{-1}(z,z') d\phi_i(z') \cdot \omega(z') dz' + \phi_i(z) \int d_z(\Delta_i +k^2)^{-1}(z,z') d\phi_i(z') \cdot \omega(z') dz'\\
    &:= \mathcal{T}_1 \omega(z)+\mathcal{T}_2 \omega(z) + \mathcal{T}_3 \omega(z) + \mathcal{T}_4 \omega(z).
\end{align*}
Similarly, 
\begin{align*}
    &\mathcal{S}\omega(z) = \int du_i(z,k) (\Delta_i+k^2)^{-1}(0,z') \phi_i(z') \delta \omega(z') dz'\\
    &= du_i(z,k) \int d_{z'}(\Delta_i+k^2)^{-1}(0,z')  \phi_i(z') \cdot \omega(z') dz' + du_i(z,k) \int (\Delta_i+k^2)^{-1}(0,z')  d\phi_i(z') \cdot \omega(z') dz'\\
    &:= \mathcal{S}_1 \omega(z) + \mathcal{S}_2 \omega(z).
\end{align*}
Combing $\mathcal{T}$ and $\mathcal{S}$, one yields
\begin{align*}
    \mathcal{T}\omega + \mathcal{S}\omega &= \mathcal{T}_2\omega + \mathcal{T}_4\omega + (\mathcal{T}_1 + \mathcal{S}_1)\omega + (\mathcal{T}_3+\mathcal{S}_2)\omega\\
    &= \mathcal{T}_2\omega + \mathcal{T}_4\omega + \mathcal{R}_1 \omega + \mathcal{R}_2\omega + \mathcal{Q}_1\omega + \mathcal{Q}_2\omega,
\end{align*}
where
\begin{align*}
    \mathcal{R}_1\omega &= d\phi_i(z) \int  d_{z'}\left[(\Delta_i + k^2)^{-1}(z,z') - (\Delta_i + k^2)^{-1}(0,z')\right] \phi_i(z') \cdot \omega(z') dz',\\
    \mathcal{R}_2\omega &=\left[ d\phi_i(z) + du_i(z,k) \right] \int d_{z'}(\Delta_i + k^2)^{-1}(0,z') \phi_i(z') \cdot \omega(z') dz',\\
    \mathcal{Q}_1\omega &= d\phi_i(z) \int \left[ (\Delta_i + k^2)^{-1}(z,z') - (\Delta_i + k^2)^{-1}(0,z') \right] d\phi_i(z') \cdot \omega(z')dz',\\
    \mathcal{Q}_2\omega &= \left[ d\phi_i(z) + du_i(z,k)\right] \int (\Delta_i + k^2)^{-1}(0,z') d\phi_i(z')\cdot \omega(z') dz'.
\end{align*}

\section{On the crucial part of the parametrix construction }\label{sec3}

We begin by considering the operators $\mathcal{R}_2$ and $\mathcal{Q}_2$. The contributions of $\mathcal{R}_1$ and $\mathcal{Q}_1$ will be discussed later. In the proof, we will make use of the following lemma.

\begin{lemma}\label{lemma_1}
Let $M$ be a manifold defined by \eqref{M1}. Let $1<p<\infty$ and $\omega \in C_c^\infty(\Lambda^1(M))$. We have
\begin{align}
    \lim_{k\to 0}\left( |\langle \mathcal{R}_2\omega, (I-\mathcal{H})\nu \rangle| 
    +|\langle \mathcal{Q}_2 \omega, (I-\mathcal{H})\nu \rangle|
    \right)
    = 0,
\end{align}
where $\nu \in C_c^\infty(\Lambda^1(M)) \subset L^2\cap L^{p'}(\Lambda^1(M))$.

Let $M$ be a manifold defined by \eqref{M2} or \eqref{M3}. Let $1<p<\infty$ and $\omega \in C_c^\infty(\Lambda^1(M))$. We have
\begin{align}
    \lim_{k\to 0} |\langle \mathcal{R}_2\omega, (I-\mathcal{H})\nu \rangle|
    = 0,
\end{align}
where $\nu \in C_c^\infty(\Lambda^1(M)) \subset L^2\cap L^{p'}(\Lambda^1(M))$.
 
\end{lemma}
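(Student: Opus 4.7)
The plan is to exploit the clean algebraic structure of $\mathcal{R}_2\omega$ and $\mathcal{Q}_2\omega$: each factors as a $z$-dependent $1$-form times a scalar. Writing $\phi_{i,k}(z):=\phi_i(z)+u_i(z,k)$, we have
$$\mathcal{R}_2\omega(z)=d\phi_{i,k}(z)\cdot C_R(k,\omega), \qquad \mathcal{Q}_2\omega(z)=d\phi_{i,k}(z)\cdot C_Q(k,\omega),$$
where $C_R(k,\omega):=\int d_{z'}(\Delta_i+k^2)^{-1}(0,z')\,\phi_i(z')\cdot\omega(z')\,dz'$ and $C_Q(k,\omega)$ is the analogous scalar obtained from $\mathcal{Q}_2$. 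The first step is to use the gradient resolvent estimate \eqref{eq_gradient_reso}, the compact support of $\phi_i\omega$ (resp.\ of $d\phi_i$), and the local integrability of the Bessel kernel $K_0$ in the $\mathbb{R}^2$ factors, to show $|C_R(k,\omega)|+|C_Q(k,\omega)|\le C(\omega)$ uniformly for $0<k\le k_0$.

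The second step is to identify the limit of the $1$-form factor $d\phi_{i,k}$ as $k\to 0$. Since $v_i=-\Delta\phi_i$ and $(\Delta+k^2)u_i-v_i$ is either identically zero (case \eqref{M1}) or a negligible error (cases \eqref{M2}, \eqref{M3}) by Lemmata~\ref{keylemma_HS}--\ref{keylemma}, a direct calculation gives $(\Delta+k^2)\phi_{i,k}=k^2\phi_i+\mathrm{err}(z,k)$. Thus $\phi_{i,0}$ is harmonic, and in case \eqref{M1} Theorem~\ref{thm0} identifies it with the escape-probability function $h_i$, so $d\phi_{i,k}\to dh_i\in\mathcal{H}_{L^2}^1(M)$. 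In cases \eqref{M2}, \eqref{M3} Theorem~\ref{thm0} gives $\mathcal{H}_{L^2}^1(M)=\{0\}$, and the explicit bound $|d\phi_{i,k}(z)|=O(\operatorname{ilg}(k)\,|z|^{-1})$ from Lemmata~\ref{keylemma_HNS}--\ref{keylemma} shows $d\phi_{i,k}\to 0$ pointwise, with $\phi_{i,0}$ itself constant on the connected manifold.

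The third step is to pass the limit through the bilinear pairing. In case \eqref{M1}, the pointwise bound $|du_i(z,k)|\le C|z|^{1-n_i}e^{-ckd(0,z)}$ from Lemma~\ref{keylemma_HS} supplies a $k$-independent $L^2$-majorant, so dominated convergence yields $d\phi_{i,k}\to dh_i$ in $L^2(\Lambda^1(M))$. Since $(I-\mathcal{H})\nu\in L^2$, Cauchy--Schwarz and self-adjointness of $\mathcal{H}$ give
$$\langle d\phi_{i,k},(I-\mathcal{H})\nu\rangle\longrightarrow \langle dh_i,(I-\mathcal{H})\nu\rangle=\langle(I-\mathcal{H})dh_i,\nu\rangle=0,$$
using $dh_i\in\mathcal{H}_{L^2}^1(M)$. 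In cases \eqref{M2}, \eqref{M3} we have $(I-\mathcal{H})\nu=\nu$ compactly supported, so the cleanest route is the integration-by-parts identity $\langle d\phi_{i,k},\nu\rangle=\langle \phi_{i,k},\delta\nu\rangle$: pointwise convergence of $\phi_{i,k}$ on $\operatorname{supp}(\delta\nu)$ to a constant, together with $\int_M\delta\nu\,dz=0$, forces the pairing to $0$.

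The main obstacle lies in the parabolic cases \eqref{M2} and \eqref{M3}: the pointwise bound $|d\phi_{i,k}|=O(\operatorname{ilg}(k)|z|^{-1})$ is only borderline $L^2$ at infinity on an $\mathbb{R}^2$ factor, so a straight $L^2$-convergence argument is delicate. Reducing to a pointwise statement via integration by parts, and making essential use of the compact support of $\nu$ (which is guaranteed by $\mathcal{H}=0$ here), is the cleanest way around this issue. A secondary technical point is the local integrability at $z'=0$ of $d_{z'}(\Delta_i+k^2)^{-1}(0,z')$ in the $\mathbb{R}^2$ ends, which follows from the logarithmic nature of $K_0$ and is needed for the uniform bound in Step~1.
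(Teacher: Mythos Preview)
Your approach and the paper's both exploit the rank-one factorization and both bound the scalar factors $C_R,C_Q$ uniformly in $k$ (for the cases needed); the divergence is in how the pairing $\langle d\phi_{i,k},(I-\mathcal{H})\nu\rangle$ is shown to vanish. The paper approximates $(I-\mathcal{H})\nu$ by $d\xi+\delta\eta$ with $\xi,\eta$ compactly supported (via the $L^2$-Hodge decomposition plus the density argument in Remark~\ref{remark_density}), then integrates by parts to get $\int(-k^2u_i+\mathrm{err})\,\xi\,dz\to 0$; this is uniform across \eqref{M1}--\eqref{M3} with no topological hypothesis. Your \eqref{M1} route---$d\phi_{i,k}\to d\phi_{i,0}\in\mathcal{H}_{L^2}^1(M)$ in $L^2$ via DCT, then orthogonality with $(I-\mathcal{H})\nu$---is a genuinely different and arguably cleaner argument (no density step), and it is correct; note you do not need Theorem~\ref{thm0} to identify the limit as $dh_i$, only that $\phi_{i,0}$ is harmonic with $d\phi_{i,0}\in L^2$, which already places $d\phi_{i,0}$ in $\mathcal{H}_{L^2}^1(M)$.

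There is, however, a genuine gap in your treatment of \eqref{M2}. You invoke Theorem~\ref{thm0} to conclude $\mathcal{H}=0$, hence $(I-\mathcal{H})\nu=\nu$ with compact support, and then run the integration by parts $\langle d\phi_{i,k},\nu\rangle=\langle\phi_{i,k},\delta\nu\rangle$. But for manifolds of type \eqref{M2}, Theorem~\ref{thm0} requires $\pi_1(M)=\{0\}$, which is \emph{not} a hypothesis of Lemma~\ref{lemma_1}. Without compact support of the test form your DCT step on $\operatorname{supp}(\delta\nu)$ is unavailable. The paper's approach sidesteps this entirely because the compactly supported data $\xi,\eta$ come from the Hodge decomposition of $(I-\mathcal{H})\nu$, regardless of whether $\mathcal{H}_{L^2}^1(M)$ is trivial. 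A way to repair your own argument without the $\pi_1$ assumption: show (as in Remark~\ref{remark_density}) that $\|d\phi_{i,k}\|_{L^2}\le C$ uniformly for $0<k\le k_0$; combined with the pointwise convergence $d\phi_{i,k}\to 0$ this yields weak $L^2$-convergence, and since $(I-\mathcal{H})\nu\in L^2$ the pairing vanishes in the limit with no hypothesis on $\mathcal{H}$.
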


\begin{proof}

In what follows, we apply Lebesgue dominated convergence theorem. We first consider the case $n_*\ge 3$. In this case, both $|(\Delta_i+k^2)^{-1}(0,z')|$ and $|d_{z'}(\Delta_i+k^2)^{-1}(0,z')|$ converge uniformly as $k\to 0$, and we have equality:
\begin{align*}
    (\Delta+k^2)u_i = -\Delta \phi_i.
\end{align*}
It suffices to show the following bilinear forms:
\begin{align}\label{A}
    &\mathcal{A} =\langle \mathcal{R}_2\omega, (I-\mathcal{H})\nu \rangle \\ \nonumber
    &=\int \left[ d\phi_i(z) + du_i(z,k) \right] \int d_{z'}(\Delta_i+k^2)^{-1}(0,z') \phi_i(z') \cdot \omega(z') dz' \cdot (I-\mathcal{H})\nu(z) dz\\ \nonumber
    &= \left(\int \left[ d\phi_i(z) + du_i(z,k) \right] \cdot (I-\mathcal{H})\nu(z) dz \right) \left(\int d_{z'}(\Delta_i+k^2)^{-1}(0,z') \phi_i(z') \cdot \omega(z') dz' \right),
\end{align}
and
\begin{align}\label{B}
    &\mathcal{B} = \langle \mathcal{Q}_2 \omega, (I-\mathcal{H})\nu \rangle \\ \nonumber
    &= \int \left[ d\phi_i(z) + du_i(z,k) \right] \int (\Delta_i+k^2)^{-1}(0,z') d\phi_i(z') \cdot \omega(z') dz' \cdot (I-\mathcal{H})\nu(z) dz\\ \nonumber
    &= \left(\int \left[ d\phi_i(z) + du_i(z,k) \right] \cdot (I-\mathcal{H})\nu(z) dz \right) \left(\int (\Delta_i+k^2)^{-1}(0,z') d\phi_i(z') \cdot \omega(z') dz' \right),
\end{align}
both tend to zero as $k\to 0$. 

We first treat the integral $\mathcal{I} = \int \left[ d\phi_i(z) + du_i(z,k) \right] \cdot (I-\mathcal{H})\nu(z) dz$. By $L^2$-Hodge decomposition; see \cite{KK},
\begin{align*}
    L^2(\Lambda^1(M)) = \overline{d C_c^\infty(M)}^{L^2} \oplus \overline{\delta C_c^\infty(\Lambda^2(M))}^{L^2} \oplus \mathcal{H}_{L^2}^1(M),
\end{align*}
where $\mathcal{H}_{L^2}^1(M)$ refers to the space of $L^2$ harmonic 1-form on $M$. Without loss of generality, we assume that $(I-\mathcal{H})\nu = d\xi + \delta \eta$ for some $\xi \in C_c^\infty(M)$, $\eta \in C_c^\infty(\Lambda^2(M))$; a density argument will be given in Remark~\ref{remark_density} below. Note that this argument  works for  $\mathcal{B}$ only in the setting of \eqref{M1}. Plugging this decomposition into the integral, we obtain that 
\begin{align*}
\mathcal{I} = \int \left[ d\phi_i(z) + du_i(z,k) \right] \cdot d\xi(z) \, dz 
    + \int d\left[ \phi_i(z) + u_i(z,k) \right] \cdot \delta\eta(z) \, dz
    \\= \int \left[ \Delta\phi_i(z) + \Delta u_i(z,k) \right] \xi(z) \, dz 
    = \int -k^2 u_i(z,k) \, \xi(z) \, dz.
\end{align*}
Meanwhile for the second multiplicand of $\mathcal{A}$, one notes
\begin{align}\label{eq_Lemma_1}
    \left| \int d_{z'}(\Delta_i+k^2)^{-1}(0,z') \phi_i(z') \cdot \omega(z') dz' \right| &\le C \int_{|z'|\ge 1} |z'|^{1-n_i}{|\omega(z')|} dz' \le C \|\omega\|_1 <\infty
\end{align}
since $\omega \in C_c^\infty(\Lambda^1(M))$. Similarly, the second multiplicand of $\mathcal{B}$ can be estimated by
\begin{align}\label{Holder2}
    \left|\int (\Delta_i+k^2)^{-1}(0,z') d\phi_i(z') \cdot \omega(z') dz' \right| &\le C \int_{1\le |z'|\le 2} |z'|^{2-n_i} |\omega(z')| dz'\le C \|\omega\|_1<\infty.
\end{align}
Since $|u_i(z,k)|$ is uniformly bounded, the integrands of $\mathcal{A}$ and $\mathcal{B}$ are pointwise bounded by
\begin{align*}
    C k^2 \|\omega\|_1  |\xi(z)| \to 0
\end{align*}
as $k \to 0$ a.e. Moreover, for $k\le k_0$
\begin{align*}
    |\mathcal{A}|+|\mathcal{B}| \le C_\omega \int |\xi(z)| dz < \infty 
\end{align*}
since $\xi \in C_c^\infty(M)$. By Lebesgue dominated convergence theorem, we conclude that
\begin{align*}
    \mathcal{A} +\mathcal{B} \to 0, \quad \textrm{as $k\to 0$ }.
\end{align*}
This verifies Lemma \ref{lemma_1} for the case  $n_*\ge 3$ i.e. $M$ defined by \eqref{M1}.

Next, we consider $\mathcal{R}_2$ acting on $M=\mathbb{R}^2 \# \mathbb{R}^2$ or $M=(\mathbb{R}^2 \times M_1) \# (\mathbb{R}^{n_2}\times M_2)$. That is we show that $\mathcal{A}\to 0$ as $k\to 0$. The proof is similar to the case $n_*\ge 3$. First, $d_{z'}(\Delta_i+k^2)^{-1}(0,z')$ still converges as $k\to 0$. Second, $u_i(z,k)$ is an approximate solution in the sense:
\begin{align*}
    (\Delta+k^2)u_i + \Delta \phi_i = O\left(\textrm{ilg}(k)^q |z|^{-\infty}\right), \quad q\ge 2.
\end{align*}
Therefore, 
\begin{align*}
    \int \left[ d\phi_i(z) + du_i(z,k) \right] \cdot (I-\mathcal{H})\nu(z) dz = \int \left[ -k^2 u_i(z,k) + O\left(\textrm{ilg}(k)^q |z|^{-\infty}\right) \right]  \xi(z) dz.
\end{align*}
Now, \eqref{eq_Lemma_1} still holds since $|d_{z'}(\Delta_1+k^2)^{-1}(0,z')|$ converges. Thus, the integrand of $\mathcal{A}$ is pointwise bounded by
\begin{align*}
    &C \|\omega\|_1 \left(k^2|u_i(z,k)| + \textrm{ilg}(k)^q |z|^{-\infty} \right) |\xi(z)| \to 0
\end{align*}
as $k \to 0$, since again $u_i(z,k)$ is uniformly bounded. Moreover, for $k\le k_0$
\begin{align*}
    |\mathcal{A}| \le C_\omega \int (1+|z|^{-\infty}) |\xi(z)| dz < \infty,
\end{align*}
completing the proof.

\end{proof}

\begin{remark}\label{remark_density}
In this remark, we give a supplementary density argument for the proof of Lemma~\ref{lemma_1}. Recall by $L^2$-Hodge decomposition,
\begin{align*}
    L^2(\Lambda^1(M)) = \overline{d C_c^\infty(M)}^{L^2} \oplus \overline{\delta C_c^\infty(\Lambda^2(M))}^{L^2} \oplus \mathcal{H}_{L^2}^1(M).
\end{align*}
Let $\nu\in C_c^\infty(\Lambda^1(M))$ and $\epsilon >0$. There exist $\xi \in C_c^\infty(M)$, $\eta \in C_c^\infty(\Lambda^2(M))$ such that $\|d\xi - \mathcal{P}\nu\|_2 \le \epsilon /2$ and $\|\delta \eta - \delta \Delta_2^{-1} d\nu\|_2 \le \epsilon /2$. Therefore,
\begin{align*}
|\langle d\phi_i+du_i, (I-\mathcal{H})\nu \rangle| &\le |\langle d\phi_i+du_i, \mathcal{P}\nu \rangle| + |\langle d\phi_i+du_i, \delta \Delta_2^{-1}d\nu \rangle|\\
    &\le |\langle d\phi_i+du_i, d\xi \rangle| + |\langle d\phi_i+du_i, \delta \eta \rangle| + \epsilon \|d\phi_i+du_i\|_2.
\end{align*}
Observe that $\|d\phi_i+du_i\|_2 \le C$ uniformly for all $0<k\le k_0$. Indeed, for the case $n_* \ge 3$, the uniform $L^2$-boundedness of $du_i(\cdot,k)$ is obvious since $|du_i(z,k)|\le C |z|^{1-n_i} \in L^2$ for all $k\le k_0$; see Lemma~\ref{keylemma_HS}. While for the case $M$ defined by \eqref{M2} or \eqref{M3}, the $L^2$ norm on the non-parabolic end can be treated as before and for the parabolic end $E_1$, we have by Lemma~\ref{keylemma_HNS}, and \ref{keylemma} that 
\begin{align*}
    \int_{E_1} |d\phi_i+du_i(z,k)|^2 dz &\le C \|d\phi_i\|_2^2 + C\int_{E_1} \left(|z|^{-2} + \textrm{ilg}(k) |z|^{-1}\right)^2 e^{-ck|z|} dz\\
    &\le C+ C \textrm{ilg}(k)^2 \int_1^\infty r^{-2} e^{-ckr} rdr\\
    &\le C+ C (\textrm{ilg}(k))^2 \int_{k}^1  e^{-cs} \frac{ds}{s} + C (\textrm{ilg}(k))^2 \\
     &\le C+ C \textrm{ilg}(k) \le C,
\end{align*}
for all $0<k\le k_0$. This implies that
\begin{align*}
     |\langle d\phi_i+du_i, (I-\mathcal{H})\nu \rangle| \le |\langle d\phi_i+du_i, d\xi \rangle| + |\langle d\phi_i+du_i, \delta \eta \rangle| + C\epsilon.
\end{align*}
Now, by \eqref{eq_Lemma_1} and \eqref{Holder2}, one obtains estimates that for $M$ defined by \eqref{M1},
\begin{align*}
    \lim_{k\to 0} |\mathcal{A}| + |\mathcal{B}| \le C \epsilon \|\omega\|_1,
\end{align*}
and for $M$ defined by \eqref{M2} or \eqref{M3}
\begin{align*}
    \lim_{k\to 0} |\mathcal{A}| \le C \epsilon \|\omega\|_1.
\end{align*}
The results follow by letting $\epsilon \to 0$.

\end{remark}

\begin{remark}
We treat the analysis of $\mathcal{Q}_2$ on $M$ specified by \eqref{M2}–\eqref{M3} separately (see Lemma~\ref{remedy} below)—not because Lemma~\ref{lemma_1} is inapplicable, but because the density issue in Remark~\ref{remark_density} requires special care. Recall that
\begin{align}\label{remark2}
\langle \mathcal{Q}_2 \omega, (I-\mathcal{H})\nu \rangle
= \left(\int \bigl[d\phi_i(z)+du_i(z,k)\bigr]\cdot (I-\mathcal{H})\nu(z)\,dz\right)
  \left(\int (\Delta_i+k^2)^{-1}(0,z')\, d\phi_i(z')\cdot \omega(z')\,dz' \right).
\end{align}
By Remark~\ref{remark_density}, the first factor satisfies
\[
\Bigl|\langle d\phi_i+du_i, (I-\mathcal{H})\nu \rangle\Bigr|
\le \bigl|\langle d\phi_i+du_i, d\xi \rangle\bigr|
       + \bigl|\langle d\phi_i+du_i, \delta\eta \rangle\bigr| + C\varepsilon
\le C\varepsilon,
\]
uniformly as $k\to 0$, and hence can be made arbitrarily small by the choice in the density approximation. However, $(\Delta_i+k^2)^{-1}(0,z')$ fails to converge as $k\to 0$; the second factor in \eqref{remark2} grows like $O(|\log k|)$ and therefore diverges as $k\to 0$. In particular, the $C\varepsilon$–smallness afforded by the density argument cannot compensate for this logarithmic blow-up. This necessitates a finer analysis of the behaviour of $d\phi_i(z)+du_i(z,k)$ in the small-$k$ regime.

It is also worth noting that, if a strong-type $L^2$ Hodge decomposition holds on $M$ (see \cite{Gromov}), namely
\[
L^2(\Lambda^1(M))
= d W^{1,2}(M)\;\oplus\; \delta W^{1,2}(\Lambda^2(M))\;\oplus\; \mathcal{H}_{L^2}^1(M),
\]
then the proof of Lemma~\ref{lemma_1} simplifies: the density step from Remark~\ref{remark_density} becomes unnecessary.
\end{remark}

Next, we focus on $\mathcal{Q}_2$ acting on $M$ defined by \eqref{M2}, \eqref{M3}. Note that the contribute of the left varibale of $\mathcal{Q}_2(z,z')$ pointwise tends to zero as $k\to 0$. To see this, by Lemma~\ref{keylemma_HNS}, we know that $u_i(z,0)$ is a bounded solution to the equation: $\Delta u = - \Delta\phi_i$. Therefore, it is clear that $\phi_i(z) + u_i(z,0)$ is a bounded global harmonic function on $M$. However, it follows by \cite[Lemma~2.8, 2.10]{HNS} that the only bounded harmonic function on $M$ is constant, and hence the derivative of $\phi_i(z) + u_i(z,k)$ tends to zero as $k\to 0$. Next, for the integral $\int (\Delta_i + k^2)^{-1}(0,z') d\phi_i(z')\cdot \omega(z') dz'$, it is clear that it blows up with order $|\log{k}|$ as $k\to 0$. So, the question turns to: does the decay in $k$ from the left variable of $\mathcal{Q}_2(z,z')$ near zero fast enough to kill the $|\log{k}|$ from the integral? We answer this question in the following lemma.

\begin{lemma}\label{remedy}
Let $M$ be defined by \eqref{M2} or \eqref{M3}. Then for any $\omega, \nu \in C_c^\infty(\Lambda^1(M))$, we have
\begin{align*}
    \left|\int \mathcal{Q}_2\omega(z) \cdot \nu(z) dz \right| \le C \|\omega\|_{p'} \|\nu\|_p
\end{align*}
for $1<p<2$ and all $0<k\le k_0$.

\end{lemma}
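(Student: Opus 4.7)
The plan is to exploit the rank-one structure of $\mathcal{Q}_2$ per end and to trade the logarithmic blow-up of the low-energy resolvent against the $\operatorname{ilg}(k)$ decay of the combined cutoff-correction gradient. I would first write
\begin{align*}
\int \mathcal{Q}_2 \omega \cdot \nu \, dz \;=\; \sum_i A_i(\nu, k)\, B_i(\omega, k),
\end{align*}
where $A_i(\nu,k) = \int [d\phi_i + du_i]\cdot \nu\, dz$ and $B_i(\omega,k) = \int (\Delta_i + k^2)^{-1}(0, z')\, d\phi_i(z')\cdot \omega(z')\, dz'$. Applying Hölder's inequality to each factor separately reduces the claim to showing that the product $\|d\phi_i + du_i\|_{p'}\cdot \|(\Delta_i + k^2)^{-1}(0,\cdot) d\phi_i\|_{p}$ is bounded uniformly in $k$.

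For $B_i$, only the local behaviour of the resolvent kernel on the annulus supporting $d\phi_i$ matters. On a parabolic end (both ends of \eqref{M3} and the first end of \eqref{M2}), the $\mathbb{R}^2$-component of the resolvent yields $(\Delta_i+k^2)^{-1}(0,z')\le C(1+|\log(k\,d(0,z'))|)$, so $|B_i|\le C|\log k|\,\|\omega\|_{p'}$. On the non-parabolic end of \eqref{M2}, the kernel is uniformly bounded, hence $|B_i|\le C\|\omega\|_{p'}$.

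The crux is the estimate $\|d\phi_i + du_i\|_{p'}\le C\,\operatorname{ilg}(k)$ on parabolic ends. I would invoke the combined gradient estimates \eqref{deri1}/\eqref{deri2}, which give $|\nabla(\phi_i+u_i)|\le C\,\operatorname{ilg}(k)\,|z|^{-1}$ on the parabolic end $E_i$, together with the exponential decay from Lemma~\ref{keylemma_HNS}/\ref{keylemma}. In dimension two the function $|z|^{-1}$ lies in $L^{p'}$ at infinity precisely when $p'>2$; this is the origin of the restriction $1<p<2$ in the statement, and it yields $\|d\phi_i + du_i\|_{p'(E_i)}\le C\,\operatorname{ilg}(k)$. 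On the complementary region (the compact core and, for \eqref{M2}, the other end), I would use the identity
\begin{align*}
(\Delta + k^2)(\phi_i + u_i) \;=\; k^2\phi_i + \text{(small error)},
\end{align*}
which follows from $\Delta\phi_i=-v_i$ and $(\Delta+k^2)u_i=v_i + \text{(error)}$. Inverting and noting that $\phi_i$ vanishes away from $E_i$ while the resolvent decays polynomially toward a non-parabolic end, one obtains $\|d(\phi_i+u_i)\|_{p'(\text{off }E_i)}=O(k^2)$, which is negligible beside $\operatorname{ilg}(k)$.

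Combining these pieces yields $|A_i B_i|\le C\,\operatorname{ilg}(k)\,|\log k|\,\|\omega\|_{p'}\|\nu\|_p = C\,\|\omega\|_{p'}\|\nu\|_p$, using the defining identity $\operatorname{ilg}(k)\,|\log k|=1$; summing over $i$ and incorporating the uniformly bounded non-parabolic contribution completes the proof. I expect the main obstacle to be the refined $O(k^2)$ control on $d\phi_i+du_i$ away from $E_i$: the point-wise bound $O(k\,|z|^{-1})$ provided directly by \eqref{deri1} is too weak in larger-dimensional non-parabolic ends, so extracting the sharper rate requires the resolvent-identity argument above, ensuring that the global $L^{p'}$ norm of $d\phi_i+du_i$ does not lose the critical factor $\operatorname{ilg}(k)$.
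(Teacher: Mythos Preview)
Your approach is essentially the paper's: factor $\mathcal{Q}_2$ as a rank-one operator per end, bound $B_i$ by $C|\log k|\,\|\omega\|_{p'}$ on parabolic ends via the logarithmic resolvent asymptotics on the annulus $\operatorname{supp}(d\phi_i)$, bound $A_i$ via the gradient estimates \eqref{deri1}/\eqref{deri2}, and close with $\operatorname{ilg}(k)\,|\log k|=1$. The restriction $p'>2$ enters exactly where you place it, namely the $L^{p'}$-integrability of $|z|^{-1}$ on a two-dimensional end.

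You are right to flag the non-parabolic end $E_2$ of \eqref{M2} as a genuine obstacle; the paper dismisses it with ``similarly'', which is too quick because $|z|^{-1}\notin L^{p'}(E_2)$ when $2<p'\le n_2$. However, your proposed repair via the resolvent identity is flawed. The expression $(\Delta+k^2)^{-1}[k^2\phi_i]$ does \emph{not} yield $O(k^2)$: the cutoff $\phi_i$ equals $1$ at infinity on the parabolic end $E_i$, and the $L^1$-mass of the resolvent kernel over that end is $\int_{E_i}(\Delta_i+k^2)^{-1}(0,z')\,dz'\sim k^{-2}$, which exactly cancels the $k^2$ prefactor. So after ``inverting'' you recover only an $O(1)$ bound, not $O(k^2)$, and the $|\log k|$ from $B_i$ is not absorbed.

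The correct and much simpler fix is to interpolate the two pointwise bounds already at hand on $E_2$. From \eqref{deri1} one has $|d(\phi_i+u_i)|\le Ck|z|^{-1}$, while Lemma~\ref{keylemma_HNS} together with $\phi_i\equiv 0$ on $E_2$ gives $|d(\phi_i+u_i)|=|du_i|\le C|z|^{1-n_2}$. Hence for any $\theta\in(0,1)$,
\[
|d(\phi_i+u_i)|\;\le\;C\,(k|z|^{-1})^{\theta}\,(|z|^{1-n_2})^{1-\theta}
\;=\;C\,k^{\theta}\,|z|^{\,1-n_2+\theta(n_2-2)}.
\]
The right-hand side lies in $L^{p'}(E_2)$ whenever $p'>n_2/\bigl(1+\theta(n_2-2)\bigr)$, which for $\theta>0$ small is implied by $p'>2>n_2'$. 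This yields $\|d(\phi_i+u_i)\|_{L^{p'}(E_2)}=O(k^{\theta})$, and since $k^{\theta}|\log k|\le C$ uniformly on $(0,k_0]$, the product $|A_i\,B_i|$ is bounded by $C\|\omega\|_{p'}\|\nu\|_p$ as required.
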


\begin{proof}
We only consider $M$ defined by \eqref{M2} since the proof for $M$ defined by \eqref{M3} is similar. First, by Hölder's inequality, it is easy to check that if $E_i$ is parabolic, then
\begin{align}\label{Holder}
    \left|\int (\Delta_i+k^2)^{-1}(0,z') d\phi_i(z') \cdot \omega(z') dz' \right| &\le C \int \left(1 + \left|\log{k|z'|} \right|\right) e^{-ck|z'|} \chi_{1\le |z'|\le 2}(z') |\omega(z')| dz'\\ \nonumber
    &\le C \|\omega\|_{p'} \left(\int_1^2 \left(1 + \left|\log{kr}\right|\right)^{p} e^{-ckr} rdr  \right)^{1/p}\\ \nonumber
    &\le C \|\omega\|_{p'} \left( k^{-2} \int_k^{2k} |\log{s}|^{p} e^{-s}  s ds \right)^{1/p}\\ \nonumber
    &\le C |\log{k}| \|\omega\|_{p'}.
\end{align}
Hence, by \eqref{Holder2}, one deduces
\begin{align*}
    \left|\int (\Delta_i+k^2)^{-1}(0,z') d\phi_i(z') \omega(z') dz'\right| \le C\begin{cases}
        |\log{k}| \|\omega\|_{p'}, & E_i\quad  \textrm{is parabolic},\\
        \|\omega\|_{p'}, & E_i \quad  \textrm{is non-parabolic}.
    \end{cases}
\end{align*}
From Lemma~\ref{keylemma_HNS} and Lemma~\ref{keylemma}, one has 
\begin{align*}
    |d(\phi_i(z) + u_i(z,k))|\le C \textrm{ilg}(k)|z|^{-1} \chi_{E_1}(z) + Ck|z|^{-1}.
\end{align*}
Directly, 
\begin{align*}
    \left|\int_{E_1} \mathcal{Q}_2\omega(z) \cdot \nu(z) dz \right| \le C\int_{E_1} \textrm{ilg}(k)|z|^{-1} |\log{k}| \, \|\omega\|_{p'} |\nu(z)| dz &\le C \|\omega\|_{p'} \|\nu\|_p \left( \int_1^\infty r^{-p'} rdr \right)^{1/p'}\\
    &\le C \|\omega\|_{p'} \|\nu\|_p
\end{align*}
since $1<p<2$, i.e. $p'>2$. The integral $ \int_{E_2\cup K}$ can be treated similarly.

\end{proof}

\begin{remark}
We mention that Lemma~\ref{lemma_1} in a sense explains why $\mathcal{P}$ can be bounded on $L^p$ for $1<p<\infty$ on $\mathbb{R}^2 \# \mathbb{R}^2$. Indeed, if we consider $\mathbb{R}^n \# \mathbb{R}^n$ for $n\ge 3$, the situation would be different. Note that for $n\ge 3$, the space of $L^2$-harmonic 1-form is no longer trivial, i.e. for $\nu \in L^2(\Lambda^1(M))$, we should assume (the density argument, Remark~\ref{remark_density}, is still applicable)
\begin{align*}
    \nu = d\xi + \delta \eta + \gamma,
\end{align*}
where $\gamma \in L^2(\Lambda^1(M))$ and $d\gamma = \delta \gamma = 0$, and this $\gamma$ term does not annihilate $d\phi_i(z) + du_i(z,k)$ when $k\to 0$. In fact, Theorem~\ref{thm0} guarantees that $\gamma = dh$, where $h \in C^\infty(M)$ is the bounded harmonic function on $M$. Moreover, by Lemma~\ref{keylemma_HS}, $h(z) \to 1$ as $z$ tends to the infinity of one end, say $E_1$ (and tend to 0 at the infinity of another end) and $|dh(z)|\le C|z|^{1-n}$. Then we have integration by parts
\begin{align*}
    \int d [\phi_i(z) + u_i(z,k)] \cdot \gamma(z) dz &= \int [\phi_i(z) + u_i(z,k)] \Delta h(z) dz - \lim_{R\to \infty} \int_{\partial B(o,R)} [\phi_i(z) + u_i(z,k)] \frac{\partial}{\partial \nu}h(z) d\sigma\\
    &= \int \Delta[\phi_i(z) + u_i(z,k)] h(z) dz - \lim_{R\to \infty} \int_{\partial B(o,R)} \frac{\partial}{\partial \nu}[\phi_i(z) + u_i(z,k)] h(z) d\sigma.
\end{align*}
There is no doubt that the main terms above both tend to 0 as $k\to 0$. However, the boundary terms do not vanish (for $R$ large):
\begin{align*}
    &\int_{\partial B(o,R) \cap E_i} \left|\phi_i(z) + u_i(z,k)\right| \left|\frac{\partial}{\partial \nu}h(z)\right| d\sigma \sim  \int_{\partial B(o,R) \cap E_i} |z|^{1-n} d\sigma \sim R^{1-n} R^{n-1} = 1,\\
    &\int_{\partial B(o,R)\cap E_1} \left|\frac{\partial}{\partial \nu}[\phi_i(z) + u_i(z,k)]\right| |h(z)| d\sigma \sim  \int_{\partial B(o,R) \cap E_1} |z|^{1-n} d\sigma \sim 1.
\end{align*}
Therefore, we need to directly analyze the $L^p$-boundedness of the following operator:
\begin{align}\label{4}
    \omega \mapsto \left[ d\phi_i(z) + du_i(z,k) \right] \int d_{z'}(\Delta_i + k^2)^{-1}(0,z') \phi_i(z') \cdot \omega(z') dz'.
\end{align}
Note that for all $k$ small
\begin{align*}
    \left|\int d_{z'}(\Delta_i + k^2)^{-1}(0,z') \phi_i(z') \cdot \omega(z') dz'\right| \le C \|\omega\|_p
\end{align*}
only if 
\begin{align*}
    \int_1^\infty \left|d_{z'}(\Delta_i + k^2)^{-1}(0,z') \right|^{p'} r^{n-1} dr \le C \int_1^\infty r^{(1-n)p'+n-1} dr < \infty,
\end{align*}
i.e. $1<p<n$. Meanwhile,
\begin{align*}
    \|d\phi_i(z) + du_i(z,k)\|_p^p \le C \int_1^\infty \left(|z|^{-\infty} + |z|^{1-n} \right)^p r^{n-1} dr \le C \int_1^\infty r^{(1-n)p+n-1} dr <\infty
\end{align*}
only if $n'<p<\infty$. This implies that the rank one operator defined by \eqref{4} is bounded in $L^p$ for $n'<p<n$ uniformly in $k$, and this range is precisely the range of boundedness of $\mathcal{P}$ acting on $\mathbb{R}^n \# \mathbb{R}^n$ for $n\ge 3$; see \cite[Corollary~3.6]{He2}.

\end{remark}

\begin{lemma}\label{lem3.4}
Let $M$ be a manifold defined by \eqref{M1}, \eqref{M2} or \eqref{M3}. The operator
\begin{align*}
    \mathcal{T}_2: \omega \mapsto \phi_i(z) \int d_{z}(\Delta_i + k^2)^{-1}(z, z')\, \delta(\phi_i \omega)(z')\, dz'
\end{align*}
is bounded on $L^p(\Lambda^1(M))$ for all $1 < p < \infty$, uniformly in $k$ for all $k \le k_0$.
\end{lemma}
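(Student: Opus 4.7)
The plan is to rewrite the operator as
\[
\mathcal{T}_2\omega \;=\; \phi_i \cdot d(\Delta_i+k^2)^{-1}\delta(\phi_i\omega),
\]
exploiting that $d_z$ pulls outside the $z'$-integral. Since multiplication by the bounded function $\phi_i$ is trivially $L^p$-bounded, and since the support of $\phi_i\omega$ lies in the end $E_i$, which is isometric to the complement of a compact set in $\mathcal{V}_i = \mathbb{R}^{n_i}\times M_i$ (or $\mathbb{R}^2$ in case \eqref{M3}), one may identify $\phi_i\omega$ with a compactly supported $1$-form on $\mathcal{V}_i$ and extend trivially by zero. The problem then reduces to establishing that
\[
T_k \;:=\; d(\Delta_i+k^2)^{-1}\delta
\]
is bounded on $L^p(\Lambda^1(\mathcal{V}_i))$ with norm uniform in $k\in[0,k_0]$.

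For the $L^2$-bound, I would factor $T_k = R_k R_k^*$, where $R_k := d(\Delta_i+k^2)^{-1/2}$ is the shifted Riesz transform on $\mathcal{V}_i$. Spectral calculus on the self-adjoint operator $\Delta_i$, together with the identity $\|df\|_2^2 \le \|(\Delta_i+k^2)^{1/2}f\|_2^2$, gives $\|R_k\|_{2\to 2}\le 1$ and hence $\|T_k\|_{2\to 2}\le 1$ uniformly in $k$.

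To promote this to $L^p$ for $1<p<\infty$, I would invoke Calder\'on--Zygmund theory. Integrating by parts in the $z'$-variable, the kernel of $T_k$ takes the form $d_z\delta_{z'}(\Delta_i+k^2)^{-1}(z,z')$, which by \eqref{dd} satisfies
\[
\bigl|d_z\delta_{z'}(\Delta_i+k^2)^{-1}(z,z')\bigr|
\;\le\; C\bigl(d(z,z')^{-N} + d(z,z')^{-n_i}\bigr)\,e^{-ckd(z,z')},
\]
uniformly in $k$. This is precisely the CZ size condition matched to the polynomial volume growth of $\mathcal{V}_i$ (namely $r^N$ for $r\le 1$ and $r^{n_i}$ for $r\ge 1$), with the exponential factor only helping. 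Combined with the $L^2$-bound, the standard singular integral machinery on spaces of polynomial growth delivers the desired $L^p$-boundedness uniformly in $k\in[0,k_0]$.

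The main obstacle I anticipate is the verification of the Hörmander regularity condition on the kernel with constants independent of $k$, since the excerpt states only first and mixed second derivative bounds of $(\Delta_i+k^2)^{-1}(z,z')$. This would require an estimate on one further derivative of the resolvent kernel, which can be obtained from Gaussian heat kernel bounds on $\mathcal{V}_i$ via subordination, or from iterating the parametrix construction. Alternatively, one may bypass the Hörmander verification entirely by invoking the $L^p$-boundedness of the shifted Riesz transform $R_k$ on the product manifold $\mathbb{R}^{n_i}\times M_i$, uniformly in $k\ge 0$ (a standard consequence of Littlewood--Paley analysis on products, or of Bakry's theorem under the trivial curvature condition on such models), together with the factorisation $T_k = R_k R_k^*$ and duality.
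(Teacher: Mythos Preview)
Your proposal is correct, and in fact the alternative you describe at the end is exactly the paper's argument: the paper writes $d(\Delta_i+k^2)^{-1}\delta = R_k R_k^*$ with $R_k = d(\Delta_i+k^2)^{-1/2}$ the local Riesz transform on $\mathbb{R}^{n_i}\times M_i$, invokes Bakry's theorem for its $L^p$-boundedness uniformly in $k\le 1$, and concludes by duality. Your initial detour through Calder\'on--Zygmund theory with the kernel bound \eqref{dd} is viable in principle but, as you yourself note, would require a further derivative estimate for the H\"ormander condition; the paper simply bypasses this by going straight to the $R_kR_k^*$ factorisation, which is both shorter and cleaner.
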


\begin{proof}
Obviously,
\begin{align*}
    |\mathcal{T}_2\omega(z)| \le C \chi_{E_i}(z) |d(\Delta_i+k^2)^{-1}\delta (\phi_i \omega)(z)|.
\end{align*}
Note that
\begin{align*}
    d(\Delta_i+k^2)^{-1}\delta = d(\Delta_i +k^2)^{-1/2}\left(d(\Delta_i +k^2)^{-1/2}\right)^*,
\end{align*}
where $d(\Delta_i +k^2)^{-1/2}$ is the local Riesz transform on $\mathbb{R}^{n_i}\times M_i$, which is known to be bounded on $L^p$ for all $1<p<\infty$ with operator norm independent of $k$ when $k\le 1$ (see for example \cite{bak87}). Hence, it follows by duality that 
\begin{align*}
    \|\mathcal{T}_2\omega\|_p \le C \left\| d(\Delta_i +k^2)^{-1/2}\left(d(\Delta_i +k^2)^{-1/2}\right)^* (\phi_i\omega)\right\|_{L^p(\mathbb{R}^{n_i}\times M_i)} \le C \|\omega\|_p,
\end{align*}
as desired.
    
\end{proof}

\begin{lemma}\label{lemma_bad1}
Let $M$ be a manifold defined by \eqref{M1}, \eqref{M2} or \eqref{M3}. The operator
\begin{align*}
    \mathcal{T}_4: \omega \mapsto \phi_i(\cdot) \int d_z(\Delta_i+k^2)^{-1}(\cdot, z') d\phi_i(z') \omega(z') dz'
\end{align*}
is bounded on $L^p(\Lambda^1(M))$ for all $n_*'<p<\infty$ and is of weak type $(n_*', n_*')$ uniformly in $0<k\le k_0$.
\end{lemma}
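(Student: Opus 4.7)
The plan is to treat $\mathcal{T}_4$ as (a pointwise bound by) an essentially rank-one operator, exploiting the fact that $d\phi_i$ is supported in the fixed bounded annulus $A_i := \operatorname{supp}(d\phi_i) \subset \{1 \le |z'| \le 2\}$ while $\phi_i$ localises $z$ to $E_i \cap \{|z| \ge 1\}$. For such $z$ and $z' \in A_i$, the geodesic distance $d(z,z')$ is comparable to $\max(|z|,1)$, and hence the gradient kernel estimate \eqref{eq_gradient_reso} yields, uniformly in $0 < k \le k_0$,
\begin{align*}
    |d_z(\Delta_i + k^2)^{-1}(z,z')| \le C\, h_i(|z|),
\end{align*}
where $h_i(r) := r^{1-n_i}$ when $n_i \ge 3$ (this term dominates $r^{1-N}$), and $h_i(r) := r^{-1}$ on the parabolic ends arising in \eqref{M2} and \eqref{M3}. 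The factor $e^{-ckd(z,z')} \le 1$ supplies the $k$-uniformity.

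Next I would integrate and apply H\"older's inequality on the finite-measure set $A_i$ to obtain the pointwise rank-one-type bound
\begin{align*}
    |\mathcal{T}_4 \omega(z)| \le C\, \chi_{E_i}(z)\, h_i(|z|) \int_{A_i} |\omega(z')|\, dz' \le C\, \chi_{E_i}(z)\, h_i(|z|)\, \|\omega\|_p,
\end{align*}
valid for every $1 < p < \infty$, with constant independent of both $k$ and $\omega$. A polar-coordinate computation on $E_i$, whose Riemannian volume grows like $r^{n_i - 1}$, shows that $h_i \in L^p(E_i \cap \{|z| \ge 1\})$ if and only if $p > n_i'$. Since $n_*' = \max_i n_i'$, summing over $i$ yields the strong $L^p \to L^p$ bound for every $p > n_*'$, uniformly in $k \le k_0$.

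For the weak-type $(n_*', n_*')$ estimate, I would take $p = n_*'$ in the rank-one bound above and analyse the distribution function of $h_i$ directly. Since $h_i \le 1$ on $\{|z|\ge 1\}$, the super-level set $\{h_i > \mu\}$ is empty for $\mu \ge 1$, while for $0 < \mu \le 1$ it is contained in $\{|z| \le \mu^{-1/(n_i - 1)}\}$ and has measure at most $C\mu^{-n_i'}$. Because $n_i' \le n_*'$ and $\mu \le 1$, this is in turn dominated by $C\mu^{-n_*'}$, so $h_i \in L^{n_*',\infty}(E_i)$ and the desired distributional inequality follows at once.

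The only real point requiring attention is the uniformity in $k$ of the kernel estimate \eqref{eq_gradient_reso}, which is guaranteed by the exponential factor $e^{-ckd(z,z')}$; in particular, unlike in Lemma~\ref{lemma_1} and Lemma~\ref{remedy}, no fine information from the approximate-solution lemmata is needed here. I expect the borderline parabolic cases ($n_i = 2$, where $h_i(r) = r^{-1}$ and the threshold $p > 2$ coincides with $p > n_*'$) to be the only place where one must pause to check that the argument goes through in the same form across all three classes \eqref{M1}--\eqref{M3}.
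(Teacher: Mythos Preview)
Your rank-one bound fails near the diagonal. The support of $\phi_i$ includes the annulus $A_i=\{1\le|z|\le 2\}$, so when both $z$ and $z'$ lie in $A_i$ the distance $d(z,z')$ can be arbitrarily small; it is \emph{not} comparable to $\max(|z|,1)$. By \eqref{eq_gradient_reso} the kernel then blows up like $d(z,z')^{1-N}$, which is certainly not dominated by $h_i(|z|)$. Consequently the pointwise estimate
\[
|d_z(\Delta_i+k^2)^{-1}(z,z')|\le C\,h_i(|z|)
\]
is false on this region, and the rank-one domination of $|\mathcal{T}_4\omega(z)|$ by $C\,h_i(|z|)\,\|\omega\|_p$ is unjustified.

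The paper's proof repairs precisely this point by splitting into $d(z,z')\le 4$ and $d(z,z')\ge 4$. On the off-diagonal part one has $|z|\le 2d(z,z')$ (since $|z'|\le 2$), and your rank-one argument goes through verbatim there. On the near-diagonal part the singularity $d(z,z')^{1-N}$ is locally integrable in each variable over the bounded region in question, so a Schur test gives $L^p$ boundedness for all $1\le p\le\infty$. Your weak-type endpoint analysis is then correct, applied to the off-diagonal piece.
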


\begin{proof}
Let $p>n_*'$. By asymptotic formula, the operator has pointwise upper bound:
\begin{align*}
    \chi_{E_i}(z) \int_{\textrm{supp}(d\phi_i)} \left[ d(z,z')^{1-N} + d(z,z')^{1-n_i} \right] |\omega d\phi_i(z')| dz'.
\end{align*}
If $d(z,z')\le 4$, then the kernel is bounded by
\begin{align*}
    C\chi_{E_i}(z) d(z,z')^{1-N} \chi_{\textrm{supp}(d\phi_i)}(z') \chi_{d(z,z')\le 4}.
\end{align*}
A simple Shur's test shows that this diagonal part acts as a bounded operator on $L^p$ for all $1\le p\le \infty$. Next, we consider the off-diagonal part, where $d(z,z')\ge 4$. In this case, the operator is bounded by
\begin{align*}
    \chi_{E_i}(z) \int_{d(z,z')\ge 4} \frac{|\omega d\phi_i(z')|}{d(z,z')^{n_i-1}} dz',
\end{align*}
which is a Riesz potential type operator.

Note that for $z'\in \textrm{supp}(d\phi_i)$ and $d(z,z')\ge 4$, one has $|z|\le 2d(z,z')$ and hence the above is bounded by
\begin{align*}
    \chi_{E_i}(z) |z|^{1-n_i} \|\omega d\phi_i\|_1 \le C \chi_{E_i}(z) |z|^{1-n_i} \|\omega\|_p.
\end{align*}
The result follows directly since $\chi_{E_i}(z) |z|^{1-n_i} \in L^p$ for $p>n_*'$ and it is of weak type $(n_*', n_*')$.

\end{proof}

\begin{lemma}\label{lemma_good1}
Let $M$ be a manifold defined by \eqref{M1}, \eqref{M2} or \eqref{M3}. The operator
\begin{align*}
    \mathcal{R}_1: \omega \mapsto d\phi_i(\cdot) \int d_{z'} \left[(\Delta_i+k^2)^{-1}(\cdot, z') - (\Delta_i+k^2)^{-1}(0,z') \right] \phi_i(z') \omega(z') dz'
\end{align*}
is bounded on $L^p(\Lambda^1(M))$ for all $1<p<\infty$ uniformly in $0<k\le k_0$.
\end{lemma}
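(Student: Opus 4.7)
The plan is to integrate by parts in $z'$, reducing $\mathcal{R}_1\omega$ to a product of $d\phi_i$ with the difference $g(z)-g(0)$ of a single elliptic potential, and then combine the $L^p$-boundedness of the local Hodge projector on the product end with a Poincaré-type inequality that exploits the elliptic regularity of $g$ near the origin. Writing $G_k(z,z'):=(\Delta_i+k^2)^{-1}(z,z')$ and using the adjoint identity $\int\langle d\alpha,\beta\rangle=\int\alpha\,\delta\beta$ to move $d_{z'}$ off the kernel onto $\phi_i\omega$, the integrand becomes $[G_k(z,\cdot)-G_k(0,\cdot)]\,\delta(\phi_i\omega)$; the singularity $G_k(z,z')\sim d(z,z')^{2-N}$ at $z'=z$ lies in $L^1_{\mathrm{loc}}$ and the boundary term on $\partial B(z,\epsilon)$ contributes $O(\epsilon)\to 0$, so
\[
\mathcal{R}_1\omega(z) \;=\; d\phi_i(z)\,\bigl[g(z)-g(0)\bigr],\qquad g := (\Delta_i+k^2)^{-1}\,\delta(\phi_i\omega).
\]

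As in Lemma~\ref{lem3.4}, the factorisation $d(\Delta_i+k^2)^{-1}\delta = d(\Delta_i+k^2)^{-1/2}\,\bigl(d(\Delta_i+k^2)^{-1/2}\bigr)^{*}$ combined with the $L^p$-boundedness of the local Riesz transform on $\mathbb{R}^{n_i}\times M_i$ (uniform in $k\le k_0$) gives $\|dg\|_p\le C\|\omega\|_p$ for every $1<p<\infty$. Since $|d\phi_i|$ is bounded and supported in the compact annulus $A=\{1\le|z|\le 2\}\cap E_i$, matters reduce to the Poincaré-type bound
\[
\|g-g(0)\|_{L^p(A)} \;\le\; C\,\|dg\|_{L^p(U)}
\]
on some fixed bounded connected $U\supset A\cup\{0\}$, with constants uniform in $k\le k_0$. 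The decisive point is that $\operatorname{supp}(\phi_i)\subset\{|z|\ge 1\}$ is disjoint from a neighbourhood of $0$, so $g$ satisfies the homogeneous elliptic equation $(\Delta_i+k^2)g=0$ on $B(0,1)$; interior elliptic regularity then controls $g$ pointwise on $B(0,1/2)$ by $\|g\|_{L^p(B(0,1))}$. The approximate mean value property for Helmholtz (exact at $k=0$, with corrections of size $k^2$) allows the splitting $g(0)-g_U=[g(0)-g_{B(0,1)}]+[g_{B(0,1)}-g_U]$; the first bracket is $O(k^2)$ and the second is bounded by $C\|dg\|_{L^p(U)}$ via standard Poincaré on $U$. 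Combined with $\|g-g_U\|_{L^p(A)}\le\|g-g_U\|_{L^p(U)}\le C\|dg\|_{L^p(U)}$, this delivers the required estimate.

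I expect the Poincaré-with-point-evaluation step to demand the most care: for a generic $W^{1,p}$ function with $p\le N$ the naive bound $\|u-u(0)\|_{L^p}\le C\|du\|_{L^p}$ fails, since pointwise evaluation is not continuous on $W^{1,p}$. What salvages the argument here is that $\delta(\phi_i\omega)$ vanishes identically in a neighbourhood of $0$, so $g$ is smooth there by interior elliptic regularity with constants uniform in $k\le k_0$, making $g(0)$ a bounded linear functional of $dg$ in the relevant norm.
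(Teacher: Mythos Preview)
Your approach is sound and genuinely different from the paper's. The paper works directly with the kernel: in the near region $d(0,z')\le 2\sigma$ it bounds $d_{z'}G_k(z,z')$ and $d_{z'}G_k(0,z')$ separately by $d(z,z')^{1-N}$ and $d(0,z')^{1-N}$ and applies Schur's test, while in the far region $d(0,z')\ge 2\sigma$ it applies the mean value theorem to the kernel itself, using the second-derivative bound \eqref{dd} to extract the extra decay $d(0,z')^{-n_i}$, which pairs with $\phi_i\omega$ in every $L^p$. Your route---integrating by parts to obtain $\mathcal{R}_1\omega=d\phi_i\,(g-g(0))$ with $g=(\Delta_i+k^2)^{-1}\delta(\phi_i\omega)$, then combining the local Hodge bound $\|dg\|_p\le C\|\omega\|_p$ from Lemma~\ref{lem3.4} with a Poincar\'e argument exploiting that $g$ is Helmholtz near $0$---is more functional-analytic and recycles Lemma~\ref{lem3.4}; the paper's proof is shorter, entirely kernel-based, and avoids any appeal to Riesz-transform bounds on the model end.

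One step needs tightening. Your claim that ``the first bracket is $O(k^2)$'' is incomplete: the Helmholtz mean-value correction gives $|g(0)-g_{B(0,1)}|\le Ck^2\|g\|_{L^1(B(0,1))}$, and you have not bounded $\|g\|$ in terms of $\|\omega\|_p$; relatedly, the assertion that ``$g(0)$ is a bounded linear functional of $dg$'' is false as written, since constants are invisible to $d$. The cleanest repair is to note that each component $\partial_j g$ is itself a Helmholtz solution on $B(0,1)$, so interior estimates give $\|dg\|_{L^\infty(B(0,1/2))}\le C\|dg\|_{L^p(B(0,3/4))}\le C\|\omega\|_p$ uniformly in $k\le k_0$; then $|g(0)-g_{B(0,1/2)}|\le C\|\omega\|_p$ by integrating $dg$ along radii, and the remaining link $|g_{B(0,1/2)}-g_U|$ follows from standard Poincar\'e on the bounded connected set $U$. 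Alternatively, factor $g=(\Delta_i+k^2)^{-1/2}\bigl(d(\Delta_i+k^2)^{-1/2}\bigr)^{*}(\phi_i\omega)$ and use the heat-semigroup bound $\|(\Delta_i+k^2)^{-1/2}\|_{p\to p}\le Ck^{-1}$ to get $\|g\|_p\le Ck^{-1}\|\omega\|_p$, so that the $O(k^2)\|g\|$ term is in fact $O(k)\|\omega\|_p$.
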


\begin{proof}

let $\sigma = \sup_{x\in \textrm{supp}(d\phi_i)}d(x,0)$. If $d(0,z')\le 2 \sigma$, we bound the kernel by (note that in this case $d(z,z')\lesssim 1$)
\begin{equation}\label{eq_local}
    \chi_{\textrm{supp}(d\phi_i)}(z)\chi_{\{E_i:d(0,z')\le 2\sigma\}}(z') \left[d(z,z')^{1-N}+d(0,z')^{1-N}\right].
\end{equation}
It is plain that for all $z\in \textrm{supp}(d\phi_i)$
\begin{align*}
    \int \chi_{\{E_i:d(0,z')\le 2\sigma\}}(z')\left[d(z,z')^{1-N}+d(0,z')^{1-N}\right] dz' &\le C \int_{B(z,3\sigma)} d(z,z')^{1-N}dz' +C \int_{B(0,2\sigma)}d(0,z')^{1-N}dz'\\
     &\le C \int_0^{3\sigma} s^{1-N+N-1} ds
    \le C.
\end{align*}
On the other hand, a similar estimate yields that for all $z'\in E_i$,
\begin{gather*}
    \int \chi_{\textrm{supp}(d\phi_i)}(z) \chi_{\{E_i:d(0,z')\le 2\sigma\}}(z') \left[d(z,z')^{1-N}+d(0,z')^{1-N}\right] dz\lesssim 1.
\end{gather*}
By Schur's test, this local part of the kernel acts as a bounded operator on $L^p$ for $p\in [1,\infty]$. While for the case $d(0,z')\ge 2\sigma$, we use mean value theorem and gradient estimate to get 
\begin{gather}\label{eq_remote}
     |\mathcal{R}_1(z,z')| \chi_{E_i;d(0,z')\ge 2\sigma}(z') \le C \chi_{\textrm{supp}(d\phi_i)}(z) \chi_{E_i;d(0,z')\ge 2\sigma}(z') d(z^*,z')^{-n_i} \le C \chi_{\textrm{supp}(d\phi_i)}(z) \chi_{E_i}(z')d(0,z')^{-n_i},
\end{gather}
where $z^*$ is some point seating on the geodesic jointing $z$ and $0$. Now, it is plain that \eqref{eq_remote} acts boundedly on $L^p$ for all $p\in (1,\infty)$, completing the proof.   
 
\end{proof}

\begin{lemma}\label{lemma_bad2}
The operator
\begin{align*}
    \mathcal{Q}_1: \omega \mapsto d\phi_i(\cdot) \int \left[(\Delta_i +k^2)^{-1}(\cdot, z') - (\Delta_i+k^2)^{-1}(0,z') \right] d\phi_i(z') \omega(z') dz'
\end{align*}
is bounded on $L^p(\Lambda^1(M))$ uniformly in $0<k\le k_0$

$(1)$ for all $1\le p\le \infty$  if $M$ is defined by \eqref{M1},

$(2)$ for all $2<p<\infty$ if $M$ is defined by \eqref{M2},

$(3)$ for all $1<p\le \infty$ if $M$ is defined by \eqref{M3}.
\end{lemma}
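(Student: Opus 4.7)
The plan is to exploit the compact support of $d\phi_i$---namely $\operatorname{supp}(d\phi_i)\subset\{z\in E_i:1\le|z|\le 2\}$---so that the full integral kernel of $\mathcal{Q}_1$ is supported on a fixed compact product set. The question reduces to a uniform-in-$k$ Schur estimate on this compact set for the scalar Green-difference
\[
K(z,z')\;:=\;(\Delta_i+k^2)^{-1}(z,z')-(\Delta_i+k^2)^{-1}(0,z'),
\]
weighted by the bounded factors $d\phi_i(z)$ and $d\phi_i(z')$.

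For case (1), the non-parabolic resolvent estimate \eqref{eq_reso} together with the triangle inequality yields $|K(z,z')|\le C[d(z,z')^{2-N}+d(z,z')^{2-n_i}+d(0,z')^{2-N}+d(0,z')^{2-n_i}]$ on the compact bi-support, uniformly in $k$. Each summand is uniformly in $L^1(dz)\cap L^1(dz')$ over that set since $n_i\ge 3$ and $N\ge n_i$, so Schur's test delivers boundedness on $L^p$ for the full range $1\le p\le\infty$.

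For case (3), I would use the small-argument expansion $K_0(t)=-\log(t/2)-\gamma+O(t^2|\log t|)$ applied to the Euclidean resolvent $\tfrac{1}{2\pi}K_0(k|z-z'|)$. The two $-\log k$ contributions in $K_0(k|z-z'|)-K_0(k|z'|)$ cancel, leaving $K(z,z')=\tfrac{1}{2\pi}\log(|z'|/|z-z'|)+O(k^2|\log k|)$ on the compact bi-support, uniformly in $k\le k_0$. The residual two-dimensional logarithmic singularity is locally integrable, and Schur then closes $L^p$-boundedness for $1<p\le\infty$.

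The main obstacle is case (2) on the parabolic end $E_1=\mathbb{R}^2\times M_1$. I would spectrally decompose the resolvent using the eigendata $(\mu_j,\psi_j)$ of $\Delta_{M_1}$, isolating the zero mode $\tfrac{1}{2\pi|M_1|}K_0(k|z_1-z_1'|)$---whose $|\log k|$ divergence cancels in $K$ exactly as in case (3)---from the positive-mode tail, which is uniformly bounded in $k$ (no $|\log k|$, since $\mu_j>0$) and combines into the diagonal singularity $d(z,z')^{2-N}$ already controlled by the argument of case (1). The restriction $p>2$ is then forced by the same mechanism that underlies Lemma~\ref{remedy}: the nontrivial cross-section $M_1$ prevents the Bessel cancellation from being as clean as in case (3), and the proper bookkeeping must track it against the parabolic $L^p$ geometry of $E_1$, where weights of the type $|z|^{-1}\chi_{E_1}$ lie in $L^{p'}(E_1)$ only when $p'<2$. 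I would therefore close case (2) by grafting the Bessel cancellation above onto the $\operatorname{ilg}(k)\,|\log k|\le 1$ device from the proofs of Lemmata~\ref{lemma_1} and \ref{remedy}, which is precisely what enforces the $p>2$ threshold.
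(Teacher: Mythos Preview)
Your Cases~1 and~3 are essentially the paper's arguments: a triangle-inequality Schur test for the non-parabolic resolvent in \eqref{M1}, and the Bessel expansion $K_0(t)=-\log t+O(1)$ with $\log k$ cancellation for $\mathbb{R}^2\#\mathbb{R}^2$.

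For Case~2 the paper does something far more elementary than your spectral decomposition. It applies the mean value theorem along the geodesic from $0$ to $z$: since $|z|\le 2$ on $\operatorname{supp}(d\phi_i)$,
\[
|K(z,z')|\le |z|\,|\nabla(\Delta_i+k^2)^{-1}(z^*,z')|\le C\,|z'-z^*|^{-1}
\]
via the gradient bound \eqref{eq_gradient_reso}, and then a single H\"older step gives $|\mathcal{Q}_1\omega(z)|\le C\chi_{1\le|z|\le 2}(z)\|\omega\|_p\bigl(\int_0^4 r^{-p'}\,r\,dr\bigr)^{1/p'}$, finite precisely for $p'<2$. The paper explicitly remarks after the lemma that this threshold is not expected to be sharp; it is kept only because $p>2$ suffices for the application.

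Your spectral-decomposition route is valid and, if carried through, would actually yield the full range $1<p<\infty$: after the zero-mode Bessel cancellation you correctly identify, the remaining singularities---$\log|z_1-z_1'|$ from the zero mode and $d(z,z')^{2-N}$ from the positive-mode tail---are both Schur-integrable on the compact bi-support. Your justification for the $p>2$ restriction, however, is incorrect. The condition $|z|^{-1}\chi_{E_1}\in L^{p'}(E_1)$ you invoke is a \emph{global} integrability statement on the non-compact end and is irrelevant here, since both copies of $d\phi_i$ force the kernel onto a fixed compact set. Likewise, the $\operatorname{ilg}(k)\,|\log k|\le 1$ device from Lemma~\ref{remedy} is designed for $\mathcal{Q}_2$, where one factor genuinely diverges like $|\log k|$ as $k\to 0$; in $\mathcal{Q}_1$ you have already argued that the $|\log k|$ divergence cancels in the difference, so there is nothing left to compensate. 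In short, your approach proves more than the lemma claims, but you then argue for a restriction that does not follow from it.
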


\begin{proof}

\textit{Case 1. $M$ defined by \eqref{M1}.}
It is clear to see that the kernel $\mathcal{Q}_1(z,z')$ is bounded by
\begin{align*}
    \chi_{1\le |z|\le 2}(z) \left( d(z,z')^{2-N} + 1     \right) \chi_{1\le |z'|\le 2}(z'),
\end{align*}
which is integrable for both $z$ and $z'$ variables. Indeed, it is plain that
\begin{align*}
    \sup_{1\le |z| \le 2} \int_{1\le |z'| \le 2} d(z,z')^{2-N} dz' + \int_{1\le |z'| \le 2} dz' \le \int_{d(z,z')\le 4} d(z,z')^{2-N} dz' +C \le C,
\end{align*}
and the integrability in $z$-variable follows by symmetry. By Shur's test, $\mathcal{Q}_1$ acts as a bounded operator on $L^p$ for all $1\le p\le \infty$.

\textit{Case 2. $M$ defined by \eqref{M2}.}
By mean value theorem, there exists a point $z^*$ seating on the geodesic jointing $0$ and $z$ such that
\begin{align*}
\left|(\Delta_i +k^2)^{-1}(z, z') - (\Delta_i+k^2)^{-1}(0,z') \right| \le \left|\nabla (\Delta_i + k^2)^{-1}(z^*,z') \right| |z| \le C|z| |z'-z^*|^{-1}.
\end{align*}
Now, by the compact support property of $d\phi_i$, one concludes that
\begin{align*}
    |\mathcal{Q}_1\omega(z)| &\le \chi_{1\le |z|\le 2}(z) \int_{|z^* - z'|\le 4} |z^* - z'|^{-1} |\omega(z')| dz'\\
    &\le C \chi_{1\le |z|\le 2}(z) \|\omega\|_p \left( \int_0^4 r^{-p'} rdr \right)^{1/p'}\\
    &\le C \chi_{1\le |z|\le 2}(z) \|\omega\|_p,
\end{align*}
for all $p>2$.

\textit{Case 3. $M$ defined by \eqref{M3}.}
For this case, the resolvent has formula:
\begin{align*}
    (\Delta_{\mathbb{R}^2}+k^2)^{-1}(z,z') = CK_0(k|z-z'|),
\end{align*}
where $K_0$ is the modified Bessell function of the second type of order 0.

By \cite{AS}, $K_0(x) = -\log{x} + O(|x|)$ for $|x|$ small. Note that in our case, since both $z,z'$ are supported in some compact annulus, we deduce that for $z,z'\in \textrm{supp}(d\phi_i)$,
\begin{align*}
    |(\Delta_{\mathbb{R}^2}+k^2)^{-1}(z,z') - (\Delta_{\mathbb{R}^2}+k^2)^{-1}(0,z')| &= C|-\log{k|z-z'|} + \log{k|z'|} + O(1)| \\
    &= C| \log{|z'|} - \log{|z-z'|} + O(1)|.
\end{align*}
There is no doubt that the kernel $\chi_{1\le |z|\le 2}(z) (|\log{|z'|}|+O(1)) \chi_{1\le |z'|\le 2}(z')$ acts as a bounded operator on $L^p$ for all $1\le p\le \infty$. To this end, one notices that
\begin{align*}
    \chi_{1\le |z|\le 2}(z) \int_{1\le |z'|\le 2} \log{|z-z'|} |\omega(z')| dz' \le C \chi_{1\le |z|\le 2}(z) \|\omega\|_p \left( \int_{0}^4 |\log{r}|^{p'} rdr \right)^{1/p'}\le C \chi_{1\le |z|\le 2}(z) \|\omega\|_p
\end{align*}
for all $1<p\le \infty$. The result follows easily.

\end{proof}

\begin{remark}
It is natural to conjecture that, on the manifold defined by \eqref{M2}, $\mathcal{Q}_1$ should likewise be bounded on $L^p$ for every $1<p<\infty$, with an operator norm independent of sufficiently small $k$. However, Lemma \ref{lemma_bad1} already costs us part of this boundedness range, and the regime $p>2$ is all we need to obtain full-range boundedness of the Hodge projector in \eqref{M2}. We therefore keep Lemma \ref{lemma_bad2} in its present, slightly less accurate form.
\end{remark}

\section{Estimates for the remainder}\label{sec4}

In this section, we analyze the contribution of $G_2$, $G_4$ terms in the perturbation operator $\mathcal{P}_k = d(\Delta+k^2)^{-1} \delta$. 

\subsection{Estimates for $G_2$}

\begin{lemma}\label{lemma_G2}
Let $M$ be a manifold defined by \eqref{M1}, \eqref{M2} or \eqref{M3}. Then
\begin{align*}
    \sup_{0<k\le k_0}\|dG_2(k)\delta\|_{p\to p} \le C,\quad \forall 1< p<  \infty.
\end{align*}
\end{lemma}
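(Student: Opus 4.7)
The plan is to exploit the compact support of $G_{\mathrm{int}}(k)(z,z')$ and the smoothness of the cutoff $\Phi(z,z'):=1-\sum_i\phi_i(z)\phi_i(z')$. Together these imply that the kernel of $G_2(k)=G_{\mathrm{int}}(k)\Phi$ is compactly supported in $\Omega\times\Omega$ for some compact $\Omega\subset M$ independent of $k$. The only genuinely singular feature is the diagonal singularity inherited from the resolvent kernel; everything else is either smooth or has better regularity. I would move all derivatives onto the kernel by integration by parts, decompose by Leibniz, and then identify the leading Calderón--Zygmund-type piece with a local Hodge projection to which Lemma~\ref{lem3.4} applies.

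First, for $\omega,\nu\in C_c^\infty(\Lambda^1(M))$, I pair and integrate by parts (justified by a limiting argument around the diagonal exactly as in Section~\ref{sec2.2}, using that $\Phi$ is smooth and that $\omega,\nu$ have compact support, so there are no boundary contributions at infinity):
\[
\langle dG_2(k)\delta\omega,\nu\rangle
=\iint d_z d_{z'}\bigl[G_{\mathrm{int}}(k)(z,z')\,\Phi(z,z')\bigr]\,\omega(z')\cdot\nu(z)\,dz\,dz'.
\]
Applying the Leibniz rule in both variables splits the kernel into four pieces:
\[
(d_z d_{z'}G_{\mathrm{int}})\Phi
\;+\; d_zG_{\mathrm{int}}\otimes d_{z'}\Phi
\;+\; d_{z'}G_{\mathrm{int}}\otimes d_z\Phi
\;+\; G_{\mathrm{int}}\,(d_z d_{z'}\Phi).
\]
The latter three terms are supported in $\Omega\times\Omega$ and involve at most one derivative on $G_{\mathrm{int}}$; by the pointwise bounds \eqref{eq_reso}--\eqref{eq_gradient_reso} their kernels are locally integrable in both variables uniformly for $0<k\le k_0$. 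Schur's test then yields uniform $L^p$-boundedness of the corresponding operators for every $1\le p\le\infty$.

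The one remaining contribution is $(d_z d_{z'}G_{\mathrm{int}})(z,z')\,\Phi(z,z')$, which carries the genuine Calderón--Zygmund singularity $d(z,z')^{-N}$ from \eqref{dd}. Here I would use the construction of $G_{\mathrm{int}}(k)$: by a finite partition of unity subordinate to a coordinate cover of $\Omega$, the parametrix can be written as a finite sum of localized pieces, each of which coincides with a local resolvent $(\Delta_i+k^2)^{-1}$ on its support. The associated operator then reduces to a finite sum of terms of the shape $\chi_1\cdot d(\Delta_i+k^2)^{-1}\delta\cdot\chi_2$ with $\chi_1,\chi_2\in C_c^\infty(M)$, modulo commutators of $d$ and $\delta$ with the cutoffs that produce kernels of the kind already controlled in the previous step. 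By the factorization $d(\Delta_i+k^2)^{-1}\delta=\bigl(d(\Delta_i+k^2)^{-1/2}\bigr)\bigl(d(\Delta_i+k^2)^{-1/2}\bigr)^*$ together with the uniform $L^p$-boundedness of the local Riesz transforms for $k\le 1$ invoked in Lemma~\ref{lem3.4}, each such term is bounded on $L^p(\Lambda^1)$ for all $1<p<\infty$ uniformly in $0<k\le k_0$.

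The main obstacle I anticipate is Step~3: making the localization of $G_{\mathrm{int}}(k)$ rigorous so that the reduction to $d(\Delta_i+k^2)^{-1}\delta$ is exact up to lower-order, compactly supported kernels. In particular, one must verify that every commutator produced by moving cutoffs past $d$ and $\delta$ yields only kernels of the type absorbed in Step~2 (at most one derivative on the resolvent, compact support), and that no uncontrolled singular pieces are generated. Once this localization is in place, the rest of the argument is a routine combination of Schur's test and Lemma~\ref{lem3.4}.
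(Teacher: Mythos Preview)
Your approach is correct in outline and more elementary than the paper's, which dispatches the lemma in one line: since $G_{\mathrm{int}}(k)$ is a parametrix for $\Delta+k^2$ with kernel supported near the diagonal of $K\times K$, the operator $dG_2(k)\delta$ is a compactly supported classical pseudodifferential operator of order zero, uniformly in $k\le k_0$, and standard $L^p$ boundedness of such operators finishes the proof. Your Leibniz decomposition and Schur-test handling of the three lower-order pieces are fine and amount to unpacking that standard fact by hand.

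The one imprecision is in Step~3. You assert that after localization each piece of $G_{\mathrm{int}}(k)$ ``coincides with a local resolvent $(\Delta_i+k^2)^{-1}$ on its support''. The paper only guarantees this coincidence in a small neighbourhood of the diagonal and on the specific supports $\operatorname{supp}(\nabla\phi_i(z)\,\phi_i(z'))$; in the deep interior of $K$ the parametrix need not equal any single end resolvent exactly. What \emph{is} true is that on each coordinate patch the difference between $G_{\mathrm{int}}(k)$ and a local resolvent is smoothing uniformly in $k$ (this is precisely the parametrix property), so after absorbing that smoothing remainder into your Schur-controlled pile the reduction to the local Riesz factorization of Lemma~\ref{lem3.4} goes through. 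With that adjustment your argument is complete; the paper's route simply trades this bookkeeping for a citation to order-zero $\Psi$DO boundedness.
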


\begin{proof}
It is claer that
\begin{align*}
dG_2(k)\delta(z,z') = d_z d_{z'} G_2(k)(z,z'),       
\end{align*}
which is a peudodifferential operator of order zero uniformly in $k\le k_0$ with compact support. So it is bounded on $L^p$ for all $1<p<\infty$.

\end{proof}

\subsection{Estimates for $G_4$}
In this subsection, we consider the term $G_4$. Since the construction of this part is different for $M$ defined by \eqref{M1} and \eqref{M2}, \eqref{M3}, we split our argument into two parts.

\textit{Case 1. For $M$ defined by \eqref{M1}.} Recall that in this case $G_4(k)$ is defined via the key lemma~\ref{keylemma_HS}, appearing in the form
\begin{align}
    G_4(k)(z,z') = - \int (\Delta + k^2)^{-1}(z,s) E(k)(s,z') ds,
\end{align}
where the error term $E(k)$ is given by \eqref{eq_error1}, \eqref{eq_error2} and \eqref{eq_error3}.

Hence,
\begin{align}
    dG_4(k)\delta (z,z') = - \int d_{z}(\Delta + k^2)^{-1}(z,s) d_{z'}E(k)(s,z') ds.
\end{align}
Since $E(k)(z,z')$ is compactly supported in $z$-variable, it follows by Lemma~\ref{keylemma_HS} that
\begin{align}
    |dG_4(k)\delta (z,z')| \le C |z|^{1-n_i} e^{-ck|z|} \| d_{z'}E(k)(\cdot,z')\|_\infty,\quad \forall z\in E_i.
\end{align}
So, the crux is to estimate $|d_{z'}E(k)(z,z')|$. Recall explicit formulas from \eqref{eq_error1}, \eqref{eq_error2} and \eqref{eq_error3}. For \eqref{eq_error1}, applying $d_{z'}$ to each term in the sum, the first line becomes
\begin{align*}
    &-2 \nabla \phi_i(z) \nabla_{z'}\phi_i(z') \big(\nabla_z (\Delta_{i} + k^2)^{-1}(z,z') - \nabla_z G_{\textrm{int}}(z,z')\big)\\
    &-2 \nabla \phi_i(z) \phi_i(z') \big(\nabla_{z'} \nabla_z (\Delta_{i} + k^2)^{-1}(z,z') - \nabla_{z'} \nabla_z G_{\textrm{int}}(z,z') \big).
\end{align*}
Next, by construction, $G_{\textrm{int}}$ coincides with $(\Delta_i+k^2)^{-1}$ near diagonal and on the support of $\nabla \phi_i(z) \phi_i(z')$. Hence, by \eqref{dd}, the above is $O(|z'|^{-n_i} e^{-ck|z'|})$ for all $k\le k_0$.

Similarly, the the second line is
\begin{align*}
    &\nabla_{z'}\phi_i(z') v_i(z) \big( -(\Delta_{i} + k^2)^{-1}(z,z') + G_{\textrm{int}}(k)(z,z') + (\Delta_{i} + k^2)^{-1}(z_i^0,z')\big)\\
    &+ \phi_i(z') v_i(z) \big( - \nabla_{z'}(\Delta_{i} + k^2)^{-1}(z,z') + \nabla_{z'}G_{\textrm{int}}(k)(z,z') + \nabla_{z'}(\Delta_{i} + k^2)^{-1}(z_i^0,z')\big).
\end{align*}
It follows by \eqref{eq_reso}, \eqref{eq_gradient_reso}, the above is again $O(|z'|^{-n_i}e^{-ck|z'|})$. The third line \eqref{eq_error3} is smooth and compactly supported and its estimate is trivial.

In summary, one gets
\begin{align}\label{43}
     \| d_{z'}E(k)(\cdot,z')\|_\infty \le C \begin{cases}
         1, & z'\in K,\\
        |z'|^{-n_i} e^{-ck|z'|}, & z'\in E_i.
     \end{cases}
\end{align}
Thus,
\begin{align}\label{44}
    |dG_4(k)\delta (z,z')| \le C \begin{cases}
        1, & z\in K, z'\in K,\\
        |z'|^{-n_j} e^{-ck|z'|}, & z\in K, z'\in E_j,\\
        |z|^{1-n_i} e^{-ck|z|}, & z\in E_i, z'\in K,\\
        |z|^{1-n_i} e^{-ck|z|} |z'|^{-n_j} e^{-ck|z'|}, & z\in E_i, z'\in E_j.
    \end{cases}
\end{align}
It follows that

\begin{lemma}\label{lemma_G4_1}
Let $M$ be a manifold defined by \eqref{M1}. Then
\begin{align}
    \sup_{0<k\le 1}\|dG_4(k)\delta\|_{p\to p} \le C
\end{align} 
for all $n_*'<p<\infty$.

In addition, $d G_4(k) \delta$ is of weak type $(n_*', n_*')$ uniformly in $0<k\le k_0$.
\end{lemma}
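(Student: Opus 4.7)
The proof proceeds directly from the pointwise kernel estimate \eqref{44}, which already has a separable (rank-one type) structure. Set
\begin{align*}
f_k(z) &= \chi_K(z) + \sum_i \chi_{E_i}(z)\,|z|^{1-n_i}\,e^{-ck|z|},\\
g_k(z') &= \chi_K(z') + \sum_j \chi_{E_j}(z')\,|z'|^{-n_j}\,e^{-ck|z'|},
\end{align*}
so that \eqref{44} yields $|dG_4(k)\delta(z,z')|\le C\,f_k(z)\,g_k(z')$ in all four cases, with a constant independent of $k\le k_0$. The plan is to reduce the $L^p\to L^p$ boundedness to two separate integrability checks $\|f_k\|_p<\infty$ and $\|g_k\|_{p'}<\infty$, both uniform in $k$.

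The first step is to apply H\"older's inequality in the $z'$ variable: for $\omega\in C_c^\infty(\Lambda^1(M))$,
\[
|dG_4(k)\delta\,\omega(z)|\;\le\; C\,f_k(z)\,\|g_k\|_{p'}\,\|\omega\|_p,
\]
which, upon taking $L^p$ norms in $z$, gives $\|dG_4(k)\delta\,\omega\|_p\le C\,\|f_k\|_p\,\|g_k\|_{p'}\,\|\omega\|_p$. The second step is to compute $\|f_k\|_p$ and $\|g_k\|_{p'}$ using the volume growth $r^{n_i-1}$ on each end $E_i$. For $f_k$, the tail integral $\int_{E_i}|z|^{(1-n_i)p}\,dz\lesssim\int_1^\infty r^{(1-n_i)p+n_i-1}\,dr$ is finite precisely when $p>n_i/(n_i-1)=n_i'$; since $n\mapsto n'$ is decreasing, the binding condition across all ends is $p>n_*'$. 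For $g_k$ the analogous computation returns $p'>1$, i.e.\ $p<\infty$. Combining these gives $\|dG_4(k)\delta\|_{p\to p}\le C$ on the stated range $n_*'<p<\infty$. Uniformity in $k$ is automatic since $e^{-ck|\cdot|}\le 1$ only improves the integrability, and the compact contribution $|K|$ is fixed.

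For the weak-type endpoint $p=n_*'$, the same H\"older reduction yields $|dG_4(k)\delta\,\omega(z)|\le C\,f_k(z)\,\|g_k\|_{n_*}\,\|\omega\|_{n_*'}$, with $\|g_k\|_{n_*}$ still uniformly finite by the computation above. The function $f_k$ fails to lie in $L^{n_*'}$ because of the critical end(s) where $n_i=n_*$, but it does sit uniformly in $L^{n_*',\infty}$ (this is precisely the weak-type Riesz potential scaling of $|z|^{1-n_*}$). A one-line Chebyshev argument,
\[
\bigl|\{z:|dG_4(k)\delta\,\omega(z)|>\lambda\}\bigr|\;\le\;\Bigl|\bigl\{z:f_k(z)>\tfrac{\lambda}{C\|g_k\|_{n_*}\|\omega\|_{n_*'}}\bigr\}\Bigr|\;\le\;C\Bigl(\tfrac{\|\omega\|_{n_*'}}{\lambda}\Bigr)^{\!n_*'},
\]
then converts this into the desired weak-type $(n_*',n_*')$ bound.

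I do not foresee a genuine obstacle beyond bookkeeping of the two integrability exponents; the hard analytic work was already invested in deriving \eqref{43}–\eqref{44} from Lemma~\ref{keylemma_HS} and the resolvent bounds \eqref{eq_reso}, \eqref{eq_gradient_reso}, \eqref{dd}. The only mildly delicate point is the critical-end behaviour that forces weak-$L^{n_*'}$ at the endpoint, but this is entirely parallel to the endpoint analysis of the Riesz transform in \cite{HS}.
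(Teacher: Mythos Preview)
Your proof is correct and is precisely the argument the paper has in mind: the kernel bound \eqref{44} has rank-one structure, and the $L^p$ and weak-type bounds follow by H\"older's inequality together with the elementary integrability checks $\|f_k\|_p<\infty$ for $p>n_*'$ and $\|g_k\|_{p'}<\infty$ for $p<\infty$. The paper simply writes ``It follows that'' after \eqref{44} (and, in the parallel Proposition~\ref{prop_alter2}, ``The result follows by H\"older's inequality''), so your write-up is just a careful expansion of the omitted details.
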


\bigskip

\textit{Case 2. For $M$ defined by \eqref{M2} or \eqref{M3}.} Recall that in these cases
\begin{align*}
    G_4(k) = \sum_{j=1}^3 G_j(k)S(k) + \sum_{j=1}^{\mathcal{N}} \psi_i(\cdot) \langle \varphi_i, \cdot\rangle + \sum_{j=1}^{\mathcal{N}} \psi_i(\cdot) \langle \varphi_i, S(k) \cdot \rangle.
\end{align*}
We first consider the finite rank term $\mathcal{O:=}\sum_{j=1}^{\mathcal{N}} \psi_i(\cdot) \langle \varphi_i, \cdot\rangle$.

\begin{lemma}\label{lemma_O}
Let $M$ be a manifold defined by \eqref{M2} or \eqref{M3}. Then
\begin{align*}
    \sup_{0<k\le k_0}\|d\mathcal{O}\delta\|_{p\to p} \le C,\quad \forall 1\le p\le  \infty.
\end{align*}
\end{lemma}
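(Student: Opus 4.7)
The plan is straightforward: since $\mathcal{O}$ carries no $k$-dependence, the uniformity in $k$ is vacuous, and $d\mathcal{O}\delta$ turns out to be a finite-rank integral operator with smooth compactly supported kernel whose $L^p\to L^p$ norm is trivially controlled by Hölder's inequality. To begin, I would record that $\mathcal{O} = \sum_{j=1}^{\mathcal{N}} \psi_j(\cdot)\,\langle \varphi_j,\cdot\rangle$ does not depend on $k$, so it suffices to establish $\|d\mathcal{O}\delta\|_{p\to p}\le C$ as an unconditional estimate. For $\omega\in C_c^\infty(\Lambda^1(M))$ I compute
\[
(d\mathcal{O}\delta)\omega(z) \;=\; d_z\!\left(\sum_{j=1}^{\mathcal{N}} \psi_j(z)\int \varphi_j(z')\,\delta\omega(z')\,dz'\right) \;=\; \sum_{j=1}^{\mathcal{N}} d\psi_j(z)\,\langle \varphi_j,\delta\omega\rangle,
\]
since the integral factor is constant in $z$. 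Because $\varphi_j$ and $\omega$ are both compactly supported and smooth, integration by parts produces no boundary terms and gives $\langle \varphi_j,\delta\omega\rangle = \langle d\varphi_j,\omega\rangle$, whence
\[
(d\mathcal{O}\delta)\omega(z) \;=\; \sum_{j=1}^{\mathcal{N}} d\psi_j(z)\,\langle d\varphi_j,\omega\rangle.
\]

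This presents $d\mathcal{O}\delta$ as a finite-rank operator with smooth compactly supported integral kernel $\sum_j d\psi_j(z)\otimes d\varphi_j(z')$. Hölder's inequality then yields
\[
\|(d\mathcal{O}\delta)\omega\|_p \;\le\; \sum_{j=1}^{\mathcal{N}} \|d\psi_j\|_p\,\|d\varphi_j\|_{p'}\,\|\omega\|_p,
\]
and the right-hand side is finite for every $1\le p\le\infty$, because $\mathcal{N}$ is finite and $d\psi_j,d\varphi_j\in C_c^\infty$. Density of $C_c^\infty$ in $L^p$ for $1\le p<\infty$, together with the same pointwise estimate for $p=\infty$, extends the bound to all of $L^p(\Lambda^1(M))$.

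I expect no serious obstacle here: the lemma is essentially an elementary warm-up that isolates the $k$-independent, finite-rank summand of $G_4$. All of the genuine analytic difficulty in the cases \eqref{M2} and \eqref{M3} is concentrated in the two remaining pieces, $\sum_{j=1}^{3} G_j(k)S(k)$ and $\sum_j \psi_j(\cdot)\langle \varphi_j, S(k)\cdot\rangle$, where the small-$k$ behaviour of $S(k)=(I+\widetilde{E}(k))^{-1}-I$ must be controlled in conjunction with the kernel estimates on $G_1,G_2,G_3$ established in Section~\ref{sec3}.
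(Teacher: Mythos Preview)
Your proof is correct and follows essentially the same approach as the paper: both identify $d\mathcal{O}\delta$ as the $k$-independent finite-rank operator with kernel $\sum_{j=1}^{\mathcal{N}} d\psi_j(z)\,d\varphi_j(z')$ and conclude directly from the compact support and smoothness of $d\psi_j,\,d\varphi_j$. You simply spell out the integration-by-parts and H\"older steps that the paper leaves implicit.
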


\begin{proof}
Note that $\mathcal{O}$ is a finite rank operator independent of $k$, with $\psi_i,\varphi_i\in C_c^\infty(M)$. It is clear to see that
\begin{align*}
    d\mathcal{O} \delta (z,z') = \sum_{j=1}^{\mathcal{N}} d\psi_j(z) d\varphi_j(z'),
\end{align*}
and the result follows directly.

\end{proof}

Next, we handle the remainder, i.e., 
\begin{align*}
    \sum_{j=1}^3 G_j(k)S(k) + \mathcal{O}S(k).
\end{align*}
Introduce $\Tilde{G}(k) = \sum_{j=1}^3 G_j(k) + \mathcal{O}$. The kernel of its contribution in the Hodge projector is given by $\int d_z \Tilde{G}(k)(z,s) d_{z'}S(k)(s,z') ds$. Obviously, the crux is to get appropriate estimates for $|d_{z'}S(k)(s,z')|$. Note that by the definition of $S(k)$, one has $I+S(k) = (I+\Tilde{E}(k))^{-1}$ which implies that $S(k) = -\Tilde{E}(k) - \Tilde{E}(k)S(k) = -\Tilde{E}(k) - S(k) \Tilde{E}(k)$. By iterating this formula, one deduces
\begin{align}\label{eq_S}
    S(k) = -\Tilde{E}(k) + \Tilde{E}(k)^2 + \Tilde{E}(k) S(k) \Tilde{E}(k).
\end{align}
Define weight functions
\begin{align*}
    &\mu_1(x,k) = \begin{cases}
        1, & x\in K,\\
        e^{-ck|x|}, & x\in E_1,\\
        |x|^{2-n_2}e^{-ck|x|}, & x\in E_2,
    \end{cases}
    \quad &&\mu_2(x,k) = \begin{cases}
        1, & x\in K,\\
        |x|^{-1}e^{-ck|x|}, & x\in E_1,\\
        |x|^{1-n_2}e^{-ck|x|}, & x\in E_2,
    \end{cases}\\
    &\mu_3(x,k) = \begin{cases}
        1, & x\in K,\\
        |x|^{-2}e^{-ck|x|}, & x\in E_1,\\
        |x|^{-n_2}e^{-ck|x|}, & x\in E_2.
    \end{cases} \quad &&\mu_4(x,k) = \begin{cases}
        1, &x\in K,\\
        (1+|\log{k|x|}|) e^{-ck|x|}, &x\in E_1,\\
        |x|^{2-n_2}e^{-ck|x|}, & x\in E_2.
    \end{cases}
\end{align*}
By \eqref{eq_S}, an argument from \cite[Page 22]{HNS} yields that
\begin{align}\label{eq_S(k)}
    |S(k)(z,z')| \le C |z|^{-\infty} \mu_1(z',k).
\end{align}
Next, we estimate \( d_{z'}\tilde{E}(k)(z,z') \). Recall that
\[
\tilde{E}(k) = \tilde{E}_1(k) + (\Delta + k^2)\mathcal{O},
\]
and by \eqref{eq_error4},
\begin{align*}
    \tilde{E}_1(k)(z,z') &= E(k)(z,z') + \sum_{i=1}^2 \phi_i(z') \left[ (\Delta + k^2) u_i(z,k) - v_i(z) \right] (\Delta_i + k^2)^{-1}(z_i^0, z') \\
    &:= E(k)(z,z') + \tilde{E}_2(k)(z,z').
\end{align*}
Therefore,
\begin{align*}
    S(k) = -E(k) - \tilde{E}_2(k) - (\Delta + k^2)\mathcal{O} - S(k)E(k) - S(k)\tilde{E}_2(k) - S(k)(\Delta + k^2)\mathcal{O}.
\end{align*}
Consequently,
\begin{align}\label{G4_decomposition}
    dG_4(k)\delta - d\mathcal{O}\delta = - d\tilde{G}(k)(I + S(k)) \left(E(k) + (\Delta + k^2)\mathcal{O}\right)\delta - d\tilde{G}(k)(I + S(k))\tilde{E}_2(k)\delta.
\end{align}
For the first term, an argument similar to the case \( n_* \ge 3 \) shows that on the manifold \( M \) defined by \eqref{M2},
\[
|d_{z'} E(k)(z,z')| \le C \chi_{|z| \le 2} \mu_3(z',k), \quad \forall\ 0 < k \le k_0.
\]
The term \( (\Delta + k^2)\mathcal{O} \) is trivial since both \( \psi_i \) and \( \varphi_i \) are smooth and compactly supported. Hence, from \eqref{eq_S(k)} we obtain
\[
|d_{z'}(I + S(k)) \left(E(k) + (\Delta + k^2)\mathcal{O}\right)(s,z')| \le C |s|^{-\infty} \mu_3(z',k).
\]
By \cite[Proposition~3.6]{HNS}, we conclude the estimate
\begin{align}\label{eq_G4_1}
    \left|d\tilde{G}(k)(I + S(k)) \left(E(k) + (\Delta + k^2)\mathcal{O}\right)\delta(z,z') \right| \le C \mu_2(z,k) \mu_3(z',k).
\end{align}
For the term \( d\tilde{G}(k)(I + S(k)) \tilde{E}_2(k)\delta \), we claim that for \( \omega, \nu \in C_c^\infty(\Lambda^1(M)) \),
\begin{align}\label{claim}
    \lim_{k \to 0} \left| \left\langle d\tilde{G}(k)(I + S(k)) \tilde{E}_2(k)\delta\omega, \nu \right\rangle \right| = 0.
\end{align}
Indeed, the argument parallels Lemma~\ref{lemma_1}. For small \( k \), the above is bounded by
\begin{align}\label{eq_bili}
    C \iiint \mu_2(z,k) \left| d_{z'}(I + S(k)) \tilde{E}_2(k)(s,z') \right|\, ds\, |\omega(z')|\, dz'\, |\nu(z)|\, dz.
\end{align}
We now focus on \( \left|d_{z'}(I + S(k)) \tilde{E}_2(k)(s,z')\right| \). From the explicit formula, this is bounded by
\begin{align}\label{eq_E2_?}
    &\left|d_{z'} \tilde{E}_2(k)(s,z')\right| + \left|d_{z'} S(k)\tilde{E}_2(k)(s,z')\right| \\ \nonumber
    &\le \left|d_{z'} \tilde{E}_2(k)(s,z')\right| + C |s|^{-\infty} \int \mu_1(t,k) \left|d_{z'} \tilde{E}_2(k)(t,z')\right| dt.
\end{align}
Note that
\begin{align*}
    \left|d_{z'} \tilde{E}_2(k)(s,z')\right| &\le C \sum_{i=1}^2 |d\phi_i(z')| \mathrm{ilg}(k)^q |s|^{-\infty} |(\Delta_i + k^2)^{-1}(z_i^0,z')| \\
    &\quad + C \sum_{i=1}^2 |\phi_i(z')| \mathrm{ilg}(k)^q |s|^{-\infty} |d_{z'}(\Delta_i + k^2)^{-1}(z_i^0,z')| \\
    &\le C \mathrm{ilg}(k)^q |s|^{-\infty} \chi_{1 \le |z'| \le 2}(z') \mu_4(z',k) + \mathrm{ilg}(k)^q |s|^{-\infty} \mu_2(z',k).
\end{align*}
Substituting this into \eqref{eq_E2_?} gives the upper bound:
\begin{align}\label{eq_G4_L1}
    \left|d_{z'}(I + S(k)) \tilde{E}_2(k)(s,z')\right| \le C \mathrm{ilg}(k)^q |s|^{-\infty} \chi_{1 \le |z'| \le 2}(z') \mu_4(z',k) + \mathrm{ilg}(k)^q |s|^{-\infty} \mu_2(z',k).
\end{align}
Thus, \eqref{eq_bili} is then bounded by
\begin{align*}
    \textrm{ilg}(k)^q \int |\nu(z)| \left(\int_{1\le |z'|\le 2} \mu_4(z',k) |\omega(z')| dz' \right) dz + \textrm{ilg}(k)^q \int |\nu(z)| \left(\int_{|z'|\ge 1} \mu_2(z',0) |\omega(z')| dz' \right) dz. 
\end{align*}
By the proof of Lemma~\ref{lemma_1} and \eqref{Holder}, the integrands above are bounded by 
\begin{align*}
    \textrm{ilg}(k)^{q-1} \|\omega\|_1 |\nu(z)| \to 0, \quad \textrm{pointwise as}\quad  k\to 0.
\end{align*}
A suitable dominating function is then $C|\nu(z)|$ since $\nu \in C_c^\infty(\Lambda^1(M))$. The claim follows by Lebesgue dominated convergence theorem.
A similar argument also works for the case $M$ defined by \eqref{M3} and we omit details.

By \eqref{eq_G4_1}, the following result is immediate. 

\begin{lemma}\label{lemma_G4_2}
Let $M$ be defined by \eqref{M2} or \eqref{M3}. Then
\begin{align*}
    \sup_{0<k\le k_0} \left\| d \Tilde{G}(k) (I+S(k)) \left(E(k) + (\Delta+k^2)\mathcal{O}\right) \delta \right\|_{p\to p} \le C,
\end{align*}
for all $2<p<\infty$.
\end{lemma}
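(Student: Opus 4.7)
The plan is to reduce the operator bound to the pointwise kernel estimate \eqref{eq_G4_1} established just above, which exhibits the kernel as a separable tensor: $|K(z,z')|\le C\,\mu_2(z,k)\,\mu_3(z',k)$. Writing $T_k$ for the operator in question, Hölder's inequality applied to the inner $z'$-integral followed by the $L^p$-norm in $z$ yields the rank-one type estimate
\[
\|T_k\omega\|_p \;\le\; C\,\|\mu_2(\cdot,k)\|_{p}\,\|\mu_3(\cdot,k)\|_{p'}\,\|\omega\|_p,
\]
so the task reduces to showing that both weight norms are finite uniformly in $0<k\le k_0$.

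Since the exponential factors satisfy $e^{-ck|x|}\le 1$, it suffices to bound $\|\mu_2(\cdot,0)\|_{p}$ and $\|\mu_3(\cdot,0)\|_{p'}$, which are $k$-independent. Decompose $M = K \sqcup E_1 \sqcup E_2$; the compact core $K$ contributes a bounded amount and is negligible. On the parabolic end $E_1\cong\mathbb{R}^2\times M_1$ with volume element $\sim r\,dr$, the relevant integrals become $\int_1^\infty r^{-p}\,r\,dr$ for $\mu_2$ and $\int_1^\infty r^{-2p'}\,r\,dr$ for $\mu_3$, which converge iff $p>2$ and $p'>1$, respectively. On the non-parabolic end $E_2$ with volume $r^{n_2-1}\,dr$, the analogous integrals impose only the strictly weaker conditions $p>n_2'=n_2/(n_2-1)\le 3/2$ and $p'>1$. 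Intersecting the four conditions leaves precisely $2<p<\infty$, matching the claim.

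For $M$ defined by \eqref{M3} the argument is identical: both ends play the role of the parabolic $E_1$ above and yield the same threshold $p>2$. All the delicate work (the $|s|^{-\infty}$ decay of $S(k)$, the cancellation structure inside $\tilde E_2(k)$, and the application of \cite[Proposition~3.6]{HNS}) is already packaged into the pointwise bound \eqref{eq_G4_1}, so no further obstacle appears at this stage. The lower threshold $p>2$ is intrinsic to this method and reflects the critical $|x|^{-1}$ decay of $\mu_2$ on the parabolic end, where the measure $r\,dr$ puts this exponent exactly on the $L^p$-integrability borderline; no room is lost through the Schur-type argument itself.
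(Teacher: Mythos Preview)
Your argument is correct and is exactly what the paper intends: the paper states that the lemma is ``immediate'' from the pointwise separable bound \eqref{eq_G4_1}, and you have simply written out the H\"older/rank-one step and the weight integrability check that the paper leaves implicit. The threshold $p>2$ indeed comes from $\mu_2(\cdot,0)\big|_{E_1}\sim |x|^{-1}\notin L^2(E_1)$, and your treatment of \eqref{M3} (both ends parabolic) matches the paper's omitted case.
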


\section{$L^2$ Harmonic 1-Forms}

This section is devoted to the proof of Theorem~\ref{thm0}.

\begin{proof}
Let $\eta$ be a harmonic $1$-form on a manifold $M$ of the form
\[
M \;=\; (\mathbb{R}^{n_1} \times M_1) \# \cdots \# (\mathbb{R}^{n_l} \times M_l).
\]
Define $h \colon M \to \mathbb{R}$ by
\[
h(x) \;=\; \int_\Gamma \eta,
\]
where $\Gamma$ is any smooth path from a fixed base point $x_0 \in M$ to $x$. Since $\eta$ is closed ($d\eta=0$), the value of the integral is independent of the path $\Gamma$ provided $M$ is simply connected (i.e.\ $\pi_1(M)=0$).  

As $\eta$ is harmonic, it is smooth and satisfies $\eta \in L^2(M) \cap L^\infty(M)$. Hence,
\[
|h(x)| \;\leq\; C\, d(x,x_0),
\]
for some positive  constant $C$, where $d(x,x_0)$ denotes the geodesic distance in $M$.

By construction, $dh=\eta$. The space of all such functions $h$ with $dh=\eta \in L^2(M)$ is linear of dimension $l$, generated by functions $h_i$, $i=1,\dots,l$, where each $h_i$ tends to $1$ at the $i$-th end and to $0$ at all others. The corresponding space of $1$-forms spanned by $dh_i$ has dimension $l-1$, since $d1=0$ and $\sum_{i=1}^l h_i = 1$.

Moreover, on each end $E_j$ one has the pointwise estimate (see \cite[Lemma~2.7]{HS})
\[
|dh_i(x)| \;\sim\; C\, |x|^{1-n_j},
\]
which implies that $\eta = dh \in L^p(M)$ if and only if $p>n_j'$, where $n_j'$ is the critical Sobolev exponent associated with the $j$-th end.

Consequently, the harmonic projection operator satisfies
\[
\|\mathcal{H}\|_{L^p \to L^p} < \infty
\quad \Longleftrightarrow \quad
n_*' < p < n_*,
\]
where
$n_* := \min_j n_j $
and $n_*' := \max_j n_j'$.

This completes the proof of Theorem~\ref{thm0} for manifolds of type \eqref{M1}.  
The argument for manifolds of type \eqref{M2} is analogous, while the case \eqref{M3} is established in \cite[Proposition~3.3]{CDS}.
\end{proof}

\section{Proof of the main results}\label{sec6}

We now complete the proofs of Theorems~\ref{thm1} and \ref{thm2}.

\begin{proof}[Proof of Theorem~\ref{thm1}]
Let $M$ be defined by \eqref{M1}.  
Take $\omega, \nu \in C_c^\infty(\Lambda^1(M))$ with $\omega \in L^2 \cap L^p(\Lambda^1(M))$ and $\nu \in L^2 \cap L^{p'}(\Lambda^1(M))$, where $1<p<n_*$.  
Note that
\[
\mathcal{P} = \mathcal{P}(I-\mathcal{H}) = (I-\mathcal{H})\mathcal{P}.
\]
By duality, it suffices to analyze the bilinear form
\[
\langle \mathcal{P}\omega, \nu \rangle \;=\; \langle (I-\mathcal{H})\omega, \mathcal{P}\nu \rangle.
\]

Recall that $\mathcal{P}_k = d(\Delta+k^2)^{-1}\delta = \sum_{j=1}^4 d G_j(k)\delta$ for all $0<k\le k_0$. For such $k$ we write
\begin{align*}
    |\langle (I-\mathcal{H}) \omega, \mathcal{P}\nu \rangle|
    &\le |\langle (I-\mathcal{H}) \omega, \mathcal{P}_k\nu \rangle|
       + |\langle (I-\mathcal{H}) \omega, (\mathcal{P}-\mathcal{P}_k)\nu \rangle| \\
    &\le |\langle (I-\mathcal{H}) \omega, \mathcal{P}_k\nu \rangle|
       + \|(I-\mathcal{H})\omega\|_2 \,\|(\mathcal{P}-\mathcal{P}_k)\nu\|_2.
\end{align*}

We claim that $\mathcal{P}_k \to \mathcal{P}$ strongly in $L^2$, i.e.\
\[
\|(\mathcal{P}-\mathcal{P}_k)\nu\|_2 \to 0 \quad \text{as } k\to 0,
\qquad \forall \nu \in L^2(\Lambda^1(M)).
\]
Indeed,
\[
\mathcal{P}-\mathcal{P}_k 
= k^2 d\Delta^{-1}(\Delta+k^2)^{-1}\delta
= k^2 (\Delta_1+k^2)^{-1}\mathcal{P},
\]
where $\Delta_1$ denotes the Hodge Laplacian on $1$-forms. Let $E_{\Delta_1}(\lambda)$ be the spectral resolution of $\Delta_1$. Then
\begin{align}\label{eq_proof}
    \|(\mathcal{P}-\mathcal{P}_k)\nu\|_2^2
    = \int_0^\infty \left(\frac{k^2}{\lambda+k^2}\right)^2
      d\langle E_{\Delta_1}(\lambda)\mathcal{P}\nu, \mathcal{P}\nu \rangle.
\end{align}
For each $\lambda>0$, the integrand converges to $0$ as $k\to0$ and is uniformly bounded by $1$. Moreover, since $\mathcal{P}\nu$ lies in the orthogonal complement of $\mathcal{H}^1_{L^2}(M)$, we have $E_{\Delta_1}(\{0\})\mathcal{P}\nu=0$. Thus the integrand of \eqref{eq_proof} converges to $0$ almost everywhere with respect to the spectral measure. By the dominated convergence theorem,
\[
\|(\mathcal{P}-\mathcal{P}_k)\nu\|_2 \;\to\; 0 \quad \text{as } k\to0,
\]
since
\[
\int_0^\infty d\langle E_{\Delta_1}(\lambda)\mathcal{P}\nu, \mathcal{P}\nu \rangle
= \|\mathcal{P}\nu\|_2^2 \le \|\nu\|_2^2 < \infty.
\]

It remains to examine the limits
\begin{align}\label{1}
    \lim_{k\to0}\langle (I-\mathcal{H})\omega, \mathcal{P}_k\nu \rangle
    &= \lim_{k\to0}\langle (I-\mathcal{H})\omega, d(G_1(k)+G_3(k))\delta\nu \rangle \\ \label{2}
    &\quad + \lim_{k\to0}\langle (I-\mathcal{H})\omega, dG_2(k)\delta\nu \rangle \\ \label{3}
    &\quad + \lim_{k\to0}\langle (I-\mathcal{H})\omega, dG_4(k)\delta\nu \rangle,
\end{align}
provided each limit is well defined.

We begin with \eqref{1}. By Section~\ref{sec2.2}, this decomposes as
\begin{align*}
    \langle (I-\mathcal{H})\omega, \mathcal{T}\nu \rangle
    + \langle (I-\mathcal{H})\omega, \mathcal{S}\nu \rangle
    &= \sum_{j=2,4} \langle (I-\mathcal{H})\omega, \mathcal{T}_j\nu \rangle \\
    &\quad + \sum_{j=1}^2 \langle (I-\mathcal{H})\omega, \mathcal{R}_j\nu \rangle
           + \sum_{j=1}^2 \langle (I-\mathcal{H})\omega, \mathcal{Q}_j\nu \rangle.
\end{align*}
By Lemma~\ref{lemma_1}, the contributions of $\mathcal{R}_2$ and $\mathcal{Q}_2$ vanish as $k\to0$. By Lemmata~\ref{lem3.4}, \ref{lemma_bad1}, \ref{lemma_good1}, and \ref{lemma_bad2}, the remaining terms are bounded by
\[
C\,\|(I-\mathcal{H})\omega\|_p \|\nu\|_{p'}, \qquad 1<p<n_*,
\]
uniformly in $0<k\le k_0$.

For \eqref{2}, Lemma~\ref{lemma_G2} yields the same bound uniformly in $k$ for all $1<p<\infty$. The argument for \eqref{3} is analogous, by Lemma~\ref{lemma_G4_1}.  

In summary,
\[
\lim_{k\to0} \big|\langle (I-\mathcal{H})\omega, \mathcal{P}_k\nu \rangle\big|
\;\le\; C\,\|(I-\mathcal{H})\omega\|_p \|\nu\|_{p'}, \qquad 1<p<n_*.
\]
The endpoint estimate follows by interpolation and the weak-type bounds in Lemmata~\ref{lemma_bad1} and \ref{lemma_G4_1}. This completes the proof.
\end{proof}

\begin{proof}[Proof of Theorem~\ref{thm2}]
Let $M$ be defined by \eqref{M2} or \eqref{M3}. By a similar argument as above, the $G_1, G_2, G_3$ parts are bounded by $C\|(I-\mathcal{H})\omega\|_p \|\nu\|_{p'}$ for $1<p<2$. Next, for $G_4$, one splits it into $\Tilde{G}(k)S(k)+\mathcal{O}$. The $\mathcal{O}$ term can be handled by Lemma~\ref{lemma_O}, and one further decomposes the remainder as indicated in \eqref{G4_decomposition}. Thanks to the claim \eqref{claim}, part of it vanishes as $k\to 0$, and another part is treated in Lemma~\ref{lemma_G4_2}. 

In summary, we have shown that on $M$,
\begin{align*}
    \|\mathcal{P}\omega\|_p \;\le\; C \,\|(I-\mathcal{H})\omega\|_p,
    \qquad \forall\, 1<p<2.
\end{align*}
Since by Theorem~\ref{thm0} one has $\mathcal{H}_{L^2}^1(M)=\{0\}$, it follows that
$(I-\mathcal{H})\omega = \omega$,
and hence the inequality improves to
\begin{align*}
    \|\mathcal{P}\omega\|_p \;\le\; C \,\|\omega\|_p,
    \qquad \forall\, 1<p<2.
\end{align*}
The extension to all $p$ in the admissible range then follows by duality and the self-adjointness of $\mathcal{P}$.

\end{proof}

\section{Further remarks}\label{Sec7}

\subsection{An alternative method: interior harmonic annihilation}

In this subsection, we call back a claim we made at the end of the introduction. Namely, we claim that Theorem~\ref{thm1} can be alternatively proved by a so-called interior harmonic annihilation method. Given the symmetry of the resolvent kernel, i.e., $(\Delta+k^2)^{-1}(z,z') = (\Delta+k^2)^{-1}(z',z)$, we obtain by parametrix construction that
\begin{align*}
    (\Delta+k^2)^{-1}(z,z') = \sum_{j=1}^4 \tilde{G}_j(k)(z,z'),
\end{align*}
where $\tilde{G}_j(k) = G_j(k)$ for $j=1,2$ and 
\begin{align*}
    \tilde{G}_3(k)(z,z') = \sum_{i=1}^l (\Delta_{i} +k^2)^{-1}(z, z_i^0) u_i(z',k)\phi_i(z).
\end{align*}
Moreover, the error term $\mathcal{E}(k)$ defined by $(\Delta+k^2)\sum_{j=1}^3 \tilde{G}_j(k) = I + \mathcal{E}(k)$ equals
\begin{align*}
\mathcal{E}(k)(z,z') = \sum_{i = 1}^l \Big( -2 \nabla \phi_i(z') \phi_i(z) \big(\nabla_{z'} (\Delta_{i} + k^2)^{-1}(z,z') - \nabla_{z'} G_{\textrm{int}}(z,z')\big)\\ 
    +\phi_i(z) v_i(z') \big( -(\Delta_{i} + k^2)^{-1}(z,z') + G_{\textrm{int}}(k)(z,z') + (\Delta_{i} + k^2)^{-1}(z, z_i^0)\big) \Big)\\ 
    + \big( (\Delta + k^2)G_{\textrm{int}}(k)(z,z') - \delta_{z}(z')\big) \left(1-\sum_{i=1}^l \phi_i(z)\phi_i(z')\right).
\end{align*}
It is obvious that $\mathcal{E}(k)$ has compact support in the right variable $z'$. Then by Lemma~\ref{keylemma_HS}, one defines \begin{align*}
    \tilde{G}_4(k)(z,z') = - (\Delta+k^2)^{-1} \left(E(k)(z, \cdot)\right)(z') = - \int (\Delta + k^2)^{-1}(z',s) \mathcal{E}(k)(z,s) ds.
\end{align*} 
Next, similar to Section~\ref{sec2.2}, we decompose $d \left(\tilde{G}_1(k)+\tilde{G}_3(k) \right) \delta \omega$ into 
\begin{align*}
    \mathcal{T}_1\omega(z) + \mathcal{T}_2 \omega(z) + \tilde{\mathcal{R}}_1 \omega(z) + \tilde{\mathcal{R}}_2 \omega(z) + \tilde{\mathcal{Q}}_1 \omega(z) + \tilde{\mathcal{Q}}_2 \omega(z),
\end{align*}
where $\mathcal{T}_1, \mathcal{T}_2$ are defined as before:
\begin{align*}
    \mathcal{T}_1 \omega(z) &= d\phi_i(z)\int d_{z'}(\Delta_i +k^2)^{-1}(z,z') \phi_i(z')\cdot \omega(z') dz',\\
    \mathcal{T}_2 \omega(z) &= \phi_i(z) \int d_{z}(\Delta_i +k^2)^{-1}(z,z') \delta(\phi_i \omega)(z') dz',
\end{align*}
and
\begin{align*}
    \tilde{\mathcal{R}}_1\omega &= \phi_i(z) \int  d_{z}\left[(\Delta_i + k^2)^{-1}(z,z') - (\Delta_i + k^2)^{-1}(z,0)\right] d\phi_i(z') \cdot \omega(z') dz',\\
    \tilde{\mathcal{R}}_2\omega &= d(\Delta_i+k^2)^{-1}(z,0) \phi_i(z) \int d_{z'}\left[u_i(z',k) +  \phi_i(z')\right] \cdot \omega(z') dz',\\
    \tilde{\mathcal{Q}}_1\omega &= d\phi_i(z) \int \left[ (\Delta_i + k^2)^{-1}(z,z') - (\Delta_i + k^2)^{-1}(z,0) \right] d\phi_i(z') \cdot \omega(z')dz',\\
    \tilde{\mathcal{Q}}_2\omega &= (\Delta_i+k^2)^{-1}(z,0) d\phi_i(z) \int d_{z'}\left[u_i(z',k) +  \phi_i(z')\right] \cdot \omega(z') dz'.
\end{align*}
Now, by Lemma~\ref{lem3.4} and Lemma~\ref{lemma_bad2} ($\textit{Case}$~1), one confirms that $\mathcal{T}_2$ and $\tilde{Q}_1$ are bounded on $L^p$ for all $1<p<\infty$. Therefore, it suffices to prove

\begin{proposition}\label{prop_alter1}
Under the assumptions of Theorem~\ref{thm1}, the following statements hold:

$(\romannumeral1)$ $\mathcal{T}_1$ is bounded on $L^p$ for $1<p<n_*$, uniformly in $k$ for all $k \le k_0$.

$(\romannumeral2)$ $\tilde{\mathcal{R}}_1$ is bounded on $L^p$ for all $1<p<\infty$, uniformly in $k$ for all $k \le k_0$.

$(\romannumeral3)$ For any $\omega \in C_c^\infty(\Lambda^1(M)) \subset L^2\cap L^{p}(\Lambda^1(M))$ ($1<p<\infty$), we have
\begin{align*}
\lim_{k\to 0} \left|\left( \tilde{\mathcal{R}}_2 + \tilde{\mathcal{Q}}_2 \right)  (I-\mathcal{H})\omega \right| = 0.
\end{align*}

\end{proposition}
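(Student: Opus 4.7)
For $\mathcal{T}_1\omega(z)=d\phi_i(z)\int d_{z'}(\Delta_i+k^2)^{-1}(z,z')\phi_i(z')\cdot\omega(z')\,dz'$, the prefactor $d\phi_i(z)$ localises the image to the compact annulus $\{1\le|z|\le 2\}$, so it suffices to bound the inner integral pointwise in $z$ by $C\|\omega\|_p$. The gradient estimate \eqref{eq_gradient_reso} yields $|d_{z'}(\Delta_i+k^2)^{-1}(z,z')|\le C(d(z,z')^{1-N}+d(z,z')^{1-n_i})e^{-ckd(z,z')}$ uniformly in $k\le k_0$. I split the $z'$-integration into the near-diagonal region $d(z,z')\le 4$, which is handled by a standard Schur test as the kernel is a Riesz potential of order one, and the far region $|z'|\gtrsim 1$, where H\"older's inequality requires $\phi_i(z')|z'|^{1-n_i}\in L^{p'}(E_i)$ and therefore forces $p<n_i$. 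Choosing $p<n_*$ then delivers uniform $L^p$-boundedness simultaneously on every end.

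\textbf{Plan for (ii).} Part (ii) mirrors Lemma~\ref{lemma_good1} after interchanging the roles of $z$ and $z'$. Setting $\sigma=\sup_{y\in\mathrm{supp}(d\phi_i)}d(y,0)$, I treat the kernel of $\tilde{\mathcal{R}}_1$ separately on the local region $d(0,z)\le 2\sigma$ and the remote region $d(0,z)\ge 2\sigma$. On the local part, the gradient bound \eqref{eq_gradient_reso} plugs directly into Schur's test, giving uniform $L^p$-boundedness for every $1\le p\le\infty$. On the remote part, a mean-value argument applied to $(\Delta_i+k^2)^{-1}(\cdot,z')-(\Delta_i+k^2)^{-1}(\cdot,0)$, combined with the mixed second-derivative bound \eqref{dd}, reduces the kernel to the factorised form $C\chi_{E_i}(z)\,|z|^{-n_i}\,\chi_{\mathrm{supp}(d\phi_i)}(z')$, which acts as a rank-one-type operator bounded on every $L^p$, $1<p<\infty$, uniformly in $k\le k_0$.

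\textbf{Plan for (iii).} The central observation is that both $\tilde{\mathcal{R}}_2(I-\mathcal{H})\omega$ and $\tilde{\mathcal{Q}}_2(I-\mathcal{H})\omega$ share the common scalar factor
\[
\mathcal{I}_k(\omega)\;=\;\int d_{z'}\bigl[u_i(z',k)+\phi_i(z')\bigr]\cdot(I-\mathcal{H})\omega(z')\,dz',
\]
multiplied by the spatial factors $\phi_i(z)\,d(\Delta_i+k^2)^{-1}(z,0)$ and $d\phi_i(z)\,(\Delta_i+k^2)^{-1}(z,0)$, both uniformly bounded pointwise in $k\le k_0$ by Lemma~\ref{keylemma_HS}. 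The strategy is to prove $\mathcal{I}_k(\omega)=O(k^2)$ and then conclude by dominated convergence. Writing $(I-\mathcal{H})\omega=d\xi+\delta\eta$ with $\xi\in C_c^\infty(M)$, $\eta\in C_c^\infty(\Lambda^2(M))$, integration by parts gives $\int\langle d(u_i+\phi_i),\delta\eta\rangle\,dz'=\int\langle d^2(u_i+\phi_i),\eta\rangle\,dz'=0$ through $d^2=0$, and $\int\langle d(u_i+\phi_i),d\xi\rangle\,dz'=\int\Delta(u_i+\phi_i)\,\xi\,dz'=-k^2\int u_i(\cdot,k)\,\xi\,dz'$, using the identity $\Delta(u_i+\phi_i)=-k^2u_i$ inherited from the construction in Lemma~\ref{keylemma_HS}. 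Uniform boundedness of $u_i$ together with $\xi\in C_c^\infty(M)$ then yields $\mathcal{I}_k(\omega)=O(k^2)$, which in turn drives $|(\tilde{\mathcal{R}}_2+\tilde{\mathcal{Q}}_2)(I-\mathcal{H})\omega|$ to zero.

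\textbf{Main obstacle.} The principal technical difficulty is the density issue already encountered in Lemma~\ref{lemma_1} and Remark~\ref{remark_density}: $(I-\mathcal{H})\omega$ lies only in the $L^2$-closure of $dC_c^\infty(M)\oplus\delta C_c^\infty(\Lambda^2(M))$ and need not literally equal $d\xi+\delta\eta$ with compactly supported $\xi,\eta$. I would therefore approximate $(I-\mathcal{H})\omega$ in $L^2$ and exploit the uniform bound $\|d(u_i+\phi_i)\|_2\le C$ from Remark~\ref{remark_density} to absorb the approximation error by an arbitrary $\varepsilon>0$ before sending $k\to 0$, exactly as in the proof of Lemma~\ref{lemma_1}.
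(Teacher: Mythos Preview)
Your proposal is correct and follows essentially the same approach as the paper: for (i) the near-diagonal Schur test plus far-region H\"older argument, for (ii) the role-reversed version of Lemma~\ref{lemma_good1}, and for (iii) the factorisation into the common scalar integral $\mathcal{I}_k(\omega)$ followed by integration by parts using $\Delta(u_i+\phi_i)=-k^2u_i$ and the density argument of Remark~\ref{remark_density}. The only cosmetic difference is that the paper bounds the spatial prefactors in (iii) by a single constant and concludes directly, whereas you invoke dominated convergence; this is harmless since the scalar factor $\mathcal{I}_k(\omega)=O(k^2)$ already forces pointwise convergence to zero.
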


\begin{proof}
For $(\romannumeral1)$, the proof is parallel to Lemma~\ref{lemma_bad1}. Indeed, for $d(z,z')\le 4$, the kernel is bounded by
\begin{align*}
    |\mathcal{T}_1(z,z')| \le C \chi_{\textrm{supp}(d\phi_i)}(z) d(z,z')^{1-N} \chi_{E_i}(z') \chi_{d(z,z')\le 4}.
\end{align*}
A routine Shur's test guarantees that the above kernel acts as a bounded operator on $L^p$ for all $1\le p\le \infty$. As for $d(z,z')\ge 8$, one has
\begin{align*}
    |\mathcal{T}_1\omega(z)| \le C \chi_{\textrm{supp}(d\phi_i)}(z) \int_{E_i; d(z,z')\ge 8} \frac{|\omega(z')|}{d(z,z')^{n_i-1}} dz'.
\end{align*}
Observe that $|z|\le 2$. It is easy to see that $d(z,z')\ge 2|z'|/3$. Hence, the above is bounded by
\begin{align*}
    \chi_{|z|\le 2} \int_{E_i} \frac{|\omega(z')|}{|z'|^{n_i-1}} dz'.
\end{align*}
By Hölder's inequality, $\mathcal{T}_1$ is bounded on $L^p$ for $1<p<n_i$. The result follows easily.

For $(\romannumeral2)$, the proof follows by Lemma~\ref{lemma_good1}, with the roles of $z,z'$ switched.

To this end, we apply a similar argument as in Lemma~\ref{lemma_1} to verify $(\romannumeral3)$. Observe that on $M$ defined by \eqref{M1}, 
\begin{align*}
    |d(\Delta_i+k^2)^{-1}(z,0) \phi_i(z)| + |d\phi_i(z)||(\Delta_i+k^2)^{-1}(z,0)| \le C |z|^{1-n_i} \chi_{|z|\ge 1} + |z|^{2-n_i} \chi_{1\le |z|\le 2} \le C
\end{align*}
uniformly in $0<k\le k_0$. It is enough to show 
\begin{align}
    \lim_{k\to 0} \left| \int d_{z'}\left[u_i(z',k) +  \phi_i(z')\right] \cdot (I-\mathcal{H})\omega(z') dz'  \right| = 0
\end{align}
for $\omega \in C_c^\infty(\Lambda^1(M))$.

By Remark~\ref{remark_density}, it suffices to assume that $(I-\mathcal{H})\omega = d\xi + \delta \eta$ with $\xi \in C_c^\infty(M)$ and $\eta \in C_c^\infty(\Lambda^2(M))$. As a consequence of integration by parts, we only need to verify that 
\begin{align}\label{asd}
    \lim_{k\to 0} \left| \int d_{z'}\left[u_i(z',k) +  \phi_i(z')\right] \cdot d\xi(z') dz'  \right| &= \lim_{k\to 0} \left| \int \Delta_{z'}\left[u_i(z',k) +  \phi_i(z')\right]  \xi(z') dz'  \right|\\ \nonumber
    &= \lim_{k\to 0} \left| \int -k^2 u_i(z',k) \xi(z') dz' \right|\\ \nonumber
    &\le \lim_{k\to 0} \int k^2 |u_i(z',k)| |\xi(z')| dz'\\ \nonumber
    &=0.
\end{align}
By Lemma~\ref{keylemma_HS}, $|u_i(z',k)|\le (1+|z'|)^{2-n_j}$ for $z'\in E_j$ uniformly in $0<k\le k_0$. We confirm that the integrand of \eqref{asd} pointwise converges to 0 as $k \to 0$. In addition, since $(1+|z'|)^{2-n_*}|\xi(z')|$ is integrable, we conclude the limit \eqref{asd} is 0, completing the proof.

\end{proof}

\medskip

Next, by Lemma~\ref{lemma_G2}, we are left to verify: 

\begin{proposition}\label{prop_alter2}
Under the assumptions of Theorem~\ref{thm1}, 
\begin{align*}
    \sup_{0<k\le k_0} \| d \tilde{G}_4(k) \delta \|_{p\to p} \le C
\end{align*}
for all $1<p<n_*$.
\end{proposition}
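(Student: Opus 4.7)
The plan is to derive Proposition~\ref{prop_alter2} from Lemma~\ref{lemma_G4_1} by a single duality step. The range $n_*'<p<\infty$ supplied by that lemma is exactly the conjugate of the range $1<p<n_*$ requested here, which strongly suggests that $d\tilde{G}_4(k)\delta$ is nothing but the formal adjoint of $dG_4(k)\delta$.

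The first step is to verify the kernel identity
\[
\tilde{G}_4(k)(z,z') \;=\; G_4(k)(z',z).
\]
By construction $\mathcal{E}(k)(z,s) = E(k)(s,z)$, and the scalar resolvent kernel is symmetric, so
\begin{align*}
\tilde{G}_4(k)(z,z')
&= -\!\int (\Delta+k^2)^{-1}(z',s)\,\mathcal{E}(k)(z,s)\,ds \\
&= -\!\int (\Delta+k^2)^{-1}(s,z')\,E(k)(s,z)\,ds
\;=\; G_4(k)(z',z).
\end{align*}
In particular, viewed as scalar operators, $\tilde{G}_4(k) = G_4(k)^{\ast}$. Using the standard adjoint identities $d^{\ast}=\delta$ and $\delta^{\ast}=d$ (valid on compactly supported smooth forms), one then obtains
\[
\bigl(dG_4(k)\delta\bigr)^{\ast} \;=\; \delta^{\ast}\, G_4(k)^{\ast}\, d^{\ast} \;=\; d\,\tilde{G}_4(k)\,\delta.
\]

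The conclusion is then immediate. By Lemma~\ref{lemma_G4_1},
\[
\sup_{0<k\le k_0}\|dG_4(k)\delta\|_{L^p\to L^p} \;\le\; C,\qquad \forall\, n_*' < p < \infty,
\]
and the standard operator-norm identity $\|T^{\ast}\|_{q\to q} = \|T\|_{q'\to q'}$ then yields $\sup_{0<k\le k_0}\|d\tilde{G}_4(k)\delta\|_{L^{p'}\to L^{p'}} \le C$ on the same range, which is exactly $1 < p' < n_*$. Relabelling $p'$ as $p$ produces the claim.

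The only point where genuine care is required, and which I would regard as the main (though modest) obstacle, is the rigorous justification of the pairing $\langle dG_4(k)\delta\omega,\nu\rangle = \langle \omega,\, d\tilde{G}_4(k)\delta\nu\rangle$ for $\omega,\nu\in C_c^\infty(\Lambda^1(M))$. This reduces to Fubini together with two integration-by-parts, and the boundary terms at infinity vanish thanks to the pointwise decay of $G_4(k)$ and its first derivatives along the ends, as supplied by Lemma~\ref{keylemma_HS} and the resolvent bounds \eqref{eq_reso}--\eqref{eq_gradient_reso}. No analytic input beyond what has already been developed in the paper is required.
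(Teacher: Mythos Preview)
Your proof is correct and takes a genuinely different route from the paper. The paper redoes the kernel estimates from scratch, obtaining the pointwise bound
\[
|d\tilde{G}_4(k)\delta(z,z')|\;\lesssim\;|z'|^{1-n_i}e^{-ck|z'|}\,|z|^{-n_j}e^{-ck|z|}
\]
(parallel to \eqref{43}--\eqref{44}, with the roles of $z,z'$ swapped) and then concludes by H\"older. You instead observe that $\tilde{G}_4(k)(z,z')=G_4(k)(z',z)$, hence $d\tilde{G}_4(k)\delta=(dG_4(k)\delta)^*$, and invoke Lemma~\ref{lemma_G4_1} by duality. Your approach is more economical; the paper's approach produces explicit pointwise kernel bounds that could be reused elsewhere.

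One small caution: the identity $\mathcal{E}(k)(z,s)=E(k)(s,z)$ relies implicitly on the symmetry of the interior parametrix $G_{\mathrm{int}}$, which the paper does not spell out (though it is standard and already tacitly used when the paper asserts $\tilde{G}_2=G_2$). A cleaner way to see $\tilde{G}_4(k)(z,z')=G_4(k)(z',z)$, avoiding the error terms altogether, is to write
\[
\tilde{G}_4(k)=(\Delta+k^2)^{-1}-\sum_{j=1}^3\tilde{G}_j(k),
\qquad
G_4(k)=(\Delta+k^2)^{-1}-\sum_{j=1}^3 G_j(k),
\]
and use the resolvent symmetry together with $\tilde{G}_j(k)(z,z')=G_j(k)(z',z)$ for $j=1,2,3$, which the paper records at the start of Section~\ref{Sec7}. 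With that in hand your duality step is immediate.
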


\begin{proof}
By a parallel estimate of \eqref{43}, \eqref{44}, one gets
\begin{align}
     \| d_{z}\mathcal{E}(k)(z, \cdot)\|_\infty \le C \begin{cases}
         1, & z\in K,\\
        |z|^{-n_i} e^{-ck|z|}, & z\in E_i,
     \end{cases}
\end{align}
and
\begin{align}
    |d\tilde{G}_4(k)\delta (z,z')| \le C \begin{cases}
        1, & z'\in K, z\in K,\\
        |z|^{-n_j} e^{-ck|z|}, & z'\in K, z\in E_j,\\
        |z'|^{1-n_i} e^{-ck|z'|}, & z'\in E_i, z\in K,\\
        |z'|^{1-n_i} e^{-ck|z'|} |z|^{-n_j} e^{-ck|z|}, & z'\in E_i, z\in E_j.
    \end{cases}
\end{align}
The result follows by Hölder's inequality.
\end{proof}

Now, we may proceed a similar argument from Section~\ref{sec6} to complete the proof of Theorem~\ref{thm1}.

\begin{proof}[Alternative proof of Theorem~\ref{thm1}]
Let $\omega, \nu \in C_c^\infty(\Lambda^1(M))$ such that $\omega \in L^2 \cap L^p(\Lambda^1(M))$ and $\nu\in L^2\cap L^{p'}(\Lambda^1(M))$, where $1<p<n_*$. Note that
\begin{align*}
    \|\mathcal{P}\omega\|_p = \| \mathcal{P}(I-\mathcal{H})\omega\|_p = \sup_{\nu \in C_c^\infty(\Lambda^1(M)); \|\nu\|_{p'}\le 1} \langle \mathcal{P}(I-\mathcal{H})\omega, \nu \rangle.
\end{align*}
Since $\mathcal{P}_k \to \mathcal{P}$ strongly in $L^2$ as $k \to 0$ (see the argument around \eqref{eq_proof}), it suffices to show
\begin{align*}
    \lim_{k\to 0} \left| \langle \mathcal{P}_k(I-\mathcal{H})\omega, \nu \rangle \right| \le C \|(I-\mathcal{H})\omega\|_p \|\nu\|_{p'}.
\end{align*}
Decompose $\mathcal{P}_k = \sum_{j=1}^2 \mathcal{T}_j + \sum_{j=1}^2 \tilde{\mathcal{R}}_j + \sum_{j=1}^2 \tilde{\mathcal{Q}}_j + \sum_{j=2,4} d \tilde{G}_j(k) \delta$. The result follows by Proposition~\ref{prop_alter1}, Lemma~\ref{lem3.4}, Lemma~\ref{lemma_bad2}, Lemma~\ref{lemma_G2} and Proposition~\ref{prop_alter2}. 

\end{proof}

\subsection{Remark on a previous result}

In this subsection, we explain in detail why \cite[Lemma~0.1]{AC} needs some trivial but significant modification. To begin with, we recall their statement including its original proof.

\begin{lemma}\cite[Lemma~0.1]{AC}\label{le_AC}
Let $M$ be a compete Riemannian manifold. Suppose that the Hodge projector onto exact part, i.e. $\mathcal{P}$, is bounded on $L^p$ for some $1<p<\infty$. That is 
\begin{align*}
    \|d \Delta^{-1} \delta \omega \|_p \le C \|\omega\|_p, \quad \forall \omega \in C_c^\infty(\Lambda^1(M)).
\end{align*}
Moreover, assume that the following reverse Riesz inequality holds:
\begin{align*}
    \|\Delta^{1/2}f\|_p \le C \|\nabla f\|_p, \quad \forall f\in C_c^\infty(M).
\end{align*}
Then, the Riesz transform is bounded on $L^{p'}$.
\end{lemma}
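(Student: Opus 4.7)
The strategy is the direct chain of inequalities already recorded in \eqref{AC_lemma}. Given $\omega \in C_c^\infty(\Lambda^1(M))$, set $f := \Delta^{-1}\delta\omega$ and observe that, formally, $\Delta^{-1/2}\delta\omega = \Delta^{1/2} f$. Applying the reverse Riesz hypothesis to $f$ and using $\nabla f = df$ (under the standard identification of the gradient with the exterior derivative on functions), followed by the $L^p$-boundedness of $\mathcal{P} = d\Delta^{-1}\delta$, one obtains
\[
\|\Delta^{-1/2}\delta\omega\|_p \;=\; \|\Delta^{1/2} f\|_p \;\le\; C\|\nabla f\|_p \;=\; C\|d\Delta^{-1}\delta\omega\|_p \;=\; C\|\mathcal{P}\omega\|_p \;\le\; C\|\omega\|_p.
\]
The proof then concludes by dualising: since $(\Delta^{-1/2}\delta)^{*} = d\Delta^{-1/2} = R$ on the natural dense domain, the bound $\|\Delta^{-1/2}\delta\|_{p\to p}\le C$ transfers to $\|R\|_{p'\to p'}\le C$.

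The execution requires two preliminary verifications. First, one checks the adjoint identity: for $f\in C_c^\infty(M)$ and $\omega \in C_c^\infty(\Lambda^1(M))$,
\[
\langle d\Delta^{-1/2} f,\omega\rangle_{L^2(\Lambda^1)} \;=\; \langle \Delta^{-1/2} f,\delta \omega\rangle_{L^2(\Lambda^0)} \;=\; \langle f,\Delta^{-1/2}\delta\omega\rangle_{L^2(\Lambda^0)},
\]
using the self-adjointness of $\Delta^{-1/2}$ (spectral calculus) and the formal $\delta = d^{*}$ relation, so $R = d\Delta^{-1/2}$ is indeed the adjoint of $\Delta^{-1/2}\delta$. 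Second, since $\delta C_c^\infty(\Lambda^1(M))$ is dense in the coexact $L^p$-closure (sitting inside $L^p(M)$ after quotienting by the kernel), the duality step promotes the estimate from the dense domain to a full $L^{p'}$-bound for $R$ on $C_c^\infty(M)$.

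The genuine difficulty, and the very point highlighted in Remark~\ref{klopot}(4), is the applicability of the reverse Riesz hypothesis to the test function $f = \Delta^{-1}\delta\omega$. The hypothesis is stated for $f\in C_c^\infty(M)$, but $\Delta^{-1}\delta\omega$ is a smooth function with only limited decay at infinity and is not compactly supported. One would attempt a cut-off argument: choose $\chi_R \uparrow 1$ with $|\nabla \chi_R|\to 0$, apply reverse Riesz to $\chi_R f$, and pass to the limit, controlling the commutator error $\Delta^{1/2}(\chi_R f)-\chi_R\Delta^{1/2} f$ along with $\nabla(\chi_R f)-\chi_R\nabla f$. The main obstacle is precisely that these error terms need not vanish in $L^p$: on $\mathbb{R}^2 \# \mathbb{R}^2$, for instance, $\Delta^{-1}\delta\omega$ can fail to decay fast enough, and this is exactly the mechanism by which Lemma~\ref{le_AC} breaks down in dimension two, see the counterexample in item (5) of Remark~\ref{klopot}. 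Consequently, a rigorous proof of Lemma~\ref{le_AC} must add a hypothesis ensuring that the class of admissible functions for the reverse Riesz inequality is large enough to contain $\Delta^{-1}\delta\omega$ for all $\omega\in C_c^\infty(\Lambda^1(M))$; absent such an assumption, the statement as written needs the correction signalled in Section~\ref{Sec7}.
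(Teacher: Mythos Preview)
Your proposal is correct and follows exactly the paper's approach: the chain $\|\Delta^{-1/2}\delta\omega\|_p = \|\Delta^{1/2}\Delta^{-1}\delta\omega\|_p \le C\|d\Delta^{-1}\delta\omega\|_p \le C\|\omega\|_p$ followed by duality is precisely the argument the paper records (as the original proof from \cite{AC}). Your additional discussion of the subtlety---that the reverse Riesz hypothesis is only assumed on $C_c^\infty(M)$ while $f=\Delta^{-1}\delta\omega$ lacks compact support---anticipates exactly the point the paper makes immediately after the lemma, and is the reason the statement requires the remedies in Lemmata~\ref{le_remedy1} and \ref{le_remedy2}.
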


\begin{proof}
Consider operator $\Delta^{-1/2}\delta$, the adjoint of the Riesz transform. Then it follows by assumptions above that
\begin{align}\label{eq_inequalities}
    \|\Delta^{-1/2}\delta \omega\|_p = \|\Delta^{1/2} \Delta^{-1} \delta \omega\|_p \le C \|d \Delta^{-1} \delta \omega\|_p \le C \|\omega\|_p.
\end{align}
The result follows by duality.

\end{proof}

The above argument is clear except the step where we apply reverse Riesz inequality. Indeed, let us consider $M = (\mathbb{R}^2 \times \mathbb{S}^{2}) \# \mathbb{R}^4$. Apparently, $\pi_1(M) = \{0\}$. By Theorem~\ref{thm0}, the space of $L^2$-harmonic 1-form on $M$ is trivial. Consequently, Theorem~\ref{thm1} together with a duality argument as in the proof of Theorem~\ref{thm2} show that $\mathcal{P}$ is bounded on $L^p$ for all $1<p<\infty$. Now, if Lemma~\ref{le_AC} holds on $M$, then \cite[Theorem~1.3]{He2} (the boundedness of reverse inequality) guarantees that the Riesz transform is bounded for all $1<p<\infty$, contradicting the main result of \cite{HNS}.

To better explain why reverse Riesz inequality should not hold in the above example, we need to recall some arguments from \cite{He2}. Let us recall the asymptotic formula \eqref{asy}: on a compactification of $\mathbb{R}^n \# \mathbb{R}^n$ ($n\ge 3$), 
\begin{align*}
    \Delta^{-1/2}(x,y) \sim \sum_{j = n-1}^\infty a_j(x)\,|y|^{-j}, \quad x\in \overline{M},\quad y \to \partial \overline{M},
\end{align*}
where the leading coefficient $a_{n-1}$ is a non-trivial bounded harmonic function. As discussed in the introduction (see also \cite[Introduction]{CCH}), the leading coefficient, $d a_{n-1}(x)$, is the main obstacle for the boundedness of Riesz transform for $p\ge n$. However, for suitable $f,g$, one has by the positivity and self-adjointness of $\Delta$ that  
\begin{align}\label{birrt}
    \langle \Delta^{1/2}f, g \rangle = \langle \nabla f, \nabla \Delta^{-1/2} g \rangle.
\end{align}
Suppose that $g$ is supported near the boundary $\partial \overline{M}$, the above asymptotic expansion tells us the problematic leading term of \eqref{birrt} contributes
\begin{align}\label{asyrrt}
    \left \langle  \nabla f, \nabla a_{n-1}(\cdot) \int |y|^{1-n} g(y) dy      \right \rangle = \left \langle  f, \Delta a_{n-1}(\cdot) \int |y|^{1-n} g(y) dy \right\rangle = 0.
\end{align}
This implies that in the bilinear form \eqref{birrt}, the problematic leading term is annihilated by $\nabla f$, convincing us the natural lower bound (this lower bound comes from the duality property between Riesz and reverse Riesz inequalities. In short, the $L^p$ boundedness of Riesz transform directly implies the $L^{p'}$ boundedness of the reverse Riesz inequality; see \cite{CD} for details) of the boundedness of the reverse inequality (i.e., $p=n'$) can be released.

Now, on the example $M=(\mathbb{R}^2 \times \mathbb{S}^{2}) \# \mathbb{R}^4$, we mention that the main issue obstructing the applicability of reverse Riesz inequality is the order of decay of the function $\Delta^{-1}\delta \omega$ at the infinity of the parabolic end. Indeed, in the above example, for $\omega \in C_c^\infty(\Lambda^1(M))$ (so, $\delta \omega \in C_c^\infty(M)$ only), one considers Poisson equation:
\begin{align*}
    \Delta u = \delta \omega.
\end{align*}
By \cite[Lemma~2.9]{HNS}, there exists a unique bounded smooth solution $u$ to the above equation which satisfies asymptotic property: as $|x|\to \infty$ (here, $|x|:= \sup_{a\in K}d(a,x) \ge 1$),
\begin{align*}
    u(x) \sim \begin{cases}
        c, & x\in E_1 = \mathbb{R}^2 \times \mathbb{S}^2,\\
        |x|^{-2}, & x\in E_2 = \mathbb{R}^{4},
    \end{cases}
\end{align*}
for some potentially non-zero constant $c$. However, one checks that one of the key steps of proving reverse Riesz inequality in \cite[Page 9, above equation (3.6)]{He2} is the vanishing of the boundary term of the following integration by parts, 

\begin{align}\label{boundary}
    \int_M \nabla u(x) \nabla \mathcal{U}(x) dx = \int_M u(x) \Delta \mathcal{U}(x) dx - \lim_{R\to \infty} \int_{\partial B(o,R)} u(x) \frac{\partial}{\partial \nu} \mathcal{U}(x) d\sigma,
\end{align}
where $o\in K$ is some fixed point, $\sigma$ is the surface measure and  $\mathcal{U}$ is the unique non-trivial harmonic function on $M$ satisfying asymptotic property:
\begin{align*}
    \mathcal{U}(x) = \begin{cases}
        \log{|x|} + \tilde{c} + O(|x|^{-1}), & \textrm{on the $E_1$ end},\\
        O(|x|^{-2}), & \textrm{on the $E_2$ end}
    \end{cases}
\end{align*}
as $|x|\to \infty$; see \cite[Lemma~2.10]{HNS} and Lemma~\ref{A2} below. Note that on the half sphere $\partial B(o,R) \cap (\mathbb{R}^2 \times \mathbb{S}^2)$, the boundary term of \eqref{boundary} has estimate
\begin{align*}
    \int_{ \partial B(o,R) \cap (\mathbb{R}^2 \times \mathbb{S}^2)} c |x|^{-1} d\sigma \sim R^{-1} R \sim 1,
\end{align*}
as $R\to \infty$. This observation suggests that the method used in \cite{He2} of proving the reverse Riesz inequality on such type of manifolds with ends only works for functions satisfying some decay properties at each end.

Next, we give two remedies for Lemma~\ref{le_AC}. The first one is just by adding the assumptions from the main result of \cite[Theorem~2.1]{AC}, namely the volume doubling condition:
\begin{align}\label{D}
    V(x,2R) \le C V(x,R), \quad \forall x\in M, \quad \forall R>0,
\end{align}
where $V(x,R)$ denotes the volume of the ball centered at $x$ with radius $R$, as well as the $L^p$ scale-invariant Poincaré inequality: for some $1\le p < \infty$ and any ball $B\subset M$,
\begin{align}\label{P_p}
    \int_B |f-f_B|^p d\mu \le C r^p \int_B |\nabla f|^p d\mu,\quad \forall f\in C^\infty(B),
\end{align}
where $r$ denotes the radius of $B$.

\begin{lemma}\label{le_remedy1}
Let $1<p<\infty$. Let $M$ be a complete Riemannian manifold satisfying \eqref{D}, \eqref{P_p}. Suppose the Hodge projector and the reverse Riesz transform are bounded on $L^p$. Then, the Riesz transform is bounded on $L^{p'}$. 
\end{lemma}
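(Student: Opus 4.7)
The plan is to reproduce the Auscher--Coulhon chain of inequalities \eqref{eq_inequalities} while using the doubling condition \eqref{D} together with the Poincar\'e inequality \eqref{P_p} to justify the single delicate step in which the reverse Riesz inequality is applied to $f:=\Delta^{-1}\delta\omega$. As flagged in the discussion preceding Lemma~\ref{le_AC}, this $f$ is not a priori in $C_c^\infty(M)$, so the standing reverse Riesz bound must be extended to a class containing it; this is exactly what \eqref{D} and \eqref{P_p} make routine.

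First, from the assumed $L^p$-boundedness of $\mathcal{P}=d\Delta^{-1}\delta$ one obtains
\[
\|\nabla f\|_p \;=\; \|d\Delta^{-1}\delta\omega\|_p \;=\; \|\mathcal{P}\omega\|_p \;\le\; C\|\omega\|_p,
\]
so $f$ lies in the homogeneous Sobolev-type space $\{g:\nabla g\in L^p\}$. Under \eqref{D} and \eqref{P_p} this space is, modulo constants, the $\|\nabla\cdot\|_p$-closure of $C_c^\infty(M)$: this is a standard consequence of the two-sided Gaussian heat kernel bounds available under \eqref{D}+\eqref{P_p}, combined with a cut-off and mollification argument (Caccioppoli estimate on annuli, followed by convolution with the heat semigroup at a small time). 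Hence there is a sequence $f_n\in C_c^\infty(M)$ with $\nabla f_n\to\nabla f$ in $L^p$.

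Applying the standing reverse Riesz inequality to each $f_n$ yields
\[
\|\Delta^{1/2} f_n-\Delta^{1/2} f_m\|_p \;\le\; C\|\nabla(f_n-f_m)\|_p,
\]
so $\{\Delta^{1/2}f_n\}$ is Cauchy in $L^p$. Identifying its limit as $\Delta^{-1/2}\delta\omega$ uses the $L^2$-identity $\Delta^{1/2}\Delta^{-1}\delta\omega=\Delta^{-1/2}\delta\omega$ (which holds by the spectral theorem since $\delta\omega\in L^2$) together with a pairing against $C_c^\infty$ test functions. Passing to the limit then produces the Auscher--Coulhon chain
\[
\|\Delta^{-1/2}\delta\omega\|_p \;\le\; C\|\nabla f\|_p \;\le\; C\|\omega\|_p,
\]
for all $\omega\in C_c^\infty(\Lambda^1(M))$, and duality delivers the $L^{p'}$-boundedness of the Riesz transform $d\Delta^{-1/2}$.

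The main obstacle is precisely the density step: verifying that $f=\Delta^{-1}\delta\omega$ lies in the $\|\nabla\cdot\|_p$-closure of $C_c^\infty(M)$, since $f$ is not compactly supported and, on manifolds with ends, may only converge to a constant at infinity. The assumptions \eqref{D}+\eqref{P_p} are exactly what is needed here: they supply the heat kernel and Poincar\'e-type estimates required to run the standard truncation-and-mollification argument without incurring boundary terms, which in the non-doubling setting of Lemma~\ref{le_AC} were precisely the source of the failure exhibited on $(\mathbb{R}^2\times\mathbb{S}^2)\#\mathbb{R}^4$.
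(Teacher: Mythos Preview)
Your approach and the paper's share the same core insight: under \eqref{D} and \eqref{P_p} the reverse Riesz inequality extends from $C_c^\infty(M)$ to all of $\dot{W}^{1,p}(M)$, so that it applies to $f=\Delta^{-1}\delta\omega$ and the chain \eqref{eq_inequalities} goes through. The mechanisms differ, however. The paper does not argue via density; instead it observes that the original \cite{AC} proof of the reverse Riesz inequality---specifically the Calder\'on--Zygmund decomposition \cite[Proposition~1.1]{AC}---already applies to any $f$ with $\nabla f\in L^p$, as the footnote on \cite[p.~6]{AC} indicates. This sidesteps both the density question and the limit identification entirely.

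Your density route is plausible but two steps are under-argued. First, the assertion that \eqref{D}$+$\eqref{P_p} gives two-sided Gaussian heat kernel bounds is only correct for $p\le 2$; Gaussian bounds are equivalent to \eqref{D}$+$($P_2$), and for $p>2$ the hypothesis \eqref{P_p} is strictly weaker, so the heat-semigroup mollification you invoke is not available in the stated generality. Density of $C_c^\infty$ in $\dot{W}^{1,p}$ modulo constants may still hold under \eqref{D}$+$\eqref{P_p}, but it needs a different justification. Second, identifying the $L^p$-limit of $\Delta^{1/2}f_n$ with $\Delta^{-1/2}\delta\omega$ requires more than the $L^2$ spectral identity: you only know $\nabla f_n\to\nabla f$ in $L^p$, not $f_n\to f$ in any topology where $\Delta^{1/2}$ is continuous. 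Routing through $\langle\Delta^{1/2}f_n,\Delta^{1/2}\psi\rangle=\langle\nabla f_n,\nabla\psi\rangle$ for $\psi\in C_c^\infty$ and using density of $\Delta^{1/2}C_c^\infty$ in $L^{p'}$ would close this, but it should be spelled out. The paper's route via the CZ decomposition is cleaner precisely because it avoids both issues.
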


\begin{proof}
The key observation is that if one can show that the reverse Riesz ineqiality holds for a broader class of functions in the sense
\begin{align*}
    \|\Delta^{1/2}f\|_p \le C \|\nabla f\|_p
\end{align*}
for all $f\in \dot{W}^{1,p}(M)$, then \eqref{eq_inequalities} holds automatically. In the above setting, $\dot{W}^{1,p}(M)$ refers to the homogeneous Sobolev space with semi-norm $\|f\|_{\dot{W}^{1,p}}:=\|\nabla f\|_p$. Indeed, under the assumptions of \eqref{D} and \eqref{P_p}, as also mentioned by \cite[footnote at page~6]{AC}:" Of course, $f$ can be taken more general than this", the key element of their proof--a Caldéron-Zygmund type decomposition  (\cite[Proposition~1.1]{AC})--works for functions in $\dot{W}^{1,p}$. Then the validity of reverse Riesz inequality follows by \cite[Section~1.2]{AC} and \cite[Section~1.3]{AC} successively.
\end{proof}

\begin{lemma}\label{le_remedy2}
Let $M$ be a manifold with ends in the type of \eqref{M1}. Then the conjunction of boundedness of Hodge projector and reverse Riesz inequality on $L^p$ implies the boundedness of Riesz transform on $L^{p'}$.  
\end{lemma}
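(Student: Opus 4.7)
The plan is to execute the Auscher--Coulhon chain of inequalities from Lemma~\ref{le_AC}, with the essential additional input that the reverse Riesz inequality can be applied to the (non-compactly supported) function $u := \Delta^{-1}\delta\omega$ for $\omega \in C_c^\infty(\Lambda^1(M))$. First, since $\delta\omega \in C_c^\infty(M)$ and every end $E_i$ of $M$ is non-parabolic (as $n_i \ge n_* \ge 3$), the low-energy resolvent estimates \eqref{eq_reso} and \eqref{eq_gradient_reso} evaluated at $k=0$, combined with the parametrix construction from Section~\ref{sec2.2}, give the pointwise decay
\[
|u(x)| \le C\,|x|^{2-n_i}, \qquad |\nabla u(x)| \le C\,|x|^{1-n_i}, \qquad x \in E_i.
\]

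The second step is to verify, by auditing the proof of the reverse Riesz inequality in \cite{He2}, that its integration-by-parts boundary terms vanish for our $u$. The critical boundary flux on $\partial B(o,R) \cap E_i$ takes the schematic form $\int u(x)\,\partial_\nu \mathcal{U}(x)\,d\sigma$, where $\mathcal{U}$ is the harmonic reference function appearing in the proof (analogous to the one of \cite[Lemma~2.10]{HNS}); in our non-parabolic setting $\nabla \mathcal{U}$ decays like $|x|^{1-n_i}$ on $E_i$. The integrand is therefore of order $|x|^{2-n_i}\cdot|x|^{1-n_i}$, and integration against the surface measure of size $R^{n_i-1}$ produces a contribution of order $R^{2-n_i}\to 0$ as $R\to\infty$, precisely because $n_i\ge 3$. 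The companion boundary term $\int \mathcal{U}(x)\,\partial_\nu u(x)\,d\sigma$ is estimated analogously. Consequently, the argument of \cite{He2} extends and yields $\|\Delta^{1/2} u\|_p \le C\,\|\nabla u\|_p$.

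With the reverse Riesz inequality now available for $u$, the conclusion follows the standard chain:
\[
\|\Delta^{-1/2}\delta\omega\|_p
= \|\Delta^{1/2} u\|_p
\le C\,\|\nabla u\|_p
= C\,\|d\Delta^{-1}\delta\omega\|_p
= C\,\|\mathcal{P}\omega\|_p
\le C\,\|\omega\|_p,
\]
where the last inequality is the hypothesis on the Hodge projector. Since $C_c^\infty(\Lambda^1(M))$ is dense in $L^p(\Lambda^1(M))$, duality gives $\|d\Delta^{-1/2}\|_{p'\to p'}\le C$, which is the claimed boundedness of the Riesz transform on $L^{p'}$.

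The main obstacle in this plan is the second step: one must be confident that \emph{every} surface integral appearing in the \cite{He2} proof of the reverse Riesz inequality is controlled by the end-wise decay $|x|^{2-n_i}$ and $|x|^{1-n_i}$. The hypothesis $n_* \ge 3$ guarantees all ends are non-parabolic and hence all such fluxes vanish in the limit; the parabolic examples \eqref{M2} and \eqref{M3} discussed before this lemma show that, without this assumption, the Poisson solution $u$ need not decay at a parabolic end and the scheme breaks down, as exhibited explicitly on $(\mathbb{R}^2\times \mathbb{S}^2)\#\mathbb{R}^4$.
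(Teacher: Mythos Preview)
Your proposal is correct and follows essentially the same route as the paper: obtain the decay $|\Delta^{-1}\delta\omega(x)| \lesssim |x|^{2-n_i}$ on each end (the paper invokes Lemma~\ref{keylemma_HS} directly rather than the resolvent estimates plus parametrix), verify that the boundary flux against the harmonic reference functions $\mathcal{U}_i$ vanishes via the $R^{2-n_i}\to 0$ computation, and then run the Auscher--Coulhon chain \eqref{eq_inequalities}. One small correction: in the setting of \eqref{M1} the relevant $\mathcal{U}_i$ are the \emph{bounded} harmonic functions tending to $1$ on $E_i$ and $0$ elsewhere (as in Theorem~\ref{thm0}), not the logarithmically growing function of \cite[Lemma~2.10]{HNS}, which pertains to the parabolic case.
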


\begin{proof}
By the argument above (below Lemma~\ref{le_AC}), it suffices to check that on $M$, reverse Riesz inequality \eqref{rrt} holds for functions in the form: $\Delta^{-1} \delta \omega$ with $\omega \in C_c^\infty(\Lambda^1(M))$. Indeed, since $\delta \omega \in C_c^\infty(M)$, Lemma~\ref{keylemma_HS} guarantees that
\begin{align*}
    |\Delta^{-1}\delta \omega(x)| \le \begin{cases}
        C, & x\in K,\\
        C |x|^{2-n_j}, & z\in E_j.
    \end{cases}
\end{align*}
Note that the above estimates implies 
\begin{align}\label{7.5a}
    \int_M \nabla \Delta^{-1}\delta \omega(x) \nabla \mathcal{U}_i(x) dx = \int_M \Delta^{-1}\delta \omega(x) \Delta \mathcal{U}_i(x) dx = 0.
\end{align}
Indeed the following integration by parts hold: for each $1\le i\le l$,
\begin{align*}
    \int_M \nabla \Delta^{-1}\delta \omega(x) \nabla \mathcal{U}_i(x) dx = \int_M \Delta^{-1}\delta \omega(x) \Delta \mathcal{U}_i(x) dx - \lim_{R\to \infty} \int_{\partial B(o,R)} \Delta^{-1} \delta \omega(x) \frac{\partial}{\partial \nu} \mathcal{U}_i(x) d\sigma = 0,
\end{align*}
where $\mathcal{U}_i$ is the unique bounded harmonic function which tends to 1 at the infinity of $E_i$ and to 0 at the infinity of all other ends. In addition (again, by Lemma~\ref{keylemma_HS}), $|(\partial/\partial \nu) \mathcal{U}_i(x)|\le C |x|^{1-n_j}$ for $x\in E_j$ $(1\le j\le l)$. It follows that 
\begin{align*}
\int_{\partial B(o,R)} |\Delta^{-1} \delta \omega(x)| \left|\frac{\partial}{\partial \nu} \mathcal{U}_i(x) \right| d\sigma \le C \sum_{j} \int_{\partial B(o,R) \cap E_j} |x|^{3-2 n_j} d\sigma \sim C \sum_j R^{2-n_j} \to 0,
\end{align*}
as $R\to \infty$ since $n_j \ge n_* \ge 3$.

The argument from \cite{He2} shows that the equality \eqref{7.5a} implies that the estimates \eqref{eq_inequalities} are valid, see formulas (3.9) and (3.10) from \cite{He2}.
\end{proof}

\appendix

\section{Parametrix for the connected sum of two planes}\label{appendix}

In this part we describe parametrix construction for $M = \mathbb{R}^2 \# \mathbb{R}^2$. We follow a step-by-step procedure as in \cite{HNS}. Since most of the results can be covered by the same arguments in previous works with trivial modifications, we only list the result below and prove the only potential obstacle which we also refer to the key lemma; see Lemma~\ref{keylemma_proof} below.

\begin{lemma}
Let $M$ be defined by \eqref{M3}. On $\partial K \big|_{E_i}$, there exists a unique harmonic extension operator
\begin{align*}
    \mathcal{E}_i : C^\infty(\partial K \big|_{E_i}) \ni f \mapsto u = \mathcal{E}_i f \in C^\infty(E_i)
\end{align*}
such that $u$ tends to a constant at infinity with asymptotic expansion in nonpositive powers of $r:=|z|$ as $r\to \infty$. Moreover, the radial derivative $\partial_r u$ is $O(r^{-2})$ at infinity and has asymptotic expansion obtained by differentiation the expansion for $u$ term by term.
\end{lemma}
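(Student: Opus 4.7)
The plan is to exploit the two-dimensional geometry: the end $E_i$ is isometric to $\mathbb{R}^2\setminus K_i'$ for some compact $K_i'$, so we may work in polar coordinates $(r,\theta)$, choosing $R_0$ large enough that $\{r\geq R_0\}\subset E_i$ and lies outside $\partial K|_{E_i}$.

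First I would handle the asymptotic expansion purely by separation of variables. On the annulus $\{r>R_0\}$, every harmonic function admits the Laurent--Fourier expansion
\begin{align*}
u(r,\theta) = a_0 + b_0\log r + \sum_{n\geq 1}\bigl[(a_n r^n + b_n r^{-n})\cos n\theta + (c_n r^n + d_n r^{-n})\sin n\theta\bigr].
\end{align*}
Requiring that $u$ tend to a finite constant at infinity immediately forces $b_0=0$ and $a_n=c_n=0$ for every $n\geq 1$, reducing the tail of $u$ to a series in nonpositive integer powers of $r$. The $O(r^{-2})$ decay of $\partial_r u$ and its asymptotic expansion via term-by-term differentiation then follow directly, since the remaining coefficients $(b_n,d_n)$ decay rapidly (being the Fourier coefficients of the trace of $u$ on a large smooth circle, multiplied by a uniformly bounded factor).

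Next I would establish existence and uniqueness of $\mathcal{E}_i f$. Uniqueness is immediate: if $u_1,u_2$ are two extensions with common boundary data and both admitting the required asymptotics, then $w:=u_1-u_2$ is a bounded harmonic function on $E_i$, vanishes on $\partial K|_{E_i}$, and tends to a finite constant at infinity; the maximum principle on $E_i\cap B(0,R)$, together with the assumed asymptotics controlling $w$ on $\partial B(0,R)$ as $R\to\infty$, forces $w\equiv 0$. For existence, I would invoke the classical Perron construction to solve the bounded exterior Dirichlet problem on $E_i$ with boundary data $f$; since $\partial K|_{E_i}$ is smooth (hence every boundary point is regular) and has positive logarithmic capacity, the Perron solution is bounded, harmonic, and takes the prescribed boundary values. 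Equivalently one may apply the Kelvin inversion $z\mapsto z/|z|^2$, which turns the exterior problem into an interior Dirichlet problem on a bounded domain with a removable singularity at the origin, where the classical theory gives the solution; pulling back produces $u=\mathcal{E}_i f$. Smooth boundary data then yield $u\in C^\infty(E_i)$ by interior elliptic regularity together with Schauder/boundary regularity at $\partial K|_{E_i}$.

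The hard part is ruling out the $\log r$ coefficient $b_0$: this obstruction corresponds precisely to the net flux of $u$ through large circles $\{r=R\}$, and a priori one might fear $b_0\neq 0$, in which case $u$ would blow up at infinity. The resolution is that by \emph{imposing} boundedness at infinity as part of the definition of $\mathcal{E}_i$, the coefficient $b_0$ is forced to vanish automatically, with no additional compatibility condition on the boundary data $f$; the constant $a_0$ at infinity is the one unprescribed degree of freedom, determined uniquely by $f$ through the Perron construction. This is a genuinely two-dimensional phenomenon, absent in the higher-dimensional setting of Lemma~\ref{keylemma_HS}, which is why the lemma had to be isolated here.
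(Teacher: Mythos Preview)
Your proposal is correct and essentially self-contained, whereas the paper does not prove this lemma at all: it simply cites \cite[Lemma~2.2]{HNS}. Your argument via the Laurent--Fourier expansion in polar coordinates, combined with Kelvin inversion (or Perron) to solve the bounded exterior Dirichlet problem in the plane, is the standard route and is almost certainly what appears in the cited reference; the observation that boundedness kills the $\log r$ mode without any compatibility condition on $f$ is exactly the point.

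One small remark on presentation: in your coefficient-decay step, it is cleaner to note that evaluating the expansion on the circle $\{r=R_1\}$ for some fixed $R_1>R_0$ identifies $b_n R_1^{-n}$ (and $d_n R_1^{-n}$) with the Fourier coefficients of the smooth trace $u(R_1,\cdot)$, so that $|b_n|\le C_k R_1^{n} n^{-k}$ for every $k$; the series $\sum b_n r^{-n}$ and all its term-by-term derivatives then converge absolutely and uniformly on $\{r\ge R_1\}$ by the geometric factor $(R_1/r)^n$. Your statement ``multiplied by a uniformly bounded factor'' slightly obscures this, since the factor $R_1^{n}$ is not bounded in $n$; it is the pairing with $r^{-n}$ for $r\ge R_1$ that restores control.
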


\begin{proof}
See \cite[Lemma~2.2]{HNS}.
\end{proof}

\begin{lemma}\label{A2}
Let $F\in L^2(M)$ with compact support in the interior of $K$. Then there exists a unique solution $u$ to the equation $\Delta u = F$ on $M$ such that $u$ is bounded on $E_i$. Moreover, $u$ tends to constants $\beta_i$ as $r\to \infty$ on $E_i$ respectively.

In addition, there exists a unique global harmonic function $\mathcal{U}$ on $M$ such that
\begin{align*}
    \mathcal{U}(z) = \log{r} + c_i + O(r^{-1}), \quad \textrm{as} \quad r\to \infty \quad \textrm{on $E_i$ end}.
\end{align*}
\end{lemma}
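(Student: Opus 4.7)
The plan is to follow the cutoff-plus-Fredholm scheme used in \cite[Lemma 2.9--2.10]{HNS}, adapted to the fully parabolic connected sum $M = \mathbb{R}^2 \# \mathbb{R}^2$. Both assertions will be reduced, in the end, to inverting an error operator built from the parametrix of Section~\ref{sec2.2} (which rests on Lemma~\ref{keylemma}).

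For the first part, fix smooth cutoffs $\chi_i$ supported in $E_i$ and equal to one near infinity of $E_i$. Build a first approximation $u_0$ by solving the Dirichlet problem $\Delta w = F$ on a large compact region containing $\operatorname{supp}(F)$, and prescribing $u_0 = \mathcal{E}_i\bigl(w|_{\partial K \cap E_i}\bigr)$ on each end via the harmonic extension operator from the preceding lemma, glued by the cutoffs. By construction $\Delta u_0 - F$ is smooth with compact support in the gluing region. Then solve $\Delta v = -(\Delta u_0 - F)$ for a bounded $v$ and set $u = u_0 + v$. The bounded correction $v$ is produced by inverting $I+E(0)$ on the space of compactly supported sources---this is the Fredholm step. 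Uniqueness modulo a single additive constant follows from parabolicity of $\mathbb{R}^2 \# \mathbb{R}^2$: its quadratic volume growth forces bounded harmonic functions to be constant (cf.\ the bounded-Liouville invoked in \cite[Lemma 2.10]{HNS}), so any two bounded solutions differ by a constant and the limit values $\beta_i$ are pinned up to this constant.

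For the second assertion I would take as starting approximation
\begin{equation*}
\mathcal{U}_0(z) \;=\; \chi_1(z)\log r \;-\; \chi_2(z)\log r,
\end{equation*}
with $r=|z|$ on each end, the opposite sign being dictated by the flux balance at infinity. Then $\Delta \mathcal{U}_0$ is smooth, compactly supported, and a direct computation via Stokes' theorem gives $\int_M \Delta\mathcal{U}_0 = 2\pi - 2\pi = 0$. Apply the first part to produce a bounded $v$ with $\Delta v = -\Delta\mathcal{U}_0$, and set $\mathcal{U}:=\mathcal{U}_0+v$. This $\mathcal{U}$ is globally harmonic, and since $v$ tends to a constant on each end with $O(r^{-1})$ error (inherited from the harmonic extension expansions), $\mathcal{U}$ acquires the advertised asymptotics $\log r + c_i + O(r^{-1})$ on each end (with the sign on the second end absorbed into the convention for $r$). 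Uniqueness is again by parabolic Liouville: the difference of two such functions is bounded and harmonic, hence constant.

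The principal obstacle is the Fredholm step in the first part. Because $M$ is doubly parabolic there is no transient end to soak up total mass, and $I+E(0)$ acquires a non-trivial cokernel corresponding to the constants, so one cannot invert it outright. Solvability therefore demands the single scalar compatibility condition $\int_M(\Delta u_0-F)=0$, equivalently $\int_M F=0$. In the applications of this lemma in the paper, $F$ arises as $\delta\omega$ for a compactly supported $1$-form $\omega$, so $\int_M F=0$ automatically by integration by parts, and the compatibility is built in. Once invertibility on the orthogonal complement is established---by adapting the Fredholm argument of \cite[Lemma~3.3--3.5]{HNS} to the two-parabolic-end setting using Lemma~\ref{keylemma}---the construction of $\mathcal{U}$ follows routinely from the first part.
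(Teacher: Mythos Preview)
Your approach is the same strategy the paper invokes by citation to \cite{HNS}, and you have correctly identified the two places where the doubly-parabolic case $\mathbb{R}^2\#\mathbb{R}^2$ genuinely differs from the one-parabolic-end situation treated there.

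The compatibility condition $\int_M F=0$ you flag for the first part is real and necessary: a bounded harmonic function on $E_i\cong\mathbb{R}^2\setminus D$ must have the form $\beta_i+O(r^{-1})$ (the $\log r$ coefficient is forced to vanish), hence carries zero flux at infinity, and the divergence theorem over a large ball then forces $\int_M F=0$. The lemma as written omits this hypothesis; in the paper's applications (notably $F=v_i=-\Delta\phi_i$ in Lemma~\ref{keylemma_proof}, where $\nabla\phi_i$ is compactly supported) it holds automatically, as you note. Your remark that uniqueness is only modulo an additive constant is likewise correct, since bounded harmonic functions on $M$ are constant.

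For the second part your flux-balance instinct is right but your resolution is not. A global harmonic function with leading term $+\log r$ on \emph{both} ends would carry total outward flux $2\pi+2\pi\neq 0$ through $\partial B(o,R)$, contradicting $\Delta\mathcal{U}=0$; so the asymptotic as literally stated cannot hold with the same sign on the two ends. Your ansatz $\mathcal{U}_0=\chi_1\log r-\chi_2\log r$ is the correct construction and produces $\mathcal{U}\sim +\log r$ on $E_1$ and $\mathcal{U}\sim -\log r$ on $E_2$. The claim that the minus sign is ``absorbed into the convention for $r$'' is simply wrong: $r=|z|\ge C>0$ on both ends and $\log r$ is positive there; no choice of coordinate flips this sign. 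You should state and prove the corrected asymptotic $\mathcal{U}=\pm\log r+c_i+O(r^{-1})$ with opposite signs on the two ends, and note that $\mathcal{U}$ is then unique up to sign and an additive constant.
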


\begin{proof}
The result for the first part is followed by \cite[Proposition~2.5, Lemma~2.6, Lemma~2.8]{HNS}. The second part follows by a simple variation of the proof of \cite[Lemma~2.8]{HNS}.
\end{proof}

\begin{lemma}[{\cite[Lemma~2.12]{HNS}}]\label{lemma_HNS_2.12}
Let $u$ be a function on $\mathbb{R}^2$ such that $u \to 0$ as $|x| \to \infty$ and $v = \Delta u$ has compact support. Then $u$ can be extended to $u(x,k)$ for $k \ge 0$ such that $u(x,0)=u(x)$ and 
\[
(\Delta+k^2)u(x,k) - v(x) = O(k |x|^{-\infty}).
\]
Moreover,
\[
|u(x,k)| = O(|x|^{-1} e^{-ck|x|}), 
\qquad 
|\nabla u(x,k)| = O(|x|^{-2} e^{-ck|x|}),
\]
and
\begin{align}\label{deri4}
|\nabla \big(u(x,k) - u(x,0)\big)| = O(k|x|^{-1}).
\end{align}
\end{lemma}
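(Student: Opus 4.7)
The plan is to define $u(x,k)$ via the explicit resolvent of $\Delta+k^2$ on $\mathbb{R}^2$, namely
\[
u(x,k) := \frac{1}{2\pi}\int_{\mathbb{R}^2} K_0(k|x-y|)\, v(y)\, dy, \qquad k>0,
\]
so that $(\Delta+k^2)u(x,k) = v(x)$ identically; the error in the equation is actually zero, a fortiori $O(k|x|^{-\infty})$. To verify that this extends continuously to $k=0$ with $u(x,0)=u(x)$, I would first observe that the hypothesis $u\to 0$ at infinity forces $\int v = 0$: since $\Delta u = 0$ outside $\operatorname{supp}(v)$, a Laurent-type expansion of the bounded harmonic function $u$ on the exterior of a disc in $\mathbb{R}^2$ yields $|\nabla u|=O(|x|^{-2})$, and integration by parts over $B_R$ then gives $\int v = \int_{\partial B_R}\partial_\nu u \to 0$. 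Inserting the small-$t$ expansion $K_0(t) = -\log(t/2) - \gamma + O(t^2\log t)$, the $k$-dependent constants are annihilated by $\int v=0$, and the remainder reduces to $-\frac{1}{2\pi}\int\log|x-y|\,v(y)\,dy = u(x)$.

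For the pointwise bounds on $|u(x,k)|$ and $|\nabla u(x,k)|$, I would use the same cancellation: for $|x|$ exceeding twice the diameter of $\operatorname{supp}(v)$, write
\[
u(x,k) = \frac{1}{2\pi}\int \bigl[K_0(k|x-y|) - K_0(k|x|)\bigr]\, v(y)\, dy,
\]
the subtracted constant having zero integral. The mean-value theorem combined with $K_0' = -K_1$ and the two-sided asymptotics $K_1(t)\sim t^{-1}$ as $t\to 0^+$ and $K_1(t)\sim \sqrt{\pi/(2t)}\,e^{-t}$ as $t\to\infty$ produces the bound $|u(x,k)|\le C|x|^{-1}e^{-ck|x|}$, with the constant $c$ reduced as needed to absorb polynomial prefactors across the transition $k|x|\sim 1$. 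An analogous argument, differentiating once more in $x$, gives the gradient estimate $|\nabla u(x,k)|\le C|x|^{-2}e^{-ck|x|}$.

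The genuinely delicate point is the new estimate \eqref{deri4}. Using $\int v = 0$ once more, I would write
\[
u(x,k) - u(x,0) = \frac{1}{2\pi}\int \eta(k|x-y|)\, v(y)\, dy, \qquad \eta(t) := K_0(t)+\log(t/2)+\gamma,
\]
with $\eta(t)\sim -\tfrac{t^2}{4}\log(t/2)$ and $\eta'(t)\sim -\tfrac{t}{2}\log(t/2)$ as $t\to 0^+$. Differentiating in $x$ pulls out a factor of $k$, and a further application of $\int v = 0$ allows subtraction of the constant $\eta'(k|x|)\hat{x}$ inside the integral. The mean value theorem in $y$ then bounds the integrand by $k|y|\bigl(k|\eta''(k|x|)| + |x|^{-1}|\eta'(k|x|)|\bigr) \lesssim k|y|\cdot k|\log(k|x|)|$, whence $|\nabla_x(u(x,k)-u(x,0))| \le C k^2|\log(k|x|)|$. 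In the regime $k|x|\le 1$ this is majorised by $Ck|x|^{-1}$, because $t|\log t|$ is bounded on $(0,1]$; in the complementary regime $k|x|\ge 1$ the triangle inequality against the already-established $|\nabla u(x,k)|+|\nabla u(x,0)|\le C|x|^{-2}$ suffices, since $|x|^{-2}\le k|x|^{-1}$ there.

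The main obstacle will be the bookkeeping in the small-$k|x|$ regime: the naive estimate produces an unwanted $|\log(k|x|)|$ that only becomes harmless once the sharp leading asymptotic $\eta'(t)\sim t\log t$ is combined with the elementary bound $t|\log t|\le 1/e$ on $(0,1]$. Once these logarithmic factors are controlled and the two regimes stitched together, \eqref{deri4} and hence the lemma follow.
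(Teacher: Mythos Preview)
The paper does not supply a proof of this lemma; it is quoted verbatim from \cite[Lemma~2.12]{HNS}, so there is nothing in the present paper to compare against.

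That said, your argument is correct and self-contained. The crucial observation that $\int v=0$, which you extract from the exterior expansion of the harmonic function $u$ (decaying to zero in $\mathbb{R}^2$ forces the $\log r$ coefficient, hence the total mass of $v$, to vanish), is exactly what makes every step work: it kills the $-\log k$ divergence in $K_0(k|x-y|)$, allows the subtracted-constant trick for the $|x|^{-1}e^{-ck|x|}$ and $|x|^{-2}e^{-ck|x|}$ bounds, and permits the second subtraction of $\eta'(k|x|)\hat x$ in the delicate estimate \eqref{deri4}. Your handling of the two regimes $k|x|\le 1$ and $k|x|\ge 1$ for \eqref{deri4} is clean; the bound $k^2|\log(k|x|)|\le (k/|x|)\cdot k|x||\log(k|x|)|\le Ck/|x|$ in the first regime, and the fallback to $|\nabla u(x,k)|+|\nabla u(x,0)|\le C|x|^{-2}\le Ck|x|^{-1}$ in the second, close the argument.

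One remark on the comparison with the cited source: by taking $u(x,k)$ to be the \emph{exact} resolvent $\frac{1}{2\pi}K_0(k|\cdot|)\ast v$, you obtain $(\Delta+k^2)u(\cdot,k)=v$ identically, so the error term is zero rather than merely $O(k|x|^{-\infty})$. The construction in \cite{HNS} is framed in a way that also covers product ends $\mathbb{R}^2\times M_i$ (where no such explicit global kernel is available), which is presumably why the statement allows an $O(k|x|^{-\infty})$ error; on the pure plane $\mathbb{R}^2$ your sharper route is the natural one.
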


We now turn to the only genuine obstacle in the parametrix construction.

\begin{lemma}\label{keylemma_proof}
Let $v\in C_c^\infty(K)$, and let $\phi$ be the solution to the equation $\Delta \phi = -v$ guaranteed by \cite[Lemma~2.9]{HNS}. Then for any integer $q$, there exists an approximate solution $u(z,k)$ to the equation $(\Delta+k^2)u = v$, in the sense that
\begin{align}\label{eq_error_est}
    (\Delta+k^2)u - v = \chi_K  O(\textrm{ilg}(k)^q |z|^{-\infty}) + O(k |z|^{-\infty}),
\end{align}
and such that for all $0\le k \le k_0$,
\begin{align}\label{eq_u}
    |u(z,k)| \le C \begin{cases}
        1, &z\in K,\\
        e^{-ck|z|}, &z\in E_i,
    \end{cases}
\end{align}
and
\begin{align}\label{eq_du}
    |\nabla u(z,k)|\le C \begin{cases}
        1, &z\in K,\\
        \left(|z|^{-2} + |z|^{-1} \textrm{ilg}(k)\right) e^{-ck|z|}, &z\in E_i.
    \end{cases}
\end{align}
In addition, 
\begin{align}\label{deri3}
|\nabla \left(\phi(z)+u(z,k)\right)| = O(\textrm{ilg}(k)|z|^{-1}).    
\end{align}

\end{lemma}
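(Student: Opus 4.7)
The plan is to adapt the construction of \cite[Lemma~2.14]{HNS} to the symmetric setting where both ends of $M = \mathbb{R}^2 \# \mathbb{R}^2$ are parabolic copies of $\mathbb{R}^2$. Starting from the bounded solution $\phi$ of $\Delta \phi = -v$ supplied by Lemma~\ref{A2}, which tends to constants $\beta_i$ at the infinity of each end $E_i$ ($i = 1,2$), I would write $u(z,k) = -\phi(z) + w(z,k)$, where $w$ replaces the non-decaying tails of $-\phi$ by profiles that decay exponentially in $k|z|$ and satisfies $w(z,0) \equiv 0$. The residual equation becomes $(\Delta+k^2) w = k^2 \phi + \textrm{error}$, with $w$ tending to $\beta_i$ at the infinity of $E_i$ so that $u \to 0$ there for $k > 0$. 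The harmonic extension operator from \cite[Lemma~2.2]{HNS} combined with Lemma~\ref{A2} yields the asymptotic expansion
\begin{align*}
\phi(z) = \beta_i + \sum_{j=1}^{q} a_{i,j}(\omega)\,|z|^{-j} + O(|z|^{-q-1})
\end{align*}
on each end to any prescribed order $q$.

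The central model object on $\mathbb{R}^2$ is $\textrm{ilg}(k)\,K_0(k|z|)$, where $K_0$ is the modified Bessel function of the second kind of order zero. The asymptotics $K_0(x) = -\log(x/2)-\gamma + O(x^2 \log x)$ as $x \to 0$ and $K_0(x) \sim \sqrt{\pi/(2x)}\,e^{-x}$ as $x \to \infty$, together with $(\Delta+k^2) K_0(k|z|) = 0$ away from the origin, imply that $\textrm{ilg}(k) K_0(k|z|) \to 1$ as $k \to 0$ for fixed $z$ while decaying exponentially for $k > 0$. Higher-order tails are handled by analogous correctors $T_{i,j}(z,k)$ built from modified Bessel functions $K_j$ paired with the angular spherical harmonic of degree $j$, normalised so that $T_{i,j}(z,0) = a_{i,j}(\omega)|z|^{-j}$, $|T_{i,j}(z,k)| \lesssim |z|^{-j} e^{-ck|z|}$ for $k > 0$, and $(\Delta+k^2) T_{i,j} = 0$ away from the origin. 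Using cutoffs $\chi_i \in C^\infty(M)$ supported in $E_i$ with $\chi_i \equiv 1$ for $|z| \ge 2$, I would set
\begin{align*}
u(z,k) := -\phi(z) + \sum_{i=1}^{2} \chi_i(z) \left[ \beta_i \bigl(1 - \textrm{ilg}(k)\,K_0(k|z|)\bigr) + \sum_{j=1}^{q} \bigl(a_{i,j}(\omega)|z|^{-j} - T_{i,j}(z,k)\bigr) \right].
\end{align*}
By design $u(z,0) = -\phi(z)$ identically, and the exponential decay of each corrector yields \eqref{eq_u}.

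To verify the error estimate \eqref{eq_error_est}, note that in the region where $\chi_i \equiv 1$ every corrector is annihilated by $\Delta+k^2$, so the only surviving contribution is the truncation residual $O(k^2 |z|^{-q-1})$ of the $q$-th order expansion of $\phi$, which is absorbed into $O(k|z|^{-\infty})$ by choosing $q$ large. The commutators $[\Delta,\chi_i]$ applied to each corrector are supported in the annulus $1 \le |z| \le 2 \subset K$ and, after iterating the construction so that each round of correction gains a factor of $\textrm{ilg}(k)$, produce the $\chi_K\,O(\textrm{ilg}(k)^q |z|^{-\infty})$ part. Estimate \eqref{eq_du} follows from $|\nabla\phi| = O(|z|^{-2})$ on each end combined with the Bessel bounds $\textrm{ilg}(k)|K_0(k|z|)| \lesssim e^{-ck|z|}$ and $\textrm{ilg}(k)|\nabla K_0(k|z|)| \lesssim \textrm{ilg}(k)|z|^{-1} e^{-ck|z|}$ (the latter via $\nabla K_0 = -k K_1 \hat{r}$ and $K_1(x) \sim 1/x$ as $x \to 0$). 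Finally, \eqref{deri3} follows because $\nabla(\phi + u)$ involves only derivatives of the $k$-dependent correctors, and the local expansion $\textrm{ilg}(k)\,K_0(k|z|) = 1 + \textrm{ilg}(k)\cdot O_{|z|}(1) + O(\textrm{ilg}(k)^2)$ shows that each such derivative is of order $\textrm{ilg}(k)|z|^{-1}$.

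The main obstacle is the iterative matching step: organising the cancellation so that the $\textrm{ilg}(k)$-type errors remain confined to the compact set $K$ while all residual errors on the ends are of size $O(k|z|^{-\infty})$. Each pass of matching Bessel asymptotics to the polynomial tail, or applying a compactly supported model resolvent to the previous error, gains one factor of $\textrm{ilg}(k)$; hence $q$ iterations are required to obtain the stated $\textrm{ilg}(k)^q$ error bound. A secondary delicate point is verifying the sharp $O(\textrm{ilg}(k)|z|^{-1})$ bound in \eqref{deri3} uniformly on $M$ — in particular on $K$ itself, where the $|z|^{-1}$ factor is just $O(1)$ and smallness comes entirely from the $\textrm{ilg}(k) \to 0$ factor — and for this I would carefully track the dependence of every corrector and its gradient on $k$ throughout the iteration.
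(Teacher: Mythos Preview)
Your approach is essentially the paper's: both use $-\textrm{ilg}(k)K_0(k|z|)$ as the model corrector for the constant tails $\beta_i$ on each parabolic end, patch with cutoffs, and iterate to gain powers of $\textrm{ilg}(k)$. The paper packages the decaying part $-\phi+\beta_i$ into a single function $\Phi_i(z,k)$ via Lemma~\ref{lemma_HNS_2.12} (rather than expanding term by term in $T_{i,j}$) and uses a weighted interpolant $\theta\Phi_1+(1-\theta)\Phi_2$ with $\theta=\beta_2/(\beta_2-\beta_1)$ on $K$; your choice $u=-\phi$ on $K$ achieves the same effect more directly, so this is a presentational difference only.

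There is one genuine gap. Your finite-order expansion produces, on each end, an error $O(k^2|z|^{-q_{\mathrm{exp}}-1})$ where $q_{\mathrm{exp}}$ is the order to which you expand $\phi$. This is \emph{not} $O(k|z|^{-\infty})$ in the paper's sense (decay faster than every polynomial for a single fixed $u$); choosing $q_{\mathrm{exp}}$ large gives only a fixed polynomial rate. You have also conflated this expansion order with the $q$ in the lemma statement, which counts powers of $\textrm{ilg}(k)$ and is unrelated. The fix is to invoke Lemma~\ref{lemma_HNS_2.12} for the decaying tail $-\phi+\beta_i$ on each end (this is precisely what that lemma is for), which delivers a single extension with error $O(k|z|^{-\infty})$; the iteration then proceeds only over the compactly supported $\textrm{ilg}(k)$ residuals, exactly as you describe. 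A secondary imprecision: your cutoffs are supported in $E_i$, so the commutator errors live in $E_i$, not in $K$ as you claim; either enlarge $K$ or let the cutoffs transition inside $K$ as the paper does.
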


\begin{proof}
We follow the idea from \cite[Lemma~2.14]{HNS}. Throughout the proof we use notation $r=|z|$ to emphasize that we are using polar coordinate. We set $u(z,0) = -\phi(z)$ and then extend it to $k>0$. By Lemma~\ref{A2}, the function $\phi$ tends to constants $\beta_1$ and $\beta_2$  at the ends $E_1$ and $E_2$ respectively. Thanks to Lemma~\ref{lemma_HNS_2.12}, we may extend function $-\phi+\beta_i$ to $\Phi_i(z,k)$ on $E_i$ for $0<k\le k_0$ such that $\Phi_i(z,0) = -\phi(z)+\beta_i$ and
\begin{align*}
    (\Delta+k^2)\Phi_i(z,k) - v = O(k r^{-\infty}),
\end{align*}
for all $z\in E_i$. Next, we extend $\Phi_i(z,k)$ to $E_i \cup K$ trivially by letting 
\begin{align*}
    \Phi_i(z,k) = - \phi(z) + \beta_i, \quad \forall z\in K.
\end{align*}
Therefore, by construction, $\Phi_1 - \Phi_2 = \beta_1 - \beta_2$ for all $z\in K$. Now, the crux is to construct an approximate solution. Previously, we glued a suitable Bessell function to the parabolic end; see \cite[Lemma~2.14]{HNS}. Since now we have two parabolic ends, we need to patch two Bessell functions to each of them in a weighted way.

Define $\chi_i$ to be a smooth cut-off function such that $\chi_i = 1$ on $E_i$ and $\textrm{supp}(\chi_i) \subset E_i \cup K$ (notice that we choose $K$ large enough so that there exists some space for $\chi_i$ to decay to zero, i.e. for all $z\in \textrm{supp}(\chi_i)$, $\chi_i$ are radial functions depending on $r$ only). Next, we define
\begin{align}\label{eq_u1}
    u_1(z,k) &= -\textrm{ilg}(k) \sum_i \beta_i \chi_i(r) K_0(kr) + \sum_i \chi_i(r) \Phi_i(z,k) \\ \nonumber
    &+ \left(1-\chi_1(r)-\chi_2(r) \right) \left(\theta \Phi_1(z,k) + (1-\theta) \Phi_2(z,k)\right),
\end{align}
where $\theta = \frac{\beta_2}{\beta_2 - \beta_1}$, i.e., $1-\theta = \frac{\beta_1}{\beta_1-\beta_2}$. Note that $u_1(z,k) \to u(z,0) = - \phi(z)$ as $k\to 0$ for each $z$.

By a straightforward computation, one yields
\begin{align*}
    &(\Delta+k^2)u_1(z,k) = -\textrm{ilg}(k) \sum_i \left[\beta_i \Delta \chi_i K_0(kr) - 2 \beta_i \nabla \chi_i \cdot \nabla (K_0(kr))    \right]\\
    &+ \sum_i \Delta \chi_i \Phi_i(z,k) + \sum_i \chi_i (\Delta+k^2) \Phi_i(z,k) - 2 \sum_i \nabla \chi_i \cdot \nabla \Phi_i(z,k)\\
    &+ (1-\chi_1-\chi_2)\left[ \theta (\Delta+k^2)\Phi_1(z,k) + (1-\theta) (\Delta+k^2)\Phi_2(z,k) \right]\\
    &+ \sum_i \Delta \chi_i \left[ -\theta \Phi_1(z,k) - (1-\theta) \Phi_2(z,k) \right] + \sum_i \nabla \chi_i \cdot \left[ 2\theta \nabla \Phi_1(z,k) + 2(1-\theta) \nabla \Phi_2(z,k) \right].
\end{align*}
We first consider the $\Delta \chi_i$ part:
\begin{align*}
    \left[-\textrm{ilg}(k) \beta_1 K_0(kr) + (1-\theta)(\Phi_1-\Phi_2) \right] \Delta \chi_1 + \left[-\textrm{ilg}(k) \beta_2 K_0(kr) + \theta (\Phi_2-\Phi_1) \right] \Delta \chi_2.
\end{align*}
Note that $\Delta \chi_i$ support in $K$, and hence the above equals
\begin{align*}
    \big[\textrm{ilg}(k) \beta_1 &\left( \log(k) + \log{r} + O(1) \right) + (1-\theta)(\beta_1-\beta_2) \big] \Delta \chi_1 \\
    &+ \left[\textrm{ilg}(k) \beta_2 \left( \log(k) + \log{r} + O(1) \right) + \theta (\beta_2-\beta_1) \right] \Delta \chi_2\\
    &= \left[-\beta_1 + O(\textrm{ilg}(k)) \log{r} + \beta_1 \right] \Delta \chi_1 + \left[-\beta_2 + O(\textrm{ilg}(k)) \log{r} + \beta_2 \right] \Delta \chi_2\\
    &= O(\textrm{ilg}(k)) \log{r} \sum_i \Delta \chi_i\\
    &= O(\textrm{ilg}(k) r^{-\infty}) \chi_K,
\end{align*}
where we use the fact $K_0(s) \sim - \log{s} + O(1)$ for $s\le 1$ and $\Phi_1 - \Phi_2 = \beta_1-\beta_2$ for $z\in K$.

Next, we treat the $\nabla \chi_i$ terms:
\begin{align*}
    2 \big[ \beta_1 \textrm{ilg}(k) \nabla(K_0(kr)) + & (1-\theta)\left( \nabla \Phi_2(z,k) - \nabla \Phi_1(z,k) \right) \big] \cdot \nabla \chi_1\\
    &+ 2 \left[ \beta_2 \textrm{ilg}(k) \nabla(K_0(kr)) + \theta \left(\nabla \Phi_1(z,k) - \nabla \Phi_2(z,k) \right) \right] \cdot \nabla \chi_2\\
    &= 2 \textrm{ilg}(k) \nabla (K_0(kr)) \cdot \sum_i \beta_i \nabla \chi_i\\
    &= O(\textrm{ilg}(k) r^{-\infty}) \chi_K,
\end{align*}
since $|\nabla (K_0(kr))|\le C r^{-1} = O(1)$ for $z\in \textrm{supp}(\nabla \chi_i)$. Next, we compare $(\Delta+k^2)u_1$ with $v$. One finds that
\begin{align*}
    (\Delta + k^2)u_1 - v &= \textrm{ilg}(k) v_1 + \sum_i \chi_i \left[ (\Delta+k^2)\Phi_i - v \right]\\
    &+ (1-\chi_1-\chi_2) \left[ \theta \left( (\Delta+k^2)\Phi_1 - v \right) + (1-\theta) \left( (\Delta+k^2)\Phi_2 - v \right) \right]\\
    &= \textrm{ilg}(k) v_1 + O(k r^{-\infty})\chi_{E_1 \cup E_2} + O(k^2 r^{-\infty})\chi_K\\
    &= \textrm{ilg}(k) v_1 + O(k r^{-\infty}),
\end{align*}
where $v_1 \in C_c^\infty(K)$. To complete the proof of \eqref{eq_error_est}, one repeats the above computation, replacing $v$ by $v_1$. Denote by $u_2$ the approximate solution to the equation $(\Delta+k^2)u = v_1$. Then, $u_1 + \textrm{ilg}(k)u_2$ satisfies \eqref{eq_error_est} with $q=2$. Repeating this argument $q$ times, one obtains an approximate solution satisfies the assertion in \eqref{eq_error_est}.

To this end, we check the estimates \eqref{eq_u}, \eqref{eq_du} and \eqref{deri3}. It suffices to check $u_1$ since the other summands for $u$ follow by a similar estimate with an additional factor of some positive power of $\textrm{ilg}(k)$. By \eqref{eq_u1} and Lemma~\ref{lemma_HNS_2.12}, the terms involving $\Phi_i$ satisfy the RHS of \eqref{eq_u}. As for the term $\textrm{ilg}(k) K_0(kr)$, one recalls the asymptotic formula for $K_0$ and concludes that this term is $O(1)$ for $kr$ small and is $O(e^{-ckr})$ for $kr$ large.

Next, for the gradient estimate \eqref{eq_du}, one has by construction that
\begin{align*}
    \nabla u_1(z,k) &= \sum_i \nabla \chi_i \left[ -\textrm{ilg}(k) K_0(kr) \beta_i + \beta_i \right]\\
    &- \textrm{ilg}(k) \nabla(K_0(kr)) \sum_i \beta_i \chi_i\\
    &+ \sum_i \chi_i \nabla \Phi_i + (1-\chi_1-\chi_2) \left[\theta \nabla\Phi_1 + (1-\theta) \nabla \Phi_2\right].
\end{align*}
The estimates for the last line above follow by Lemma~\ref{lemma_HNS_2.12}, which is clear. By asymptotic formula of $K_0$, the first line is $O(1)\chi_K$. As for the second line, it is plain that
\begin{align*}
    \textrm{ilg}(k)|\nabla (K_0(kr))|\le C \begin{cases}
        \textrm{ilg}(k) r^{-1}, & kr\le 1,\\
        r^{-1} e^{-ckr}, & kr\ge 1.
    \end{cases}
\end{align*}
Finally, for \eqref{deri3}, a straightforward calculation yields
\begin{align*}
    \nabla \left(\phi(z) + u_1(z,k)\right) &= \sum_i \nabla \chi_i \left[ -\textrm{ilg}(k) K_0(kr) \beta_i + \beta_i \right]\\
    &- \textrm{ilg}(k) \nabla(K_0(kr)) \sum_i \beta_i \chi_i\\
    &+ \sum_i \chi_i \nabla \left(\Phi_i + \phi \right) + (1-\chi_1-\chi_2) \left[\theta \nabla \left(\Phi_1+\phi \right) + (1-\theta) \nabla \left(\Phi_2+\phi\right)\right].
\end{align*}
By Lemma~\ref{lemma_HNS_2.12} and estimates above, we conclude
\begin{align*}
    |\nabla \left(\phi(z) + u_1(z,k)\right)| = O(\textrm{ilg}(k)r^{-1}) + O(kr^{-1}),
\end{align*}
completing the proof.

\end{proof}

We are now in a position to construct the parametrix for the resolvent at low energy. Namely, we decompose
\begin{align*}
    (\Delta+k^2)^{-1} = \sum_{j=1}^4 G_j(k),\quad \forall 0<k\le k_0
\end{align*}
for some $k_0\le 1/2$ small enough. $G_j(k)$ given as in \eqref{G1}, \eqref{G2}, \eqref{G3}, and \eqref{parametrix_R2}.  
The only difference here is that for each $i=1,2$, $\Delta_i$ is the standard Laplacian on $\mathbb{R}^2$. As in the case of $M$ defined by \eqref{M2}, one must invert $I+\tilde{E}_1(k)$ for small $k$ to obtain the true resolvent.

Define weight
\begin{align*}
    \omega(z) = \begin{cases}
        1, &z\in K,\\
        (r \log{r})^{-1}, &z\in E_i,
    \end{cases}
\end{align*}
and weighted space
\begin{align*}
    L^2_\omega(M) = \{f\in L^2_{\textrm{loc}}(M); \omega^{-1}f \in L^2(M)\}.
\end{align*}
The first step is to show

\begin{lemma}\label{A5}
    The error term $\Tilde{E}_1(k)$ is Hilbert-Schmidt on $L^2_{\omega}(M)$, uniform in $k\le k_0$. Moreover, it is continuous in $k$ and has a limit $\Tilde{E}_1(0)$ as $k\to 0$.
\end{lemma}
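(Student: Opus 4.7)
The plan is to estimate the Hilbert--Schmidt norm of $\tilde{E}_1(k)$ on $L^2_\omega(M)$ directly at the level of integral kernels. Since the map $U:L^2(M)\to L^2_\omega(M)$, $f\mapsto \omega f$, is an isometry, $\tilde{E}_1(k)$ is Hilbert--Schmidt on $L^2_\omega$ if and only if $\omega^{-1}\tilde{E}_1(k)\omega$ is Hilbert--Schmidt on $L^2$, so the task reduces to showing
\[
\|\tilde{E}_1(k)\|_{HS,L^2_\omega}^2 \;=\; \iint |\tilde{E}_1(k)(z,z')|^2\,\omega(z)^{-2}\,\omega(z')^{2}\,dz\,dz' \;\le\; C,
\]
uniformly in $0<k\le k_0$. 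Following \eqref{eq_error4}, I would split $\tilde{E}_1(k)=E(k)+\tilde{E}_2(k)$ and treat the two pieces separately.

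For $E(k)$, each of the summands in \eqref{eq_error1}--\eqref{eq_error3} is compactly supported in the variable $z$ inside $K$, thanks to the factors $\nabla\phi_i(z)$, $v_i(z)=-\Delta\phi_i(z)$, or the support of $(\Delta+k^2)G_{\mathrm{int}}(z,z')-\delta_{z'}(z)$. On that compact set $\omega(z)^{-2}$ is bounded, so the outer integration contributes only a harmless constant. The $z'$-integration uses the two-dimensional resolvent bounds \eqref{eq_reso}--\eqref{dd}; after pairing the Bessel-type logarithmic factor $(1+|\log(k|z'|)|)$ with the weight $\omega(z')^{2}=(|z'|\log|z'|)^{-2}$ on the ends, the resulting radial integrals are uniformly bounded in $k$, the crucial point being that the extra $(\log r')^{-2}$ in the weight absorbs the log-squared blow-up. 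The key piece is $\tilde{E}_2(k)$, a finite sum of rank-one kernels
\[
\tilde{E}_2(k)(z,z') \;=\; \sum_i a_i(z,k)\,b_i(z',k), \qquad a_i(z,k) = (\Delta+k^2)u_i(z,k)-v_i(z),\quad b_i(z',k)=\phi_i(z')\,(\Delta_i+k^2)^{-1}(0,z'),
\]
for which the Hilbert--Schmidt norm factorises as $\sum_i\bigl(\int|a_i|^2\omega^{-2}dz\bigr)\bigl(\int|b_i|^2\omega^{2}dz'\bigr)$. By Lemma~\ref{keylemma} the first factor is $O(\operatorname{ilg}(k)^{2q})+O(k^{2})$ for any integer $q$ chosen in the construction of $u_i$, while by the standard $K_0$ estimate the second factor is $O(|\log k|^{2})$. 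Taking $q\ge 2$ makes the product bounded uniformly in $k$ (in fact $o(1)$ as $k\to 0$).

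Continuity in $k$ then follows by dominated convergence applied in the weighted measure $\omega(z)^{-2}\omega(z')^{2}\,dz\,dz'$: pointwise off the diagonal, $\tilde{E}_1(k)(z,z')\to \tilde{E}_1(0)(z,z')$ as $k\to 0$, where $a_i(z,0)=0$ by Lemma~\ref{keylemma} and hence $\tilde{E}_2(0)=0$, while $E(0)$ is the obvious pointwise limit; the uniform estimates above supply an integrable majorant. The main obstacle is the tension between the logarithmic blow-up $|\log k|^{2}$ produced by the two-dimensional resolvent $(\Delta_{\mathbb{R}^2}+k^{2})^{-1}(0,z')$ and the smallness $\operatorname{ilg}(k)^{2q}$ of the approximate-solution error in $a_i$; the freedom to choose $q$ arbitrarily large in Lemma~\ref{keylemma}, coupled with the extra $(\log r')^{-2}$ factor built into the weight $\omega$, is precisely what makes the estimate uniform.
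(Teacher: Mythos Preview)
Your overall strategy is precisely the argument from \cite[Lemma~3.1]{HNS} that the paper defers to in its one-line proof, and your treatment of $\tilde{E}_2(k)$ via rank-one factorisation together with Lemma~\ref{keylemma} is correct.

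There is, however, a genuine gap in your handling of $E(k)$. You assert that pairing the logarithmic factor $(1+|\log(k|z'|)|)$ against the weight $\omega(z')^{2}=(|z'|\log|z'|)^{-2}$ gives a radial integral uniformly bounded in $k$, but this is false: for instance
\[
\int_{2}^{1/k}\frac{(1+|\log(kr)|)^{2}}{r(\log r)^{2}}\,dr
\;\ge\; c\,|\log k|^{2}\int_{2}^{1/k}\frac{dr}{r(\log r)^{2}}
\;\sim\; c\,|\log k|^{2},
\]
which blows up as $k\to 0$. The weight absorbs the spatial $(\log r')^{2}$ growth but not the $|\log k|^{2}$ part. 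What actually makes $E(k)$ uniformly Hilbert--Schmidt is that its constituent kernels never carry the full $\log k$ factor in the first place: the term \eqref{eq_error1} involves $\nabla_z(\Delta_i+k^2)^{-1}(z,z')$, which by \eqref{eq_gradient_reso} is $O(|z'|^{-1}e^{-ck|z'|})$ with no logarithm; the term \eqref{eq_error2} contains the \emph{difference} $-(\Delta_i+k^2)^{-1}(z,z')+(\Delta_i+k^2)^{-1}(0,z')$, and for $z$ in the compact support of $v_i$ one has $K_0(k|z-z'|)-K_0(k|z'|)=\log(|z'|/|z-z'|)+O(1)=O(1)$ uniformly in $0<k\le k_0$, the $\log k$ contributions cancelling exactly; and \eqref{eq_error3} is compactly supported in both variables. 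With these sharper bounds the weighted integrals are indeed uniformly finite and your dominated-convergence argument for continuity goes through, but it is the cancellation built into $E(k)$, not the weight alone, that secures uniformity.
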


\begin{proof}
    See \cite[Lemma~3.1]{HNS}.
\end{proof}

To go further, we need

\begin{lemma}\label{null}
$(1)$ The range of $\Delta \big|_{C_c^\infty(M)}$ is dense in $L^2_{\omega}(M)$.

$(2)$ There exists functions $\{\varphi_i\}_{i=1}^{\mathcal{N}}$ and $\{\psi_i\}_{i=1}^{\mathcal{N}}$ such that $\varphi_i, \psi_i\in C_c^\infty(M)$, and $\{\varphi_i\}$ is a basis of the null space of $I+\tilde{E}_1(0)$ and $\{\Delta \psi_i\}$ is a basis of a subspace supplementary to the range of $I+\tilde{E}_1(0)$.
\end{lemma}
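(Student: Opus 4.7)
The plan is to treat both parts through Fredholm theory for $I+\tilde{E}_1(0)$ on $L^2_\omega(M)$, available thanks to the Hilbert--Schmidt bound of Lemma~\ref{A5}, combined with the harmonic rigidity on $M=\mathbb{R}^2\#\mathbb{R}^2$ encoded in Lemma~\ref{A2}. The central analytic input is that $M$ supports no nonconstant bounded harmonic function.

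For part (1), I would argue by Hahn--Banach. Identifying the dual of $L^2_\omega(M)$ via the unweighted $L^2$ pairing with $\{g : \omega g\in L^2(M)\}$, take a continuous linear functional vanishing on $\Delta C_c^\infty(M)$; then $\Delta g=0$ in the distributional sense, so elliptic regularity upgrades $g$ to a smooth harmonic function on $M$. The constraint $\omega g\in L^2$ limits the admissible growth of $g$ on each parabolic end to a borderline logarithmic rate, and a mean-value/energy argument then sharpens this to boundedness. Invoking Lemma~\ref{A2}, together with the fact that on $\mathbb{R}^2\#\mathbb{R}^2$ every bounded harmonic function is constant, one concludes that $g$ is a constant; this one-dimensional exceptional direction is then absorbed into the finite-rank correction $\mathcal{O}$, exactly as for case \eqref{M2} in \cite[Lemma~3.3]{HNS}.

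For part (2), Lemma~\ref{A5} shows that $I+\tilde{E}_1(0)$ is Fredholm of index zero on $L^2_\omega(M)$; let $\mathcal{N}$ be the common dimension of its kernel and cokernel. To place a basis $\{\varphi_i\}$ of the kernel in $C_c^\infty(M)$, I would observe that the integral kernel of $\tilde{E}_1(0)$ is compactly supported in its first variable: this is visible from \eqref{eq_error1}--\eqref{eq_error3} for $E(0)$ (the factors $\nabla\phi_i$ and $v_i$ are supported in $K$) together with the parallel structure of $\tilde{E}_2(0)$. Hence any $\varphi=-\tilde{E}_1(0)\varphi$ is automatically compactly supported, and smoothness follows by bootstrapping the equation against the regular part of the kernel and elliptic regularity for $\Delta$. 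To produce the $\psi_i$, I choose any algebraic complement $V$ of $\operatorname{Range}(I+\tilde{E}_1(0))$ with basis $\{w_i\}_{i=1}^{\mathcal{N}}$; part~(1) furnishes approximations $\Delta\psi_i^{(n)}\to w_i$ in $L^2_\omega$ with $\psi_i^{(n)}\in C_c^\infty(M)$, and openness of the set of complements inside the Grassmannian of $\mathcal{N}$-planes ensures that for $n$ sufficiently large the span of $\{\Delta\psi_i^{(n)}\}$ is still a supplement to the range.

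The main obstacle is the Liouville step in part~(1). Proving that every smooth harmonic $g$ with $\omega g\in L^2(M)$ is constant lies at the logarithmic borderline of classical Liouville-type results, and requires combining end-by-end Caccioppoli-type estimates on $\mathbb{R}^2\#\mathbb{R}^2$ with the known nonexistence of nonconstant bounded harmonic functions there. The subsequent bookkeeping---routing the residual constant direction through the finite-rank piece $\mathcal{O}$ so that it is $I+\tilde{E}(0)$, and not merely $I+\tilde{E}_1(0)$, that ultimately becomes invertible---parallels the argument of \cite[Lemma~3.3, 3.4]{HNS} and should introduce no new analytic difficulty beyond the weighted Liouville step.
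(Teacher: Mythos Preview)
The paper's own proof is a bare citation: ``See \cite[Lemma~3.3, 3.4]{HNS}.'' Your sketch follows exactly the template one expects from that reference---Hahn--Banach plus a weighted Liouville step for (1), Fredholm theory plus compact support of the error kernel in the $z$-variable for (2)---so in spirit you are aligned with the paper.

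There is, however, a genuine gap in your treatment of part~(1). Your argument terminates with ``$g$ is a constant; this one-dimensional exceptional direction is then absorbed into the finite-rank correction~$\mathcal{O}$.'' But the lemma as stated asserts \emph{full} density of $\Delta C_c^\infty(M)$ in $L^2_\omega(M)$, which would require $g=0$, not merely $g=\text{const}$. For the weight $\omega(z)=(r\log r)^{-1}$ on each end one has $\omega\in L^2(M)$ (since $\int_{r_0}^\infty r^{-1}(\log r)^{-2}\,dr<\infty$), so a nonzero constant $g$ \emph{does} define a nontrivial bounded functional on $L^2_\omega$ via the unweighted pairing, and it annihilates $\Delta C_c^\infty$. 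Thus the literal statement of~(1) appears to fail by exactly the one-dimensional obstruction you identified. You cannot ``absorb this into $\mathcal{O}$'' at this stage, since $\mathcal{O}$ is defined only \emph{after} the present lemma, using the $\varphi_i,\psi_i$ produced in~(2); invoking it here is circular.

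What this likely means is that the paper has transcribed the statement of \cite[Lemma~3.3]{HNS} slightly imprecisely for the $\mathbb{R}^2\#\mathbb{R}^2$ setting, and the correct assertion is density modulo constants (or an equivalent formulation). For part~(2) your approximation of the cokernel basis by $\{\Delta\psi_i^{(n)}\}$ then still works provided the supplementary subspace can be chosen inside the codimension-one closure $\overline{\Delta C_c^\infty}$, which requires an additional check that $\operatorname{Range}(I+\tilde{E}_1(0))$ together with constants already spans $L^2_\omega$. You should flag this discrepancy rather than paper over it.
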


\begin{proof}
See \cite[Lemma~ 3.3, 3.4]{HNS}.
\end{proof}

Now, let $\{\varphi_i\}_{i=1}^{\mathcal{N}}$ and $\{\psi_i\}_{i=1}^{\mathcal{N}}$ be functions satisfying properties in Lemma~\ref{null}. Define finite rank operator
\begin{align*}
    \mathcal{O} = \sum_{i=1}^{\mathcal{N}} \psi_i \langle \varphi_i, \cdot \rangle,
\end{align*}
and 
\begin{align*}
    \tilde{E}(k) = \tilde{E}_1(k) + (\Delta+k^2)\mathcal{O}.
\end{align*}

It follows that

\begin{lemma}
There exists $0<k_0\le 1/2$ such that the operator $I+\tilde{E}(k)$ is invertible for all $0<k\le k_0$. 
\end{lemma}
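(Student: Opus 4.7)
\medskip

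\noindent\textbf{Proof proposal.}
The plan is to first establish invertibility at $k=0$ on the weighted space $L^2_\omega(M)$ and then propagate this to an interval $(0, k_0]$ by a perturbation/continuity argument. The invertibility at $k=0$ is precisely the reason $\mathcal{O}$ was built the way it was in Lemma~\ref{null}, so the construction should essentially do the work for us.

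For injectivity of $I + \tilde{E}(0) = I + \tilde{E}_1(0) + \Delta\mathcal{O}$, suppose $(I + \tilde{E}(0))f = 0$. By Lemma~\ref{null}(2), $\operatorname{span}\{\Delta\psi_j\}_{j=1}^{\mathcal{N}}$ is a direct-sum complement to $\operatorname{Range}(I + \tilde{E}_1(0))$, and since $\Delta\mathcal{O}f = \sum_j \bigl(\sum_i \langle \varphi_j, \varphi_i\rangle c_i\bigr)\Delta\psi_j$ lies in that complement while $(I+\tilde{E}_1(0))f$ lies in the range, both summands must vanish. The first equation $(I+\tilde{E}_1(0))f = 0$ puts $f$ in $\operatorname{span}\{\varphi_i\}$, so $f = \sum_i c_i \varphi_i$. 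Substituting into $\Delta\mathcal{O}f = 0$ and using linear independence of $\{\Delta\psi_j\}$ gives $G c = 0$, where $G_{ji} = \langle \varphi_j, \varphi_i\rangle$ is the (positive definite) Gram matrix of the linearly independent family $\{\varphi_i\}$. Hence $c=0$ and $f=0$. Surjectivity then follows from Fredholm theory: $\tilde{E}_1(0)$ is Hilbert--Schmidt on $L^2_\omega(M)$ by Lemma~\ref{A5} and $\Delta\mathcal{O}$ has finite rank, so $\tilde{E}(0)$ is compact; thus $I+\tilde{E}(0)$ is Fredholm of index zero, and injectivity upgrades to bijectivity, with a bounded inverse by the open mapping theorem.

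To pass from $k=0$ to $k\in(0,k_0]$, I would use norm continuity. Lemma~\ref{A5} gives $\tilde{E}_1(k)\to \tilde{E}_1(0)$ in the operator topology on $L^2_\omega(M)$ as $k\to 0$. The remaining piece $(\Delta+k^2)\mathcal{O} - \Delta\mathcal{O} = k^2 \mathcal{O}$ is a bounded finite-rank operator of norm $O(k^2)$, since $\varphi_i,\psi_i\in C_c^\infty(M)$ makes both the pairing and the action into $L^2_\omega(M)$ uniformly controlled. Therefore $\tilde{E}(k)\to \tilde{E}(0)$ in the operator norm, and since the set of invertible operators is open, there is $k_0\in(0,1/2]$ such that $I+\tilde{E}(k)$ is invertible for all $0<k\le k_0$.

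The main obstacle I anticipate is the bookkeeping around the weighted space $L^2_\omega(M)$: verifying that $\mathcal{O}$, together with its $\Delta+k^2$ perturbation, maps $L^2_\omega(M)$ boundedly into itself with estimates uniform in $k\le k_0$, and that the pairings $\langle \varphi_j,\cdot\rangle$ used to define $\mathcal{O}$ are compatible with the duality underlying the Fredholm alternative for $\tilde{E}_1(0)$ in this weighted setting. All of these are essentially formal consequences of the compact support and smoothness of $\varphi_i,\psi_i$, but they need to be checked to guarantee that the positive-definiteness argument for the Gram matrix and the norm-continuity of $\tilde{E}(k)$ both take place in the same space where $\tilde{E}_1(k)$ is Hilbert--Schmidt.
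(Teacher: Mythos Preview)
Your proposal is correct and follows essentially the same two-step approach as the paper: invertibility at $k=0$ followed by a continuity/perturbation argument via Lemma~\ref{A5}. The only difference is that the paper outsources the $k=0$ step to \cite[Lemma~3.5]{HNS}, whereas you spell out the Fredholm/Gram-matrix argument explicitly; your version is a faithful unpacking of that citation, and the concerns you flag about the weighted space are indeed purely formal given $\varphi_i,\psi_i\in C_c^\infty(M)$.
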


\begin{proof}
The invertibility of $I+\tilde{E}(0)$ is a consequence of \cite[Lemma~3.5]{HNS}, and the result follows by continuity; Lemma~\ref{A5}.
\end{proof}

Defining $S(k)$ by
\[
(I+\tilde{E}(k))^{-1} = I + S(k),
\]
we finally obtain the true resolvent for $0<k\le k_0$:
\begin{align}\label{parametrix_R2}
(\Delta+k^2)^{-1} 
&= \Bigg(\sum_{j=1}^3 G_j(k) + \mathcal{O}\Bigg)(I+\tilde{E}(k))^{-1} \\
&:= \sum_{j=1}^3 G_j(k) + G_4(k),
\end{align}
where
\[
G_4(k) = \sum_{j=1}^3 G_j(k)S(k) + \mathcal{O} + \mathcal{O}S(k).
\]

\bigskip

\noindent
{\bf Acknowledgements:} The authors would like to thank Pascal Aucher, Gilles Carron, Thierry Coulhon, Baptiste Devyver and Renjin Jiang  for helpful comments. AS was supported by Australian Research Council  Discovery Grant  DP200101065.

\end{document}